\newtheorem{theorem}{Theorem}[section]
\newtheorem{prop}[theorem]{Proposition}
\theoremstyle{definition}
\newtheorem{defn}[theorem]{Definition}
\newtheorem{lemma}[theorem]{Lemma}
\newtheorem{coro}[theorem]{Corollary}
\newtheorem{prop-def}{Proposition-Definition}[section]
\newtheorem{coro-def}{Corollary-Definition}[section]
\newtheorem{remark}[theorem]{Remark}
\newcommand{\nc}{\newcommand}
\nc{\tred}[1]{\textcolor{red}{#1}}
\nc{\tblue}[1]{\textcolor{blue}{#1}}
\nc{\tgreen}[1]{\textcolor{green}{#1}}
\nc{\tpurple}[1]{\textcolor{purple}{#1}}
\nc{\btred}[1]{\textcolor{red}{\bf #1}}
\nc{\btblue}[1]{\textcolor{blue}{\bf #1}}
\nc{\btgreen}[1]{\textcolor{green}{\bf #1}}
\nc{\btpurple}[1]{\textcolor{purple}{\bf #1}}
\newcommand{\efootnote}[1]{}
\renewcommand{\textbf}[1]{}
\newcommand{\delete}[1]{}
\nc{\mlabel}[1]{\label{#1}}  % Use this to suppress names
\nc{\mcite}[1]{\cite{#1}}  % Use this to suppress names
\nc{\mref}[1]{\ref{#1}}  % Use this to suppress names
\nc{\mbibitem}[1]{\bibitem{#1}} % Use this to show number name
\nc{\mlabel}[1]{\label{#1}  % Use the next two lines to show names
{\hfill \hspace{1cm}{\small\tt{{\ }\hfill(#1)}}}}
\nc{\mcite}[1]{\cite{#1}{\small{\tt{{\ }(#1)}}}}  % Use this lines to show names
\nc{\mref}[1]{\ref{#1}{{\tt{{\ }(#1)}}}}  % Use this lines to show names
\nc{\mbibitem}[1]{\bibitem[\bf #1]{#1}} % Use this to show name
\nc{\opa}{\ast} \nc{\opb}{\odot} \nc{\op}{\bullet} \nc{\pa}{\frakL}
\nc{\arr}{\rightarrow} \nc{\lu}[1]{(#1)} \nc{\mult}{\mrm{mult}}
\nc{\diff}{\mathfrak{Diff}}
\nc{\opc}{\sharp}\nc{\opd}{\natural}
\nc{\ope}{\circ}
\nc{\dpt}{\mathrm{d}}
\nc{\AW}{\mathcal{A}}
\nc{\ari}{\mathrm{ar}}
\nc{\lef}{\mathrm{lef}}
\nc{\Sh}{\mathrm{ST}}
\nc{\Cr}{\mathrm{Cr}}
\nc{\st}{{Schr\"oder tree}\xspace}
\nc{\sts}{{Schr\"oder trees}\xspace}
\nc{\vertset}{\Omega} % set of vertex decorations
\nc{\assop}{\quad \begin{picture}(5,5)(0,0)
\line(-1,1){10}
\put(-2.2,-2.2){$\bullet$}
\line(0,-1){10}\line(1,1){10}
\end{picture} \quad \smallskip}
\nc{\operator}{\begin{picture}(5,5)(0,0)
\line(0,-1){6}
\put(-2.6,-1.8){$\bullet$}
\line(0,1){9}
\end{picture}}
\nc{\idx}{\begin{picture}(6,6)(-3,-3)
\put(0,0){\line(0,1){6}}
\put(0,0){\line(0,-1){6}}
 \end{picture}}
\nc{\pb}{{\mathrm{pb}}}
\nc{\Lf}{{\mathrm{Lf}}}
\nc{\lft}{{left tree}\xspace}
\nc{\lfts}{{left trees}\xspace}
\nc{\fat}{{fundamental averaging tree}\xspace}
\nc{\fats}{{fundamental averaging trees}\xspace}
\nc{\avt}{\mathrm{Avt}}
\nc{\rass}{{\mathit{RAss}}}
\nc{\aass}{{\mathit{AAss}}}
\nc{\vin}{{\mathrm Vin}}    %decoration set of indices
\nc{\lin}{{\mathrm Lin}}    %decoration set of leaves
\nc{\inv}{\mathrm{I}n}
\nc{\gensp}{V} % space of generators
\nc{\genbas}{\mathcal{V}} % basis of the space of generators
\nc{\bvp}{V_P}     % Rota-Baxter generating space
\nc{\gop}{{\,\omega\,}}     % generic binary operation
\nc{\bin}[2]{ (_{\stackrel{\scs{#1}}{\scs{#2}}})}  %binomial coeff
\nc{\binc}[2]{ \left (\!\! \begin{array}{c} \scs{#1}\\
    \scs{#2} \end{array}\!\! \right )}  %binomial coeff
\nc{\bincc}[2]{  \left ( {\scs{#1} \atop
    \vspace{-1cm}\scs{#2}} \right )}  %binomial coeff
\nc{\bs}{\bar{S}} \nc{\cosum}{\sqsubset} \nc{\la}{\longrightarrow}
\nc{\rar}{\rightarrow} \nc{\dar}{\downarrow} \nc{\dprod}{**}
\nc{\dap}[1]{\downarrow \rlap{$\scriptstyle{#1}$}}
\nc{\md}{\mathrm{dth}} \nc{\uap}[1]{\uparrow
\rlap{$\scriptstyle{#1}$}} \nc{\defeq}{\stackrel{\rm def}{=}}
\nc{\disp}[1]{\displaystyle{#1}} \nc{\dotcup}{\
\displaystyle{\bigcup^\bullet}\ } \nc{\gzeta}{\bar{\zeta}}
\nc{\hcm}{\ \hat{,}\ } \nc{\hts}{\hat{\otimes}}
\nc{\barot}{{\otimes}} \nc{\free}[1]{\bar{#1}}
\nc{\uni}[1]{\tilde{#1}} \nc{\hcirc}{\hat{\circ}} \nc{\lleft}{[}
\nc{\lright}{]} \nc{\lc}{\lfloor} \nc{\rc}{\rfloor}
\nc{\curlyl}{\left \{ \begin{array}{c} {} \\ {} \end{array}
    \right .  \!\!\!\!\!\!\!}
\nc{\curlyr}{ \!\!\!\!\!\!\!
    \left . \begin{array}{c} {} \\ {} \end{array}
    \right \} }
\nc{\longmid}{\left | \begin{array}{c} {} \\ {} \end{array}
    \right . \!\!\!\!\!\!\!}
\nc{\onetree}{\bullet} \nc{\ora}[1]{\stackrel{#1}{\rar}}
\nc{\ola}[1]{\stackrel{#1}{\la}}%${\Bbb Z}$
\nc{\ot}{\otimes} \nc{\mot}{{{\boxtimes\,}}}
\nc{\otm}{\overline{\boxtimes}} \nc{\sprod}{\bullet}
\nc{\scs}[1]{\scriptstyle{#1}} \nc{\mrm}[1]{{\rm #1}}
\nc{\margin}[1]{\marginpar{\rm #1}}   %{\rm #1}}
\nc{\dirlim}{\displaystyle{\lim_{\longrightarrow}}\,}
\nc{\invlim}{\displaystyle{\lim_{\longleftarrow}}\,}
\nc{\mvp}{\vspace{0.3cm}} \nc{\tk}{^{(k)}} \nc{\tp}{^\prime}
\nc{\ttp}{^{\prime\prime}} \nc{\svp}{\vspace{2cm}}
\nc{\vp}{\vspace{8cm}} \nc{\proofbegin}{\noindent{\bf Proof: }}
\nc{\proofend}{$\blacksquare$ \vspace{0.3cm}}
\nc{\modg}[1]{\!<\!\!{#1}\!\!>}
\nc{\intg}[1]{F_C(#1)} \nc{\lmodg}{\!
<\!\!} \nc{\rmodg}{\!\!>\!}
\nc{\cpi}{\widehat{\Pi}}
\nc{\sha}{{\mbox{\cyr X}}}  %used to be \cyr
\nc{\shap}{{\mbox{\cyrs X}}} %sha as product
\nc{\shpr}{\diamond}    %Shuffle product
\nc{\shp}{\ast} \nc{\shplus}{\shpr^+}
\nc{\shprc}{\shpr_c}    %Cartier's product
\nc{\msh}{\ast} \nc{\zprod}{m_0} \nc{\oprod}{m_1}
\nc{\vep}{\varepsilon} \nc{\labs}{\mid\!} \nc{\rabs}{\!\mid}
\nc{\mmbox}[1]{\mbox{\ #1\ }} \nc{\fp}{\mrm{FP}}
\nc{\rchar}{\mrm{char}} \nc{\End}{\mrm{End}} \nc{\Fil}{\mrm{Fil}}
\nc{\Mor}{Mor\xspace} \nc{\gmzvs}{gMZV\xspace}
\nc{\gmzv}{gMZV\xspace} \nc{\mzv}{MZV\xspace}
\nc{\mzvs}{MZVs\xspace} \nc{\Hom}{\mrm{Hom}} \nc{\id}{\mrm{id}}
\nc{\im}{\mrm{im}} \nc{\incl}{\mrm{incl}} \nc{\map}{\mrm{Map}}
\nc{\mchar}{\rm char} \nc{\nz}{\rm NZ} \nc{\supp}{\mathrm Supp}
\nc{\Alg}{\mathbf{Alg}} \nc{\Bax}{\mathbf{Bax}} \nc{\bff}{\mathbf f}
\nc{\bfk}{{\bf k}} \nc{\bfone}{{\bf 1}} \nc{\bfx}{\mathbf x}
\nc{\bfy}{\mathbf y}
\nc{\base}[1]{\bfone^{\otimes ({#1}+1)}} %{{a_{#1}}}
\nc{\Cat}{\mathbf{Cat}}
\nc{\detail}{\marginpar{\bf More detail}
    \noindent{\bf Need more detail!}
    \svp}
\nc{\Int}{\mathbf{Int}} \nc{\Mon}{\mathbf{Mon}}
\nc{\rbtm}{{shuffle }} \nc{\rbto}{{Rota-Baxter }}
\nc{\remarks}{\noindent{\bf Remarks: }} \nc{\Rings}{\mathbf{Rings}}
\nc{\Sets}{\mathbf{Sets}} \nc{\wtot}{\widetilde{\odot}}
\nc{\wast}{\widetilde{\ast}} \nc{\bodot}{\bar{\odot}}
\nc{\bast}{\bar{\ast}} \nc{\hodot}[1]{\odot^{#1}}
\nc{\hast}[1]{\ast^{#1}} \nc{\mal}{\mathcal{O}}
\nc{\tet}{\tilde{\ast}} \nc{\teot}{\tilde{\odot}}
\nc{\oex}{\overline{x}} \nc{\oey}{\overline{y}}
\nc{\oez}{\overline{z}} \nc{\oef}{\overline{f}}
\nc{\oea}{\overline{a}} \nc{\oeb}{\overline{b}}
\nc{\weast}[1]{\widetilde{\ast}^{#1}}
\nc{\weodot}[1]{\widetilde{\odot}^{#1}} \nc{\hstar}[1]{\star^{#1}}
\nc{\lae}{\langle} \nc{\rae}{\rangle}
\nc{\lf}{\lfloor}\nc{\rf}{\rfloor}
\nc{\QQ}{{\mathbb Q}}
\nc{\RR}{{\mathbb R}} \nc{\ZZ}{{\mathbb Z}}
\nc{\cala}{{\mathcal A}} \nc{\calb}{{\mathcal B}}
\nc{\calc}{{\mathcal C}}
\nc{\cald}{{\mathcal D}} \nc{\cale}{{\mathcal E}}
\nc{\calf}{{\mathcal F}} \nc{\calg}{{\mathcal G}}
\nc{\calh}{{\mathcal H}} \nc{\cali}{{\mathcal I}}
\nc{\call}{{\mathcal L}} \nc{\calm}{{\mathcal M}}
\nc{\caln}{{\mathcal N}} \nc{\calo}{{\mathcal O}}
\nc{\calp}{{\mathcal P}} \nc{\calr}{{\mathcal R}}
\nc{\cals}{{\mathcal S}} \nc{\calt}{{\mathcal T}}
\nc{\calu}{{\mathcal U}} \nc{\calw}{{\mathcal W}} \nc{\calk}{{\mathcal K}}
\nc{\calx}{{\mathcal X}} \nc{\CA}{\mathcal{A}}
\nc{\fraka}{{\mathfrak a}} \nc{\frakA}{{\mathfrak A}}
\nc{\frakb}{{\mathfrak b}} \nc{\frakB}{{\mathfrak B}}
\nc{\frakD}{{\mathfrak D}} \nc{\frakg}{{\mathfrak g}}
\nc{\frakH}{{\mathfrak H}} \nc{\frakL}{{\mathfrak L}}
\nc{\frakM}{{\mathfrak M}} \nc{\bfrakM}{\overline{\frakM}}
\nc{\frakm}{{\mathfrak m}} \nc{\frakP}{{\mathfrak P}}
\nc{\frakN}{{\mathfrak N}} \nc{\frakp}{{\mathfrak p}}
\nc{\frakS}{{\mathfrak S}}
\nc{\BS}{\mathbb{S
}}
\font\cyr=wncyr10 \font\cyrs=wncyr7
\nc{\li}[1]{\textcolor{red}{Li:#1}}
\nc{\jun}[1]{\textcolor{blue}{Jun: #1}}
\begin{document}

\title{Averaging algebras, Schr\"oder numbers, rooted trees and operads}
%
%=========================================================================
\author{Jun Pei}
\address{School of Mathematics and Statistics, Southwest University, Chongqing 400715, China}
         \email{peitsun@163.com}

\author{Li Guo}
\address{Department of Mathematics and Computer Science,
         Rutgers University,
         Newark, NJ 07102}
\email{liguo@rutgers.edu}

%========================================================================
\date{\today}
%========================================================================
\begin{abstract}
In this paper, we study averaging operators from an algebraic and combinatorial point of view. We first construct free averaging algebras in terms of a class of bracketed words called averaging words. We next apply this construction to obtain generating functions in one or two variables for subsets of averaging words when the averaging operator is taken to be idempotent. When the averaging algebra has an idempotent generator, the generating function in one variable is twice the generating function for large Schr\"oder numbers, leading us to give interpretations of large Schr\"oder numbers in terms of bracketed words and rooted trees, as well as a recursive formula for these numbers. We also give a representation of free averaging algebras by unreduced trees and apply it to give a combinatorial description of the operad of averaging algebras.
\end{abstract}

\maketitle

\tableofcontents

\setcounter{section}{0}

%========================================================================
\section{Introduction}
Let $\bfk$ be a unitary commutative ring. An averaging operator on a commutative $\bfk$-algebra $R$ is a linear operator $P$ satisfying the identity
\begin{equation}\label{eq:aveqc}
P(f P(g)) = P(f)P(g) \text{ for all } f,g \in R.
\end{equation}
This operator was already implicitly studied by O. Reynolds~\cite{Re} in 1895 in turbulence theory under the disguise of a Reynolds operator which is defined by
\begin{equation}
P(f g) = P(f)P(g)+ P[(f-P(f))( g-P(g) )] \text{ for all } f,g \in R,
\mlabel{eq:rey}
\end{equation}
since an idempotent Reynolds operator is an averaging operator. An important class of such operators used in turbulence theory is the class of averages over one portion of space time of certain vector fields.
For example, the time average of a real valued function $f$ defined on time-space
\begin{equation}\notag %\label{eq:avint}
f(x, t) \mapsto \bar{f}(x, t) = \lim_{T \rightarrow \infty} \frac{1}{2T}\int_{-T}^{T} f(x, t+\tau ) d \tau,
\end{equation}
is such an operator.

In the 1930s, the notion of averaging operator was explicitly defined by Kolmogoroff and Kamp\'{e} de F\'{e}riet~\cite{KF,Mil}.  Then G. Birkhoff \cite{Bi} continued its study and showed that a positive bounded projection in the Banach algebra $C(X)$, the algebra of scalar valued continuous functions on a compact Hausdorff space $X$, onto a fixed range space is an idempotent averaging operator. In 1954, S.~T.~C. Moy~\cite{STC} made the connection between averaging operators and conditional expectation. Furthermore, she studied the relationship between integration theory and averaging operators in turbulence theory and probability. Then her results were extended by G.~C. Rota \cite{R1}. During the same period, the idempotent averaging operators on $C_{\infty}(X)$, the algebra of all real valued continuous functions on a locally compact Hausdorff space $X$ that vanish at the infinity, were characterized by J.~L. Kelley \cite{Kel}.

Later on, more discoveries of averaging operators on various spaces were made. B. Brainerd \mcite{Bbr} considered the conditions under which an averaging operator can be represented as an integration on the abstract analogue of the ring of real valued measurable functions. In 1964, G.~C. Rota~\cite{R2} proved that a continuous Reynolds operator on the algebra $L_{\infty}(S,\Sigma,m)$ of bounded measurable functions on a measure space
$(S,\Sigma,m)$ is an averaging operator if and only if it has a closed range. J.~L.~B. Gamlen and J.~B. Miller \cite{Mil,GM} considered averaging operators on {\it noncommutative} Banach algebras where the averaging identities are defined by Eq.~(\ref{eq:avel}).
They discussed spectrum and resolvent sets of averaging operators on Banach algebras. N.~H. Bong~\mcite{Bo} found some connections between the resolvent of a Rota-Baxter operator~\mcite{Ba,Gub,Ro} and that of an averaging operator on complex Banach algebras. In 1986, Huijsmans generalized the work of Kelley to the case of $f$-algebras. Triki \cite{Tr1,Tr2} showed that a positive contractive projection on an Archimedean $f$-algebra is an idempotent averaging operator.

In the last century, most studies on averaging operators had been done for various special algebras, such as function spaces, Banach algebras, and the topics and methods had been largely analytic. In his Ph.~D. thesis in 2000~\cite{Cao}, W. Cao studied averaging operators in the general context and from an algebraic point of view. He gave the explicit construction of free commutative averaging algebras and studied the naturally induced Lie algebra structures from averaging operators.

In this century, while averaging operators continued to find many applications in its traditional areas of analysis and applied areas~\mcite{Fe}, their algebraic study has been deepened and generalized.
J.~L. Loday~\cite{Lo} defined the diassociative algebra as the enveloping algebra of the Leibniz algebra by analogy with the associative algebra as the enveloping algebra of the Lie algebra. M. Aguiar \cite{Ag} showed that a diassociative algebra can be derived from an averaging associative algebra by defining two new operations $x \dashv y := x P(y)$ and $x \vdash y: = P(x)y$. An analogue process gives a Leibniz algebra from an averaging Lie algebra by defining a new operation $\{x,y\} := [P(x),y]$ and derives a (left) permutative algebra from an averaging commutative associative algebra. In general, an averaging operator was defined on any binary operad and this kind of process was systematically studied in~\cite{PBGN} by relating the averaging actions to a special construction of binary operads called duplicators~\cite{GK2,PBGN2}.
Combining the averaging operators actions with the Rota-Baxter operators \cite{BBGN,PBGN2} actions, we obtained another connection between Rota-Baxter operators and averaging operators: the resulting algebraic structures given by the actions of the two operators are Koszul dual to each other.

These diverse applications and connections of the averaging algebra motivate us to carry out a further algebraic and combinatorial study of the averaging algebra in this paper. It is well known that in the category of any given algebraic structure, the free objects play a central role in studying the other objects. Further the combinatorial nature of the algebraic structure is often revealed by its free objects (see~\mcite{Re} for the Lie algebra case). Thus our first step is to construct free averaging algebras, after presenting some preliminary properties and examples of averaging algebras. This is carried out in Section~\mref{sec:freeav}, where the free averaging algebra on a set is realized on the free module on a set of bracketed words composed from the set, called averaging words. In Section \ref{sec:enu}, we begin our combinatorial investigation by enumerating subsets of averaging words for the free averaging algebra on one generator and when the operator is taken to be idempotent. The generating function from the enumeration of averaging words turns out to be twice the generating function of the large Schr\"oder numbers, revealing the combinatorial nature of averaging algebras. Pursuing this numerical connection of averaging algebra with large Schr\"oder numbers further allows us to find two applications of averaging algebras to large Schr\"oder numbers. We obtained two interpretations of large Schr\"oder numbers, one in terms of averaging words, another in terms of decorated rooted trees. We also obtain a recursive formula for large Schr\"oder numbers from such a formula arising from the study of averaging words. In Section~\mref{sec:tree}, we identify the set of averaging words with a special class of unreduced planar trees and applied it to give a combinatorial description of the operad of averaging algebras.

\section{Properties and free objects of averaging algebras}
\label{sec:freeav}
{\bf Convention. } Throughout this paper, all algebras are taken to be nonunitary unless otherwise specified.

In this section, we first give some properties and examples of averaging operators. We then give an explicit construction of the free averaging algebra on a non-empty set $X$.
\subsection{Definitions and properties}
\mlabel{ss:pre}
An averaging operator in the noncommutative context is given as follows.
\begin{defn}
A linear operator $P$ on a $\bfk$-algebra $R$ is called an {\bf averaging operator} if
\begin{equation}
P(x)P(y)=P(xP(y))=P(P(x)y) \text{ for all } x, y \in R.
\mlabel{eq:avel}
\end{equation}
A $\bfk$-algebra $R$ together with an averaging operator $P$ on $R$ is called an {\bf averaging algebra}.
\mlabel{de:av}
\end{defn}

It is well-known and easily checked that an idempotent operator is an averaging operator if and only if it is a Reynolds operator defined in Eq.~(\mref{eq:rey}).
There is also a close relationship between averaging operators and Rota-Baxter operators (of weight zero) on an algebra $R$. The latter operator is defined by the operator equation
$$ P(x)P(y)=P(P(x)y) + P(xP(y)) \text{ for all } x, y\in R$$
and has played important role in mathematics and physics~\mcite{Ba,Gub,Ro}.

Note that an averaging operator is a set operator in the sense that, for any semigroup $(G,\cdot)$, it makes sense to define an averaging operator on $G$ to be a map $P:G\to G$ such that
$$ P(x)\cdot P(y)=P(x\cdot P(y))=P(P(x)\cdot y) \text{ for all } x, y \in G.$$
Now let $P:\QQ[x]\to \QQ[x]$ be a linear operator such that, for each $n\geq 0$, we have $P(x^n)=\beta(n)x^{\theta(n)}$ with $\beta(n)\in \QQ$ and $\theta(n)>0$. It is shown in~\mcite{GRZ} $P$ that $P$ is a Rota-Baxter operator of weight zero
if and only if $\theta$ is an averaging operator on the additive semigroup $\ZZ_{\geq 1}$:
$$ \theta(m)+\theta(n)=\theta(m+\theta(n))=\theta(\theta(m)+n) \text{ for all } m, n\geq 0.$$

In addition to the examples of averaging algebras mentioned in the introduction, we display the following classes of examples.
We first give some examples from averaging processes.
Note that if $P$ is an averaging operator, then $c P$ is also an averaging operator for any $c\in \bfk$.

\begin{prop}
Let $R$ be a $\bfk$-algebra.
\begin{enumerate}
\item
Let $G$ be a finite group that acts on $R$ (on the right) and preserves the multiplication of $R$: $(xy)^g=x^g y^g$ for all $x, y\in R$ and $g\in G$. Then the linear operator
\begin{equation}
P: R\to R, \quad x \mapsto \sum_{g\in G} x^g \text{ for all }x\in R,
\mlabel{eq:avr}
\end{equation}
is an averaging operator.
\mlabel{it:avra}
\item
Let $a$ be a fixed element in the center of $R$. Define $P_{a}(x): =ax$ for all $x\in R$. Then $P_{a}$ is an averaging operator on $R$.
\mlabel{it:avrb}
\end{enumerate}
\mlabel{pp:avr}
\end{prop}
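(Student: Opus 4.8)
The plan is to verify the averaging identity \eqref{eq:avel} directly in each of the two cases, since both are simple enough that the defining equation can be checked by a short computation.

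For part \eqref{it:avra}, I would write out the three expressions $P(x)P(y)$, $P(xP(y))$, and $P(P(x)y)$ using the formula \eqref{eq:avr} and the hypothesis that $G$ acts on the right by algebra endomorphisms. We have $P(x)P(y) = \left(\sum_{g\in G} x^g\right)\left(\sum_{h\in G} y^h\right) = \sum_{g,h\in G} x^g y^h$. On the other hand, $P(xP(y)) = P\!\left(x\sum_{h\in G} y^h\right) = \sum_{g\in G}\left(x\sum_{h\in G}y^h\right)^g = \sum_{g,h\in G} x^g (y^h)^g = \sum_{g,h\in G} x^g y^{hg}$, using that each $g$ acts as an algebra endomorphism (so it distributes over sums and products). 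Now the key point is that for fixed $g$, as $h$ ranges over $G$ the product $hg$ also ranges over all of $G$ (right multiplication by $g$ is a bijection of $G$); hence $\sum_{g,h\in G} x^g y^{hg} = \sum_{g,h'\in G} x^g y^{h'} = P(x)P(y)$. The computation for $P(P(x)y)$ is symmetric: $P(P(x)y) = \sum_{g,h\in G}(x^h)^g y^g = \sum_{g,h\in G} x^{hg} y^g$, and reindexing $hg\mapsto h'$ again gives $\sum_{g,h'}x^{h'}y^g = P(x)P(y)$. This finishes \eqref{it:avra}.

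For part \eqref{it:avrb}, with $a$ central and $P_a(x)=ax$, I would simply compute: $P_a(x)P_a(y) = (ax)(ay) = a^2 xy$ (using centrality of $a$ to move the second $a$ past $x$), while $P_a(xP_a(y)) = a(x\cdot ay) = a(axy) = a^2 xy$ and $P_a(P_a(x)y) = a(ax\cdot y) = a^2 xy$. So all three agree and $P_a$ is an averaging operator. I should note that centrality of $a$ is exactly what is needed here to rearrange the factors; without it the three expressions would differ. (Linearity of $P_a$ is immediate.)

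I do not anticipate any real obstacle; the only subtlety worth flagging is the reindexing step in \eqref{it:avra}, where one uses that $G$ is a group (so $h\mapsto hg$ is a bijection) — the statement would fail for a mere monoid acting on $R$ — and the fact that the action is by algebra endomorphisms, which is what lets $P$'s single group element $g$ be pulled out of a product and a sum. These are precisely the hypotheses given, so the proof is a direct verification.
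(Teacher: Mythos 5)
Your proof is correct and follows essentially the same route as the paper: a direct verification of Eq.~(\ref{eq:avel}), using the reindexing $h\mapsto hg$ (a bijection of $G$) for part (\ref{it:avra}) and centrality of $a$ for part (\ref{it:avrb}). The extra remarks about where the group and endomorphism hypotheses enter are accurate but not needed beyond what the paper's computation already shows.
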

\begin{proof}
(\mref{it:avra}) For any $x, y\in R$, we have
$$ P(xP(y))=\sum_{h\in G} \left (x\sum_{g\in G} y^g\right)^h =\sum_{h\in G} x^h \sum_{g\in G} y^{gh} =\sum_{h\in G}x^h \sum_{g\in G}y^g=P(x)P(y).$$
We similarly have $P(P(x)y)=P(x)P(y)$.
\smallskip

\noindent
(\mref{it:avrb}) For any $x, y\in R$, we have
$$ P(xP(y))=a(x (ay))=P(x)P(y), \quad
P(P(x)y)=a((ax)y)=(ax)(ay)=P(x)P(y).$$
Thus $P$ is an averaging operator.
\end{proof}

As an application of Proposition~\mref{pp:avr}.(\mref{it:avra}), consider $F([a,b),\bfk)$, the $\bfk$-algebra of $\bfk$-valued functions on the interval $[a,b), a<b$. For a fixed positive integer $n$, define
$$P: F([a,b),\bfk)\to F([a,b),\bfk),\quad  f(x)\mapsto \sum_i f\left(x+\frac{i}{b-a}\right), $$
where the sum is over $i\in \ZZ$ such that $x+\frac{i}{b-a}$ is in $[a,b)$. Then $P$ is an averaging operator on $F([a,b),\RR)$. Here we take $G$ to be the cyclic group $\ZZ/n\ZZ$ acting on $[a,b)$ by permuting the $n$ subintervals $[a+\frac{i}{b-a}, a+\frac{i+1}{b-a}), 0\leq i\leq n-1$. This action induces an action of $\ZZ/n\ZZ$ on $F([a,b),\bfk)$ and hence Proposition~\mref{pp:avr}.(\mref{it:avra}) applies.
When $P$ is replaced by $\displaystyle{\frac{1}{n}P}$ which makes sense whenever $\bfk$ contains $\QQ$, then we obtain the usual averaging operator.

As a special case of Proposition~\mref{pp:avr}.(\mref{it:avrb}), let $G$ be a finite group and let $\bfk[G]$ be the group algebra. Then the $\bfk$-linear operator
$$ P: \bfk[G]\to \bfk[G], \quad g\mapsto \sum_{h\in G} hg = \left (\sum_{h\in G} h\right) g, \text{ for all }g\in G,$$
is an averaging operator since $\sum\limits_{h\in G}h$ is in the center of $\bfk[G]$.

There are many averaging operators that do not come from an averaging process.
A derivation on a $\bfk$-algebra $R$ is a linear operator $d:R\longrightarrow R$ such that
    $$ d(xy)=d(x)y+xd(y) \quad \text{ for all } x, y\in R.$$
It is immediately checked that a differential operator derivation $d$ with  $d^{2} =0$ is an averaging operator.

Birkhoff showed that an averaging operator on a unitary $\bfk$-algebra that preserves the identity $1_R$ must be idempotent: $P^2(x)=P(1_R\,P(x))=P(1_R)P(x)=P(x)$ for all $x\in R$~\mcite{R1}.
We next determine the conditions for an idempotent linear operator to be an averaging operator. Recall that there is a bijection
$$ \{\text{idempotent linear operators on } R\}\leftrightarrow
\{\text{linear decompositions }R=R_0\oplus R_1\}$$
such that $R_0=\im P$ and $R_1=\ker P$. The linear map $P$ corresponding to $R=R_0\oplus R_1$ is called the {\bf projection onto $R_0$ along $R_1$}.

\begin{prop}
Let $R$ be a $\bfk$-algebra and let $P:R\to R$ be an idempotent linear map. Let $R=R_0\oplus R_1$ be the corresponding linear decomposition. Then $P$ is an averaging operator if and only if
\begin{equation}
R_0R_0\subseteq R_0, \quad R_0R_1\subseteq R_1, \quad R_1R_0\subseteq R_1.
\mlabel{eq:grad}
\end{equation}
\mlabel{pp:super}
\end{prop}
\begin{proof}
For any $x, y\in R$, denote $x=x_0+x_1$ and $y=y_0+y_1$ with $x_i, y_i\in R_i, i=0, 1$.

Suppose $P$ is an averaging operator. Then from $P(R)=R_0$ and $P(x)P(y)=P(xP(y))$ we obtain $R_0R_0\subseteq R_0$.
Then we have
\begin{eqnarray*}
P(x)P(y)&=&x_0y_0, \\ P(P(x)y)&=&P(x_0y_0+x_0y_1)=P(x_0y_0)+P(x_0y_1)=x_0y_0+P(x_0y_1), \\
P(xP(y))&=&P(x_0y_0+x_1y_0)=P(x_0y_0)+P(x_1y_0)=x_0y_0+P(x_1y_0).
\end{eqnarray*}
Thus from Eq.~(\mref{eq:avel}) we obtain  $P(x_0y_1)=P(x_1y_0)=0$ for all $x_i,y_i\in R_i, i=0, 1$. Therefore Eq.~(\mref{eq:grad}) holds since $R_1=\ker P$ by the definition of $P$.

Conversely, suppose Eq.~(\mref{eq:grad}) holds. Then we have
$$ P(P(x)y)=P(x_0y_0+x_0y_1)=P(x_0y_0)+P(x_0y_1)=x_0y_0=P(x)P(y)$$
and similarly
$P(xP(y))=P(x)P(y)$ for all $x, y\in R$. Thus $P$ is an averaging operator.
\end{proof}

Recall that a $\bfk$-superalgebra is a $\bfk$-algebra $R$ with a $\bfk$-module decomposition $R = R_{0} \oplus R_{1}$ such that $R_{i}R_{j} \subseteq R_{i+j}$ where the subscripts are taken modulo 2.
\begin{coro}
\begin{enumerate}
\item
An idempotent algebra endomorphism $P:R\to R$ is an averaging operator. In particular, when $R$ is an augmented $\bfk$-algebra with the augmentation map $\varepsilon:R\to \bfk$, then $\varepsilon$ is an averaging operator regarded as a linear operator on $R$.
\mlabel{it:idem}
\item
Let $R = R_{0} \oplus R_{1}$ be a $\bfk$-superalgebra. Then the projection $P$ of $R$ to $R_0$ along $R_1$ is an averaging operator on $A$.
\mlabel{it:super}
\end{enumerate}
\end{coro}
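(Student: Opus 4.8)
The plan is to deduce both statements from Proposition~\mref{pp:super}, with part~(\mref{it:idem}) also admitting an even shorter direct verification.

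For part~(\mref{it:idem}): given an idempotent algebra endomorphism $P$, I would simply expand the two sides of Eq.~(\mref{eq:avel}) using multiplicativity of $P$ together with $P^2=P$; concretely $P(xP(y))=P(x)P(P(y))=P(x)P(y)$ and symmetrically $P(P(x)y)=P(x)P(y)$, so $P$ is averaging with no further work. One could instead observe that $R_0:=\im P$ and $R_1:=\ker P$ satisfy Eq.~(\mref{eq:grad})---since $\im P$ is a subalgebra and multiplying an element of $\im P$ by one of $\ker P$ lands in $\ker P$ by multiplicativity---and then quote Proposition~\mref{pp:super}, but the direct computation is shorter. For the augmentation claim, the one point needing care is interpreting $\varepsilon$ as an operator on $R$: since $R$ is augmented it decomposes as $R=\bfk\,1_R\oplus\ker\varepsilon$ with $\bfk\,1_R$ a subalgebra, and the composite of $\varepsilon$ with the unit $\bfk\to R$ then \emph{is} the idempotent algebra endomorphism projecting $R$ onto $\bfk\,1_R$ along the augmentation ideal, so the first part applies.

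For part~(\mref{it:super}): by the defining property $R_iR_j\subseteq R_{i+j}$ of a $\bfk$-superalgebra (indices modulo $2$), the decomposition $R=R_0\oplus R_1$ satisfies $R_0R_0\subseteq R_0$, $R_0R_1\subseteq R_1$ and $R_1R_0\subseteq R_1$, which is exactly Eq.~(\mref{eq:grad}); hence the projection $P$ of $R$ onto $R_0$ along $R_1$ is an averaging operator by Proposition~\mref{pp:super}. Note the relation $R_1R_1\subseteq R_0$ is not needed. I do not anticipate any real obstacle: the substance is entirely in Proposition~\mref{pp:super}, already proved, and the only genuinely substantive remark is the reformulation of an ``augmented algebra'' as the data of such a splitting, which makes $\varepsilon$ into a bona fide operator on $R$ compatible with the paper's nonunitary convention.
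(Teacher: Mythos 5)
Your proposal is correct. For part~(\mref{it:super}) it coincides with the paper's proof: the superalgebra grading gives exactly Eq.~(\mref{eq:grad}) (with $R_1R_1\subseteq R_0$ unused), and Proposition~\mref{pp:super} finishes. For part~(\mref{it:idem}) the paper takes your \emph{alternative} route — set $R_0=\im P$, $R_1=\ker P$, note that $\ker P$ is an ideal so Eq.~(\mref{eq:grad}) holds, and invoke Proposition~\mref{pp:super} — whereas your primary argument is the even shorter direct expansion $P(xP(y))=P(x)P(P(y))=P(x)P(y)$ (and symmetrically), which uses only multiplicativity and $P^2=P$ and is equally valid; the two arguments are interchangeable here, the direct one bypassing the proposition, the paper's one keeping everything under the umbrella of the idempotent-operator criterion. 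Your treatment of the augmentation claim, realizing $\varepsilon$ as an operator on $R$ by composing with the unit so that it becomes the idempotent algebra endomorphism projecting onto $\bfk\,1_R$ along $\ker\varepsilon$, spells out a point the paper leaves implicit and is the right way to make the statement precise.
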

\begin{proof}
(\mref{it:idem}) Let $R_0:=\im\, P$ and $R_1:=\ker P$. Then we have $R=R_0\oplus R_1$ and $P$ is the projection to $R_0$ along $R_1$. Since $R_1$ is an ideal of $R$, Eq.~(\mref{eq:grad}) holds. Hence $P$ is an averaging operator.
\smallskip

\noindent
(\mref{it:super}) This follows since $R_0$ and $R_1$ satisfies Eq.~(\mref{eq:grad}).
\end{proof}

\subsection{The construction of free averaging algebras}
Free commutative averaging algebras were constructed in~\cite{Cao}.

\begin{prop} $($\cite[Theorem~2.6]{Cao}$)$ Let $A$ be a unitary commutative $\bfk$-algebra and let $\mathbf{Sy}(A)$ denote the symmetric algebra on $A$. On the tensor product algebra $\mathfrak{A}:=\mathfrak{A}(A): = A \otimes \mathbf{Sy}(A)$, define the linear operator
    $$P: \mathfrak{A} \longrightarrow \mathfrak{A}, \quad
    P\left (\sum_{i} a_{i} \otimes s_{i}\right):= \sum_{i}1 \otimes (a_{i}s_{i}) \quad \text{ for all } a_i\in A,  s_i\in \mathbf{Sy}(A),
    $$
where $a_is_i$ is the product in $\mathbf{Sy}(A)$. Then $P$ is the free commutative averaging algebra on $A$. When $A$ is taken to be the polynomial algebra $\bfk[X]$ on a set $X$, then $\mathfrak{A}(\bfk[X])$ is the free commutative averaging algebra on $X$.
\mlabel{pp:freecom}
\end{prop}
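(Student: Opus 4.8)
The plan is to verify directly that $(\frakA,P)$, with the structure map $j\colon A\to\frakA$, $a\mapsto a\otimes 1$, satisfies the universal property of the free (unitary) commutative averaging algebra on the commutative algebra $A$; the statement about $\bfk[X]$ will then follow by composing adjunctions. First I would set up notation and check that $P$ is an averaging operator. Writing a general element of $\frakA=A\otimes\mathbf{Sy}(A)$ as a sum of terms $a\otimes(a_1\cdots a_n)$ with the product taken in $\mathbf{Sy}(A)$, note that $j$ is a homomorphism of unitary algebras, that $P(a\otimes 1)=1\otimes a$, and that $a\otimes(a_1\cdots a_n)=(a\otimes 1)(1\otimes a_1)\cdots(1\otimes a_n)$; hence $\frakA$ is generated as a $\bfk$-algebra by $j(A)\cup P(j(A))$. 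Well-definedness of $P$ is clear since the map $A\otimes\mathbf{Sy}(A)\to\mathbf{Sy}(A)$ raising degree by one is $\bfk$-bilinear, and for $x=a\otimes s$, $y=b\otimes t$ a short computation gives $P(x)P(y)=P(xP(y))=P(P(x)y)=1\otimes(abst)$, so $P$ satisfies Eq.~(\mref{eq:avel}).

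Next, for the universal property, let $(R,Q)$ be a commutative averaging algebra and $f\colon A\to R$ an algebra homomorphism. I would define $\tilde f\colon\frakA\to R$ by
\[
\tilde f\bigl(a\otimes(a_1\cdots a_n)\bigr)=f(a)\,Q(f(a_1))\cdots Q(f(a_n)).
\]
This is well-defined because the right-hand side is $\bfk$-multilinear in $a,a_1,\dots,a_n$ and, using commutativity of $R$, symmetric in $a_1,\dots,a_n$, so it factors through $A\otimes\mathbf{Sy}(A)$; it is a unital algebra homomorphism with $\tilde f\circ j=f$ directly from the formula. The substantive point is the relation $\tilde f\circ P=Q\circ\tilde f$: unwinding the definitions, this amounts to
\[
Q(f(a))\,Q(f(a_1))\cdots Q(f(a_n))=Q\bigl(f(a)\,Q(f(a_1))\cdots Q(f(a_n))\bigr),
\]
which I would prove by induction on $n$, using the averaging relation $Q(u)Q(v)=Q(uQ(v))$ and commutativity of $R$. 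Uniqueness of $\tilde f$ is then forced: any averaging-algebra homomorphism extending $f$ must send $1\otimes a=P(a\otimes1)$ to $Q(f(a))$, hence agrees with $\tilde f$ on the generating set $j(A)\cup P(j(A))$, so it equals $\tilde f$.

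Finally, for the last assertion I would use that $\bfk[X]$ is the free unitary commutative $\bfk$-algebra on the set $X$: for every commutative averaging algebra $(R,Q)$ there are then natural bijections $\Hom\bigl(\frakA(\bfk[X]),(R,Q)\bigr)\cong\Hom(\bfk[X],R)\cong\map(X,R)$, exhibiting $\frakA(\bfk[X])$ as the free commutative averaging algebra on $X$. I expect the only places demanding genuine care to be the iterated application of the averaging identity in verifying $\tilde f\circ P=Q\circ\tilde f$ and the check that $\tilde f$ respects the symmetry of $\mathbf{Sy}(A)$; everything else is formal bookkeeping.
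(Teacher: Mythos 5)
Your proposal is correct. Note, however, that the paper itself gives no proof of this proposition: it is quoted verbatim from Cao's thesis \cite[Theorem~2.6]{Cao}, so there is no internal argument to compare yours against; what you have written is a self-contained proof of the cited result. Your route is the expected one, a direct verification of the universal property, and all the essential points are in place: well-definedness of $P$ and of $\tilde f$ (the latter via multilinearity and the symmetry in $a_1,\dots,a_n$ coming from commutativity of $R$, so that the map factors through $A\otimes\mathbf{Sy}(A)$), the check that $P$ is averaging on the generators $a\otimes s$, the generation of $\frakA$ by $j(A)\cup P(j(A))$ which forces uniqueness, and the key identity $Q(f(a))Q(f(a_1))\cdots Q(f(a_n))=Q\bigl(f(a)Q(f(a_1))\cdots Q(f(a_n))\bigr)$, whose induction on $n$ does go through exactly as you indicate, since the averaging relation $Q(u)Q(v)=Q(uQ(v))$ applies with $u$ replaced by $f(a)Q(f(a_1))\cdots Q(f(a_{n-1}))$. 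The reduction of the set-generated case to the algebra-generated case by composing the two adjunctions (free commutative algebra on $X$, then free averaging algebra on $A=\bfk[X]$) is also standard and correct. The only caveat worth making explicit is that here everything is in the unitary commutative category (unlike the paper's global nonunitary convention), so $f$ and $\tilde f$ should be required to be unital, which your formula indeed ensures via $\tilde f(1\otimes 1)=f(1)=1_R$.
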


We now construct free (noncommutative) averaging algebras. We carry out the construction in this subsection (Theorem~\mref{thm:free}) and provide the proof of the theorem in the next subsection.

\subsubsection{A basis of the free averaging algebra}
Recall~\mcite{Gop} that an operated semigroup (or a semigroup with an operator) is a
semigroup $U$ together with an operator $\alpha: U \rightarrow U$. Let $X$ be a given nonempty set. We will first obtain a linear basis of the free averaging algebra on $X$ from the free operated semigroup $\frakS(X)$ on $X$~\mcite{Gub,Gop}.

For any nonempty set $Y$, let $S(Y)$ be the free semigroup generated by $Y$ and $\lfloor Y \rfloor := \{ \lfloor y \rfloor ~|~ y \in Y \}$ be a replica of $Y$. Thus $\lfloor Y \rfloor$ is a set that is indexed by $Y$ but disjoint with $Y$.

Let $X$ be a nonempty set. Define a direct system as follows. Let
$$
\mathfrak{S}_{0}:= S(X), \quad \mathfrak{S}_{1}:=S(X \sqcup \lfloor \mathfrak{S}_{0} \rfloor ) = S (X \sqcup \lfloor S(X)\rfloor ),
$$
with the natural injection
$$
i_{0,1}: \mathfrak{S}_{0} = S(X) \hookrightarrow \mathfrak{S}_{1} = S(X \sqcup \lfloor \mathfrak{S}_{0} \rfloor).
$$
Inductively assuming that $\mathfrak{S}_{n-1}$ and $
i_{n-2,n-1}: \mathfrak{S}_{n-2} \hookrightarrow \mathfrak{S}_{n-1}$ have been obtained for $n \geq 2$, we define
$$
\mathfrak{S}_{n}:= S (X \sqcup \lfloor \mathfrak{S}_{n-1}\rfloor)
$$
and have the injection
$$
\lfloor \mathfrak{S}_{n-2} \rfloor  \hookrightarrow \lfloor \mathfrak{S}_{n-1} \rfloor.
$$
Then by the freeness of $\mathfrak{S}_{n-1} = S(X \sqcup \lfloor \mathfrak{S}_{n-2} \rfloor)$, we have
$$
\mathfrak{S}_{n-1} = S(X \sqcup \lfloor \mathfrak{S}_{n-2} \rfloor) \hookrightarrow S(X \sqcup \lfloor \mathfrak{S}_{n-1} \rfloor) = \mathfrak{S}_{n}.
$$
Finally, define $\mathfrak{S}(X) := \displaystyle \underrightarrow{\lim} \mathfrak{S}_{n}$ and define an operator on $\mathfrak{S}(X)$ by
$$
w \longmapsto \lc w \rc \quad \mbox{for all}~ w \in \mathfrak{S}(X).
$$
The operator will be used to define an averaging operator later.
Elements in $\mathfrak{S}(X)$ are called {\bf bracketed words} on $X$.
Define the depth of $w\in \frakS(X)$ to be
\begin{equation}
\dpt(w) := \min \{ n ~|~ w \in \frakS_{n}\}.
\mlabel{eq:depth}
\end{equation}

Taking the limit in $\mathfrak{S}_n = S(X \sqcup \lfloor \mathfrak{S}_{n-1} \rfloor)$, we obtain
\begin{equation}
\mathfrak{S}(X) = S(X \sqcup \lfloor \mathfrak{S}(X) \rfloor).
\mlabel{eq:lim}
\end{equation}
Thus every bracketed word has a unique decomposition, called the {\bf standard decomposition},
\begin{equation}\mlabel{eq:decom}
w =w_{1}w_{2} \cdots w_{b},
\end{equation}
where $w_{i}$ is in $X$ or $\lfloor \frakS(X) \rfloor$ for $i = 1, 2, \cdots, b$. Then we define $b=b(w)$ to be the {\bf breadth} of $w$. Elements of $X\sqcup \lc \frakS (X)\rc$ are called {\bf indecomposable}. Define the {\bf head index} $h(w)$ of $w$ to be $0$ (resp. $1$) if $w_{1}$ is in $X$ (resp. $\lfloor \frakS(X) \rfloor$). Similarly define the {\bf tail index} $t(w)$ of $w$ to be $0$ (resp. $1$) if $w_{b}$ is in $X$ (resp. $\lfloor \frakS(X) \rfloor$). If $w$ is indecomposable, then $h(w)=t(w)$, called the {\bf index} $\mathrm{id}(w)$ of $w$. Further, by combining strings of indecomposable factors in $w$ that are in $X$, we obtain the {\bf block decomposition} of $w$:
\begin{equation}
w=\omega_1\cdots \omega_r,
\mlabel{eq:block}
\end{equation}
where each $\omega_i, 1\leq i\leq r,$ is in either $S(X)$ or $\lc \frakS(X)\rc$.

For example, for $w=x\lc y\lc x\rc\rc xy\lc y\rc$, we have $\dpt(w)=2, b(w)=5, h(w)=0, t(w)=1$. Its block decomposition is $w=x\lc y\lc x\rc\rc (xy) \lc y\rc$. So $r=4$.

As is known~\mcite{EG1,Gub}, the free Rota-Baxter algebra on a set is defined on the free $\bfk$-module spanned by the set of {\bf Rota-Baxter words} $\mathcal{R}(X)\subseteq \mathfrak{S}(X)$ consisting of bracketed words that do not contain a subword of the form $\lc u\rc\lc v\rc$ where $u, v\in \mathfrak{S}$. It is natural to consider the averaging case in a similar way: Choose the set $\mathfrak{B}$ of bracketed words that do not contain a subword of the forms $\lc u\rc \lc v\rc$ and $\lc \lc u\rc v\rc$, where $u, v\in \mathfrak{S}$. Unfortunately, this restriction is not enough. For example, we have
$$\lc x \lc x\rc^{(2)}  \rc =\lc x\rc \lc x\rc^{(2)} = \lc \lc x\rc\,\lc x\rc\rc = \lc x \lc x \rc \rc^{(2)}$$
by the axiom of an averaging operator. Here $\lc \ \rc^{(n)}, n\geq 0,$ denotes the $n$-th iteration of the operator $\lc\ \rc$. Thus only one of the two elements $\lc x\lc x\rc^{(2)}\rc$ and $\lc x\lc x\rc\rc^{(2)}$ can be kept in a basis for the free averaging algebra. This motivates us to give the following definition.

%\jun{I revised the superscripts according to your new notation}

\begin{defn}
Let $X$ be a set. A bracketed word $w\in \frakS(X)$ is called an {\bf averaging word} if $w$ does not contain any subword of the form $\lc u\rc\lc v\rc, \lc\lc u\rc v\rc$ or $\lc u\lc v\rc^{(2)}\rc$ for $u, v\in \frakS(X)$.
The set of averaging words on $X$ is denoted by $\AW=\AW(X)$. \mlabel{de:ave}
\end{defn}

For example, $\lc x\rc x, \lc x\lc x\rc\rc, \lc x \lc x \rc \rc^{(2)}$, $\lc x \lc x \rc^{(2)}   x \lc x \rc \rc$ are averaging words on $\{x\}$.

We will prove in Theorem~\mref{thm:free} that the free $\bfk$-module $\bfk \AW$ spanned by the set $\AW=\AW(X)$, equipped with a suitably defined multiplication and linear operator is the free averaging algebra on $X$. In order to carry out the construction and proof, we give the following recursive description of $\AW$.

For any nonempty subsets $G$, $H$ and $H'$ of $\mathfrak{S}(X)$, denote
\begin{eqnarray*}
\Lambda(G,H) &=:& (\sqcup_{r \geq 1}(G \lfloor H \rfloor)^{r}) \sqcup (\sqcup_{r \geq 1}(\lfloor H \rfloor G)^{r}) \sqcup (\sqcup_{r \geq 0}(G \lfloor H \rfloor)^{r}G) \sqcup (\sqcup_{r \geq 0}(\lfloor H \rfloor G)^{r} \lfloor H \rfloor),\\
\Lambda^{+}(G,H,H') &=:& (\sqcup_{r \geq 1}(G \lfloor H \rfloor)^{r-1} G\lc H' \rc) \sqcup (\sqcup_{r \geq 0}(G \lfloor H \rfloor)^{r}G),
\end{eqnarray*}
where, for a subset $T$ of $\mathfrak{S}(X)$, $T^{r} := \{t_{1} \cdots t_{r}~|~t_{i} \in T,
1\leq i \leq r\}$ and $T^{0}: = {\bf 1}$, the empty word.

We construct direct systems $\{\AW^{+}_{n} \}_{n \geq 0}$, $\{\widetilde{\AW}^{+}_{n} \}_{n \geq 0}$, $\{\AW_{n} \}_{n \geq 0}$ from $\mathfrak{S}(X)$ by the following recursions. First denote $\AW_{0} = \AW_{0}^{+} = \widetilde{\AW}_{0}^{+}=S(X)$. Then for $n \geq 0$, define
\begin{eqnarray}
&\AW_{n+1} = \Lambda(\AW_{0}, \widetilde{\AW}_{n}^{+}), \mlabel{eq:recur1}& \\
&\AW_{n+1}^{+} =\Lambda^{+}(\AW_{0}, \widetilde{\AW}_{n}^{+}, \AW_{n}^{+}), \quad \widetilde{\AW}_{n+1}^{+} = \AW_{n+1}^{+} \sqcup \lc \widetilde{\AW}_{n}^{+} \rc.&
\end{eqnarray}

We have the following properties on $\AW_{n}$, $\AW^{+}_{n}$ and $\widetilde{\AW}^{+}_{n}$.
\begin{prop}
For $n \geq 0$, we have
\begin{eqnarray}
\AW_{n} &\subseteq& \AW_{n+1}, \mlabel{eq:lim1}\\
\AW_{n}^{+} &\subseteq& \AW_{n+1}^{+},\mlabel{eq:lim2}\\
\widetilde{\AW}_{n}^{+} &\subseteq& \widetilde{\AW}_{n+1}^{+}.\mlabel{eq:lim3}
\end{eqnarray}
\end{prop}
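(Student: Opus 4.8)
The plan is to prove the three inclusions simultaneously by induction on $n$, the only real ingredient being the manifest monotonicity of the set-theoretic operations $\Lambda$ and $\Lambda^{+}$ in their arguments. Precisely, I would first record the elementary fact that for nonempty subsets $G_{1}\subseteq G_{2}$, $H_{1}\subseteq H_{2}$, $H_{1}'\subseteq H_{2}'$ of $\frakS(X)$ one has $\Lambda(G_{1},H_{1})\subseteq \Lambda(G_{2},H_{2})$ and $\Lambda^{+}(G_{1},H_{1},H_{1}')\subseteq \Lambda^{+}(G_{2},H_{2},H_{2}')$. This is immediate from the defining formulas: if $T\subseteq T'$ in $\frakS(X)$ then $T^{r}\subseteq (T')^{r}$ for all $r\geq 0$ and $\lc T\rc\subseteq \lc T'\rc$, and both $\Lambda$ and $\Lambda^{+}$ are assembled from their arguments purely by concatenation of subsets, application of $\lc\ \rc$, and (disjoint) union; each of these operations preserves inclusions, and each of $G$, $H$, $H'$ occurs only positively (the extra factor $G\lc H'\rc$ and the exponent $r-1$ in $\Lambda^{+}$ cause no trouble), so there is no antitone contribution.

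For the base case $n=0$, recall $\AW_{0}=\AW_{0}^{+}=\widetilde{\AW}_{0}^{+}=S(X)$. The $r=0$ summand of $\sqcup_{r\geq 0}(G\lc H\rc)^{r}G$ is just $G$, so taking $G=\AW_{0}$ in $\AW_{1}=\Lambda(\AW_{0},\widetilde{\AW}_{0}^{+})$ gives $\AW_{0}\subseteq \AW_{1}$, and taking $G=\AW_{0}^{+}$ in $\AW_{1}^{+}=\Lambda^{+}(\AW_{0},\widetilde{\AW}_{0}^{+},\AW_{0}^{+})$ gives $\AW_{0}^{+}\subseteq \AW_{1}^{+}$. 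Then $\widetilde{\AW}_{1}^{+}=\AW_{1}^{+}\sqcup\lc\widetilde{\AW}_{0}^{+}\rc\supseteq \AW_{1}^{+}\supseteq \AW_{0}^{+}=\widetilde{\AW}_{0}^{+}$, which settles the third inclusion.

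For the inductive step, assume all three inclusions hold for some $n\geq 0$. Monotonicity of $\Lambda$ in its second argument applied to $\widetilde{\AW}_{n}^{+}\subseteq \widetilde{\AW}_{n+1}^{+}$ gives $\AW_{n+1}=\Lambda(\AW_{0},\widetilde{\AW}_{n}^{+})\subseteq \Lambda(\AW_{0},\widetilde{\AW}_{n+1}^{+})=\AW_{n+2}$. Monotonicity of $\Lambda^{+}$ in its last two arguments applied to $\widetilde{\AW}_{n}^{+}\subseteq \widetilde{\AW}_{n+1}^{+}$ and $\AW_{n}^{+}\subseteq \AW_{n+1}^{+}$ gives $\AW_{n+1}^{+}\subseteq \AW_{n+2}^{+}$. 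Finally, combining this last inclusion with $\lc\widetilde{\AW}_{n}^{+}\rc\subseteq\lc\widetilde{\AW}_{n+1}^{+}\rc$ (again from the inductive hypothesis) yields $\widetilde{\AW}_{n+1}^{+}=\AW_{n+1}^{+}\sqcup\lc\widetilde{\AW}_{n}^{+}\rc\subseteq \AW_{n+2}^{+}\sqcup\lc\widetilde{\AW}_{n+1}^{+}\rc=\widetilde{\AW}_{n+2}^{+}$, completing the induction.

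I do not expect any genuine obstacle here; it is a routine ``nested direct system'' verification. The only places deserving a moment's attention are confirming that every argument of $\Lambda$ and $\Lambda^{+}$ enters monotonically (so that the desired inclusions, not their reverses, follow) and, in the base case, checking that the $r=0$ terms really recover $S(X)$ so that the three towers are increasing from the ground floor up. I would also remark in passing that the same $r=0$ observation shows, more generally, $\AW_{0}\subseteq \AW_{n+1}$ and $\AW_{0}^{+}\subseteq \AW_{n+1}^{+}$ for every $n\geq 0$, since $\AW_{0}$ always sits in the first slot.
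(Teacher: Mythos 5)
Your proposal is correct and follows essentially the same route as the paper: induction on $n$, with the inductive step resting on the (obvious) monotonicity of $\Lambda$ and $\Lambda^{+}$ in their arguments. The only difference is that you make explicit the monotonicity lemma and the $r=0$ check in the base case, which the paper leaves as "by definition."
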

\begin{proof}
We prove the inclusions by induction on $n$. When $n=0$, by definition, we have
$$
\AW_{0} \subseteq \AW_{1} , \quad \AW_{0}^{+} \subseteq \AW_{1}^{+}, \quad
\widetilde{\AW}_{0}^{+} \subseteq \widetilde{\AW}_{1}^{+}.
$$
Suppose that the inclusions in  Eqs.(\ref{eq:lim1})--(\ref{eq:lim3}) hold for $n = k \geq 0$, that is $\AW_{k} \subseteq \AW_{k+1}$, $\AW_{k}^{+} \subseteq \AW_{k+1}^{+}$ and $\widetilde{\AW}_{k}^{+} \subseteq \widetilde{\AW}_{k+1}^{+}$. Consider the case $n=k+1$. Then we immediately have
\begin{eqnarray*}
&\AW_{k+1} = \Lambda(\AW_{0}, \widetilde{\AW}_{k}^{+})  \subseteq  \Lambda(\AW_{0}, \widetilde{\AW}_{k+1}^{+}) = \AW_{k+2},& \\
&\AW_{k+1}^{+} = \Lambda^{+}(\AW_{0}, \widetilde{\AW}_{k}^{+}, \AW_{k}^{+})  \subseteq  \Lambda^{+}(\AW_{0}, \widetilde{\AW}_{k+1}^{+}, \AW_{k+1}^{+}) = \AW_{k+2}^{+},&\\
&\widetilde{\AW}_{k+1} = \AW_{k+1}^{+} \sqcup \lc \widetilde{\AW}_{k}^{+}\rc \subseteq \AW_{k+2}^{+} \sqcup \lc \widetilde{\AW}_{k+1}^{+}\rc=\widetilde{\AW}_{k+2}^{+}.&
\end{eqnarray*}
These complete the induction.
\end{proof}

Therefore we can take the direct systems
$$
\AW_\infty := \bigcup_{n \geq 0} \AW_{n} = \underrightarrow{\lim} ~\AW_{n} ,\quad
\AW^{+}:= \bigcup_{n \geq 0} \AW_{n}^{+}
= \underrightarrow{\lim} ~\AW_{n}^{+}, \quad \widetilde{\AW}^{+} := \bigcup_{n \geq 0} \widetilde{\AW}_{n}^{+} = \underrightarrow{\lim} ~\widetilde{\AW}_{n}^{+}.
$$

\begin{prop}
For a given set $X$, we have $\AW=\AW_\infty$\,.
\mlabel{pp:equiv}
\end{prop}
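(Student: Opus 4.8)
The plan is to establish the two inclusions $\AW_\infty\subseteq\AW$ and $\AW\subseteq\AW_\infty$ in turn. Because the systems $\{\AW_n\}$, $\{\widetilde{\AW}^{+}_n\}$ and $\{\AW^{+}_n\}$ are coupled through their recursions, one cannot argue about $\AW_n$ in isolation, so I would simultaneously prove the companion descriptions
$$\widetilde{\AW}^{+}=\{\,w\in\AW \mid \lc w\rc\in\AW\,\},\qquad \AW^{+}=\{\,w\in\AW \mid \lc w\rc\in\AW \ \text{and}\ h(w)=0\,\}.$$
Two elementary observations will be used repeatedly: (a) every contiguous subword of an averaging word is again an averaging word, since each of the forbidden patterns $\lc u\rc\lc v\rc$, $\lc\lc u\rc v\rc$, $\lc u\lc v\rc^{(2)}\rc$ is hereditary; and (b) avoidance of $\lc u\rc\lc v\rc$ forces the block decomposition $w=\omega_1\cdots\omega_r$ of an averaging word to alternate strictly between blocks lying in $S(X)$ and singleton brackets.

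For $\AW_\infty\subseteq\AW$, I would induct on $n$, proving together: (i) $\AW_n\subseteq\AW$; (ii) every $h\in\widetilde{\AW}^{+}_n$ lies in $\AW$ and satisfies $\lc h\rc\in\AW$; (iii) every $h\in\AW^{+}_n$ has $h(h)=0$. The base case is trivial since all three sets equal $S(X)$. For the step, an element of $\AW_{n+1}=\Lambda(\AW_0,\widetilde{\AW}^{+}_n)$ or of $\AW^{+}_{n+1}=\Lambda^{+}(\AW_0,\widetilde{\AW}^{+}_n,\AW^{+}_n)$ is an alternating product of $X$-blocks and brackets $\lc h_i\rc$ with each $h_i\in\widetilde{\AW}^{+}_n$ (using $\AW^{+}_n\subseteq\widetilde{\AW}^{+}_n$ for the last bracket in the $\Lambda^{+}$ case). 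By alternation no $\lc u\rc\lc v\rc$ occurs at the level of these blocks, and any occurrence of any forbidden pattern must either lie inside some $\lc h_i\rc$ or be equal to some $\lc h_i\rc$ — both impossible because $\lc h_i\rc\in\AW$ by (ii); this yields (i) together with the ``$\in\AW$'' parts of (ii)--(iii) for $n+1$, and (iii) follows because $\Lambda^{+}$ begins with a block of $\AW_0=S(X)$. It then remains to check $\lc h\rc\in\AW$ for $h\in\widetilde{\AW}^{+}_{n+1}=\AW^{+}_{n+1}\sqcup\lc\widetilde{\AW}^{+}_n\rc$: enclosing $h$ in a bracket creates a new forbidden subword only if $h$ begins with a bracket and $b(h)>1$ (pattern $\lc\lc u\rc v\rc$) or if $h$ ends with a word of the form $\lc\lc z\rc\rc$ preceded by a nonempty word (pattern $\lc u\lc v\rc^{(2)}\rc$). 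If $h\in\AW^{+}_{n+1}$ then $h(h)=0$ rules out the first, and $h$ ends either with an $X$-block or with a bracket whose content lies in $\AW^{+}_n$ and therefore has head index $0$ by (iii), so that bracket is not of the form $\lc\lc z\rc\rc$, ruling out the second; if $h=\lc h_0\rc\in\lc\widetilde{\AW}^{+}_n\rc$, then $b(h)=1$ makes both cases vacuous. This closes the induction and gives $\AW_\infty\subseteq\AW$.

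For $\AW\subseteq\AW_\infty$, I would induct on the depth $\dpt(w)$, proving together: (i$'$) every $w\in\AW$ lies in some $\AW_N$; (ii$'$) every $h\in\AW$ with $\lc h\rc\in\AW$ lies in some $\widetilde{\AW}^{+}_N$; (iii$'$) every $h\in\AW$ with $\lc h\rc\in\AW$ and $h(h)=0$ lies in some $\AW^{+}_N$. Depth $0$ is the base case, everything then being in $S(X)=\AW_0=\widetilde{\AW}^{+}_0=\AW^{+}_0$. For the step, take the block decomposition $w=\omega_1\cdots\omega_r$; each bracket $\omega_i=\lc h_i\rc$ is a contiguous subword of $w$, so $\lc h_i\rc\in\AW$, $h_i\in\AW$ and $\dpt(h_i)<\dpt(w)$, and (ii$'$) puts each $h_i$ in some $\widetilde{\AW}^{+}_{m_i}$; by the monotonicity of these systems all the $h_i$ lie in a common $\widetilde{\AW}^{+}_m$, and the alternating shape of $w$ exhibits $w\in\Lambda(\AW_0,\widetilde{\AW}^{+}_m)=\AW_{m+1}$, giving (i$'$). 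For (ii$'$): if $h(h)=1$, then $\lc h\rc\in\AW$ avoiding $\lc\lc u\rc v\rc$ forces $h=\lc h_0\rc$ for a single word $h_0$, and (ii$'$) in lower depth gives $h_0\in\widetilde{\AW}^{+}_N$, whence $h\in\lc\widetilde{\AW}^{+}_N\rc\subseteq\widetilde{\AW}^{+}_{N+1}$; if $h(h)=0$, apply (iii$'$). For (iii$'$): the block decomposition of $h$ now starts with an $X$-block; if it ends with an $X$-block, $h$ matches the second family $(\AW_0\lc\widetilde{\AW}^{+}_m\rc)^{r}\AW_0$ of $\Lambda^{+}$; if it ends with a bracket $\lc h_s\rc$, then $\lc h\rc$ avoiding $\lc u\lc v\rc^{(2)}\rc$ forces $\lc h_s\rc$ not to be of the form $\lc\lc z\rc\rc$, which combined with $\lc h_s\rc\in\AW$ avoiding $\lc\lc u\rc v\rc$ forces $h(h_s)=0$; so (iii$'$) in lower depth places $h_s$ in some $\AW^{+}_m$ while (ii$'$) places the earlier bracket contents in $\widetilde{\AW}^{+}_m$, and $h$ matches the first family $(\AW_0\lc\widetilde{\AW}^{+}_m\rc)^{s-1}\AW_0\lc\AW^{+}_m\rc$ of $\Lambda^{+}$. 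This gives $\AW\subseteq\AW_\infty$, and the two inclusions together yield $\AW=\AW_\infty$.

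The main obstacle is the last point in each direction: recognizing that the pattern $\lc u\lc v\rc^{(2)}\rc$ is exactly what forces the content of the rightmost bracket of a ``plus'' word to satisfy the stronger condition $h(\cdot)=0$ (membership in $\AW^{+}$ rather than merely $\widetilde{\AW}^{+}$), and then arranging the three mutually recursive statements (i$'$)--(iii$'$) — and the parallel triple (i)--(iii) — so that each correctly feeds the others. Everything else is routine verification of the three forbidden patterns against the definitions of $\Lambda$ and $\Lambda^{+}$.
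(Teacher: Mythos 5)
Your proof is correct and follows essentially the same route as the paper's: an induction matching the recursive construction via $\Lambda$ and $\Lambda^{+}$ against the three forbidden patterns, with the alternating block decomposition disposing of $\lc u\rc\lc v\rc$ and the head/tail constraints built into $\AW^{+}_n$ and $\widetilde{\AW}^{+}_n$ disposing of $\lc\lc u\rc v\rc$ and $\lc u\lc v\rc^{(2)}\rc$. The only difference is presentational: you isolate as explicit, mutually inductive characterizations of $\widetilde{\AW}^{+}$ and $\AW^{+}$ (via $\lc w\rc\in\AW$ and $h(w)=0$) what the paper's shorter proof uses implicitly, which makes the argument more detailed but not different in substance.
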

Because of the proposition, we will omit the notation $\AW_\infty$ in the rest of the paper.

\begin{proof}
For now we let $\AW_{(n)}, n\geq 0,$ denote the subset of $\AW$ consisting of its elements of depth less or equal to $n$: $\AW_{(n)}:=\AW\cap \frakS_n$. Then we have $\AW=\cup_{n\geq 0} \AW_{(n)}$. So we just need to verify
\begin{equation}
\AW_{(n)}=\AW_n \text{ for all } n\geq 0.
\mlabel{eq:equiv}
\end{equation}
We will prove it by induction on $n\geq 0$.

When $n=0$, there is nothing to prove since $\AW_{(0)}=\AW_0=S(X)$. Assume that Eq.~(\mref{eq:equiv}) has been verified for $n\leq k$ with $k\geq 0$ and consider the subsets $\AW_{(k+1)}$ and $\AW_{k+1}$.

Since $\AW_k:=\Lambda^+(\AW_0,\tilde{\AW}^+_{k-1})$ is contained in $\AW_{(k)}\subseteq \AW_{(k+1)},$ the subsets $\AW_k^+$ and $\tilde{\AW}_k^+$ are contained in $\AW_{(k+1)}$ by the induction hypothesis. Thus elements in these subsets do not contain elements of the forms excluded in the definition of $\AW$. Further elements in $\tilde{\AW}_k^+$ do not contain elements of the forms $\lc u\rc v$ and $u\lc v\rc^{(2)}$. Thus elements in $\lc \tilde{\AW}_k^+\rc$ do not contain elements of the form $\lc \lc u\rc v\rc$ and $\lc u\lc v\rc^{(2)}\rc$. Therefore by the definition of $\AW_{k+1}$ in Eq.~(\mref{eq:recur1}), elements of $\AW_{k+1}$ do not contain subwords of the forms excluded in the definition of $\AW$. Since $\AW_{k+1}$ is also contained in $\frakS_{k+1}$, we have $\AW_{k+1}\subseteq \AW_{(k+1)}$.

Conversely, since elements of $\AW_{(k+1)}$ have depth less or equal to $k+1$ and do not contain subwords of the form $\lc u\rc \lc v\rc$, by the induction hypothesis we have
$\AW_{(k+1)}\subseteq \Lambda(\AW_0,\AW_k)$. Since elements in $\AW_k$ are in brackets in $\Lambda(\AW_0,\AW_k)$ and elements of $\AW_{(k+1)}$ cannot contain elements of the forms $\lc \lc u\rc v\rc$ and $\lc u\lc r\rc^{(2)}\rc$, we have
$$\AW_{(k+1)}\subseteq \Lambda^+(\AW_0,\tilde{\AW}_k^+)=\AW_{k+1}.$$
This completes the induction.
\end{proof}

\begin{remark}
If $v$ is in $\lc \widetilde{\AW}^{+} \rc$, then there is unique $s\geq 1$ such that $v=\lc v' \rc^{(s)}$ with $v' \in \AW^{+}$. Thus if $v' = v'_{1} \cdots v'_{n}$ is in standard form, then $v'_{1} =x$ and $v'_{n}$ is either $x$ or $\lc \tilde{v}'_{n}\rc$ with $\tilde{v}'_{n} \in \AW^{+}$ when $n \geq 2$.
\mlabel{rem:stand}
\end{remark}

Taking the limit on both sides of Eq.~(\mref{eq:recur1}), we obtain
$\AW=\Lambda (\AW_0,\widetilde{\AW}^+)$. Thus in the block decomposition $w=\omega_1\cdots \omega_r$ of $w\in \AW$, the elements $\omega_1,\cdots, \omega_r$ are alternatively in $\AW_0=S(X)$ and $\lc \widetilde{\AW}^+\rc$. We show next that these are also sufficient conditions for $w$ to be in $\AW$.

\begin{lemma}
\begin{enumerate}
\item
Let $w=\omega_1\cdots \omega_r$ be the block decomposition of $w\in \frakS(X)$ in Eq.~(\ref{eq:block}). Then $w$ is an averaging word if and only if $w$ is in $\Lambda(X,\frakS(X))$ and each $\omega_i, 1\leq i\leq r,$ is an averaging word and $\mathrm{id}(w_i)\neq \mathrm{id}(w_{i+1})$ for $1\leq i\leq r-1$.
%(that is, $\omega_1,\cdots, \omega_r$ are alternatively in $S(X)$ and $\lc \widetilde{\AW}^+\rc$.
\mlabel{it:block}
\item
Let $w=w_1\cdots w_b$ be the standard decomposition of $w\in \frakS(X)$ in Eq.~(\mref{eq:decom}). If $w$ is an averaging word, then each $w_i, 1\leq i\leq b,$ is an averaging word. Further, if $w$ is an averaging word and $w=uv$ with $u, v\in\frakS(X)$, then $u$ and $v$ are averaging words.
\mlabel{it:st}
\end{enumerate}
\mlabel{lem:ind}
\end{lemma}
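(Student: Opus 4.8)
I would prove both parts by a local analysis of the three forbidden patterns in Definition~\ref{de:ave}. Recall that a bracketed word $z\in\frakS(X)$ fails to be an averaging word exactly when one of $\lc u\rc\lc v\rc$, $\lc\lc u\rc v\rc$, $\lc u\lc v\rc^{(2)}\rc$ occurs in $z$, where an \emph{occurrence} is a contiguous substring of the string of $z$ that is itself a bracket-balanced element of $\frakS(X)$. The plan is to isolate one bookkeeping fact about occurrences and then let the brevity of the three patterns dictate everything else. The fact is: tracking the bracket-depth along the string of $z$, an occurrence $P$ anchored at depth $0$ is a concatenation $P=z_jz_{j+1}\cdots z_k$ of consecutive factors of the standard decomposition $z=z_1\cdots z_b$, while an occurrence anchored at depth $\ge 1$ lies inside a single indecomposable factor $z_j=\lc q\rc$ (indeed inside $q$). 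This is the one step I expect to need genuine care.

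Granting it, part~(b) is immediate: in $w=w_1\cdots w_b$ each $w_i$ is a letter of $X$ (no nontrivial occurrence) or an element $\lc p\rc$ occurring inside $w$; and if $w=uv$ with $u,v\in\frakS(X)$, then both $u$ and $v$ occur inside $w$ as concatenations of consecutive standard factors. Thus a forbidden occurrence inside $w_i$, $u$, or $v$ would be one inside $w$, and there is none, so $w_i,u,v\in\AW$. The same remark gives the easy part of the forward direction of~(a): each block $\omega_i$ of $w=\omega_1\cdots\omega_r$ is an occurrence inside $w$, hence lies in $\AW$ when $w$ does. It remains, for the forward direction, to see that the blocks alternate in type, i.e.\ $\mathrm{id}(\omega_i)\ne\mathrm{id}(\omega_{i+1})$ (with $\mathrm{id}=h=t$ equal to $0$ on a block in $S(X)$ and $1$ on a block in $\lc\frakS(X)\rc$), which is the condition ``$w\in\Lambda(X,\frakS(X))$'' of the statement: two consecutive blocks in $S(X)$ contradict the maximality built into the block decomposition, and two consecutive blocks $\lc u\rc,\lc v\rc$ would make $\lc u\rc\lc v\rc$ a depth-$0$ occurrence in $w$, contradicting $w\in\AW$.

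The converse of~(a) carries the content and is still short. Suppose the blocks of $w=\omega_1\cdots\omega_r$ alternate in type, each $\omega_i\in\AW$, and (for contradiction) $P$ is a forbidden occurrence in $w$. If $P$ is anchored at depth $\ge 1$, it is an occurrence inside some $\omega_i$, contradicting $\omega_i\in\AW$. So $P$ is anchored at depth $0$, whence $P=w_j\cdots w_k$ for consecutive standard factors. If $k=j$, then $P=w_j$ is a single bracketed indecomposable, hence equals a block $\omega_i$, and $\omega_i=P$ contains the forbidden $P$---contradiction. If $k>j$, then $P$ has at least two top-level factors, which excludes the single-indecomposable shapes $\lc\lc u\rc v\rc$ and $\lc u\lc v\rc^{(2)}\rc$; so $P=\lc u\rc\lc v\rc$ with $w_j=\lc u\rc$, $w_{j+1}=\lc v\rc$, i.e.\ two consecutive blocks of index $1$---contradicting alternation. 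Hence $w$ has no forbidden occurrence, i.e.\ $w\in\AW$.

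The only real obstacle is thus the occurrence bookkeeping of the first paragraph: fixing once and for all what a bracketed subword is, and verifying through the bracket-depth function the dichotomy between depth-$0$ occurrences (runs of consecutive standard factors) and deeper ones (confined to a single indecomposable factor). With that settled, the three forbidden patterns are short enough---one or two top-level factors each---that every case above is forced; in particular neither the recursion defining $\AW$ nor Remark~\ref{rem:stand} is needed here.
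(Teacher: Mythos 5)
Your argument is correct, but it reaches the conclusion by a genuinely different route than the paper, and the difference is worth recording. For the nontrivial direction of (a) the paper does not reason about forbidden patterns directly: it invokes the recursive filtration $\AW_n$ of Proposition~\ref{pp:equiv} and argues by induction on the depth $k$, observing that if each block $\omega_i$ is an averaging word of depth $\leq k+1$ then each bracketed block lies in $\lc \widetilde{\AW}_{k}^{+}\rc$, so the alternation of blocks forces $w\in\Lambda(S(X),\widetilde{\AW}_{k}^{+})=\AW_{k+1}$; parts (b) and the forward half of (a) are dismissed as immediate from Definition~\ref{de:ave}. You instead never touch the recursion (or Remark~\ref{rem:stand}) and argue purely from the forbidden-pattern definition, via the dichotomy that a balanced contiguous occurrence anchored at bracket-depth $0$ is a union of consecutive standard factors while a deeper occurrence is confined to a single indecomposable factor; the shortness of the three patterns then forces every case, and you even supply the alternation claim in the forward direction, which the paper's proof leaves implicit. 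What each buys: the paper's induction keeps the lemma aligned with the filtration $\AW_n$ that is used throughout Section~2 (so membership in the pieces $\AW_{k+1}$ comes for free), whereas your occurrence analysis is more elementary, self-contained, and handles (b) and both directions of (a) uniformly. The one thing you must actually write out is the depth dichotomy itself, which you only assert: it follows from the standard prefix-depth argument (a word of $\frakS(X)$ has nonnegative bracket excess on every prefix and zero total excess, so an occurrence starting at depth $d$ stays at depth $\geq d$ and ends at depth $d$, hence starts and ends at factor boundaries when $d=0$ and cannot escape the enclosing bracketed factor when $d\geq 1$). Since you identify exactly this verification and it is routine, I regard the proposal as complete in outline rather than gapped.
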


\begin{proof}
(\mref{it:block})
Let $w\in \frakS(X)$ with block decomposition $w=\omega_1\cdots\omega_r$ be an averaging word. Then by the definition of an averaging word, $w$ does not contain any of the subwords excluded in the definition of averaging words. Thus each $\omega_i, 1\leq i\leq r,$ does not contain these subwords and hence is an averaging word.

Conversely, let $w=\omega_1\cdots \omega_r$ be in $\Lambda(X,\frakS(X))$ and $\omega_1,\cdots,\omega_r$ be averaging words. Since $w$ is in $\frakS_k$ for some $k\geq 0$, we just need to prove that $w$ is in $\AW$ by induction on $k\geq 0$. When $k=0$, we have $\frakS_0=S(X)$ which is in $\AW$. Hence the claim holds. Assume that the claim holds for all $w\in \frakS_k$ where $k\geq 0$ and consider $w\in \frakS_{k+1}$. Thus $w$ is in $\Lambda(X,\frakS)$ and each $\omega_i, 1\leq i\leq r,$ is in $\AW_{k+1}$. The second condition means that each $\omega_i$ is either in $S(X)$ or in $\lc \widetilde{\AW}_k^+\rc$. Then the first condition means that $w$ is in $\Lambda(S(X),\widetilde{\AW}_k^+)=\AW_{k+1}$. This completes the induction.

\smallskip

\noindent
(\mref{it:st})
Both statements follow directly from the definition of averaging words.
\end{proof}

\subsubsection{Construction of the product and operator}
Let $X$ be a set and let $\bfk \AW$ to be the free $\bfk$-module generated by $\AW:=\AW(X)$. To define a multiplication $\diamond$ on $\bfk\AW$, we first define  $u\diamond v$ for two words $u$ and $v$ in $\AW$ by induction on the depth $\dpt(u)\geq 0$ of $u$ as follows.

%For any indecomposable words $u$ and $v$, if one of $u$ and $v$ is in $S(X)$, the product $\diamond$ is defined to be the concatenation. The product $\diamond$ for any two words $u$ and $v$ in $\lc \widetilde{\AW}^{+} \rc$ is defined inductively on the depth $\dpt(u)\geq 0$ of $u$ as follows.

If $\dpt(u)=0$, then $u$ is in $S(X)$ and the product $\diamond$ is the concatenation. Assume that $u \diamond v$ have been defined for all $u, v\in \AW$ with  $\dpt(u) \leq k$ where $k \geq 0$. Consider $u, v\in \AW$ with $\dpt(u)=k+1$. First consider the case when $u$ and $v$ are indecomposable, namely are in $X\sqcup \lc \AW\rc$. For $u, v\in \lc \AW\rc$, rewrite $u = \lc u \rc^{(s)}, v = \lc v' \rc^{(t)}$, where $s, t\geq 1$ while $u', v'$ are in $\AW^{+}$ as in Remark~\mref{rem:stand} and hence are not in $\lc \AW\rc$. Then define
\begin{equation}\mlabel{eq:pro}
u \diamond v =\left \{\begin{array}{ll} uv, & \text{if } u \text{ or } v \text{ is in } X, \\
 \lc u' \diamond \lc v' \rc \rc^{(s+t-1)}, & \mbox{if} ~u = \lc u' \rc^{(s)} \text{ and } v = \lc v' \rc^{(t)},
 \end{array} \right .
\end{equation}
where $u'\diamond \lc v' \rc$ is defined by the induction hypothesis since $\dpt(u')=\dpt(u)-s$ is less than $k+1$. Next consider the general case when $u$ and $v$ are in $\AW$. Let $u = u_{1} u_{2} \cdots u_{m}$, $v = v_{1} v_{2}
 \cdots v_{n}$ be their standard decompositions in Eq.~(\mref{eq:decom}). Then define
\begin{equation}
u \diamond v = u_{1} u_{2} \cdots u_{m-1} (u_{m} \diamond v_{1}) v_{2} \cdots v_{n},
\mlabel{eq:pro2}
\end{equation}
where $u_m\diamond v_1$ is the concatenation or as defined in Eq.~(\mref{eq:pro}).

For example, for $u_{1} = x, v_{1} = \lfloor x \rfloor$, $u_{2} = \lc x \lc x \rc \rc^{(2)}$ and $v_{2} = \lc x \rc^{(3)}$, we have
\begin{eqnarray*}
u_{1} \diamond v_{1} = x\lfloor x \rfloor, \quad u_{2} \diamond v_{2} = \lc x \lc x \rc \diamond \lc x\rc \rc^{(4)} = \lc x \lc x \lc x\rc\rc \rc^{(4)}.
\end{eqnarray*}
By the concatenation case and Eq.~(\mref{eq:pro}), we have
\begin{equation}
h(u) = h(u \diamond v), \qquad t(v) = t(u \diamond v).
\mlabel{eq:sp0}
\end{equation}

Extending $\diamond$ bilinearly, we obtain a binary operation on $\bfk\mathcal{A}(X)$. The following properties can be derived from the definition of $\diamond$ directly.
\begin{lemma}\label{lem:sp}
Let $w, w' \in \AW$. Then
\begin{enumerate}
\item $h(w) = h(w \diamond w')$ and $t(w') = t(w \diamond w')$.
\item If $t(w) \neq h(w')$ or $t(w) =h(w')=0$, then for any $w'' \in \AW$,
\begin{equation}\mlabel{eq:sp1}
(ww') \diamond w'' = w (w' \diamond w''),
\end{equation}
\begin{equation}\mlabel{eq:sp2}
w'' \diamond (ww') = (w'' \diamond w)w'.
\end{equation}
\end{enumerate}
\end{lemma}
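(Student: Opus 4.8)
The plan is to prove both parts directly from the inductive definition of $\diamond$ in Equations~(\ref{eq:pro}) and~(\ref{eq:pro2}), tracking carefully how the standard decomposition of a product is assembled from those of its factors. Part~(a) is essentially a restatement of Equation~(\ref{eq:sp0}): when $w,w'$ are indecomposable the claim is immediate from~(\ref{eq:pro}) (concatenation preserves the leading and trailing factors, and the bracketed case produces a word of the form $\lc u'\diamond\lc v'\rc\rc^{(s+t-1)}$, which is indecomposable with head and tail index $1$, matching $h(w)=t(w')=1$); for the general case, Equation~(\ref{eq:pro2}) shows the first factor of $w\diamond w'$ is $w_1$ (if $m\geq 2$) or the head of $w_1\diamond v_1$ (if $m=1$), and either way $h(w\diamond w')=h(w)$, and symmetrically for the tail. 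Strictly, one should note that when $m=1$ and $w_1\diamond v_1$ is itself computed by~(\ref{eq:pro}) the head index is still that of $w_1$, which is $h(w)$; this is exactly Equation~(\ref{eq:sp0}).

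For part~(b), I would argue by induction on $\dpt(ww')$, or equivalently split into cases according to the standard decompositions of $w$, $w'$ and $w''$. Write $w = w_1\cdots w_m$ and $w' = w'_1\cdots w'_n$ in standard form; since $t(w)\neq h(w')$ or $t(w)=h(w')=0$, the word $ww'$ has standard decomposition $w_1\cdots w_m w'_1\cdots w'_n$ (no merging of adjacent indecomposable factors occurs, because a merge would require $w_m$ and $w'_1$ both in $X$, i.e. $t(w)=h(w')=0$, which is the allowed boundary case where concatenation and $\diamond$ of two elements of $S(X)$ agree anyway). Now apply~(\ref{eq:pro2}) to $(ww')\diamond w''$: the last standard factor of $ww'$ is $w'_n$, so $(ww')\diamond w'' = w_1\cdots w_m w'_1\cdots w'_{n-1}(w'_n\diamond w''_1)w''_2\cdots$, which regroups precisely as $w\,(w'\diamond w'')$ after recognizing that $w'\diamond w''$ has the same leading factors $w'_1\cdots w'_{n-1}$ by~(\ref{eq:pro2}). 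The symmetric statement~(\ref{eq:sp2}) follows the same way using the first standard factor. One edge case needs separate attention: when $n=1$, i.e. $w'$ is indecomposable, so that $w'\diamond w''$ is given by~(\ref{eq:pro}); here I would check that $w\,(w'\diamond w'')$ still equals $(ww')\diamond w''$ by noting $ww'$ has last factor $w'$ and applying~(\ref{eq:pro2}) to it directly, and that the two possible subcases of~(\ref{eq:pro}) (one of $w', w''$ in $X$, or both bracketed) are each handled by the definition without further merging on the left.

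The main obstacle I anticipate is purely bookkeeping: ensuring that the standard decomposition of $ww'$ really is the concatenation of those of $w$ and $w'$ under the hypothesis on indices — this is where the condition "$t(w)\neq h(w')$ or $t(w)=h(w')=0$" is used, and one must be careful that in the excluded-looking case $t(w)=h(w')=0$ the two trailing/leading factors lie in $S(X)$ and concatenating them inside $S(X)$ is consistent with treating them as a single block, so that~(\ref{eq:pro2}) still applies verbatim with $u_m\diamond v_1$ being ordinary concatenation. No genuinely hard step is involved; the lemma is a direct unwinding of the recursive definition, and I would present it as such, spelling out the two cases of~(\ref{eq:pro}) and the role of the index hypothesis, and leaving the remaining routine verifications to the reader.
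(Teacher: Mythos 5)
Your proof is correct and is exactly the direct unwinding of Eqs.~(\ref{eq:pro}) and (\ref{eq:pro2}) that the paper intends: the paper gives no further argument, saying only that these properties ``can be derived from the definition of $\diamond$ directly,'' and your tracking of the standard decompositions supplies precisely that verification. One cosmetic remark: the standard decomposition (unlike the block decomposition) never merges adjacent factors lying in $X$, so in the boundary case $t(w)=h(w')=0$ the standard decomposition of $ww'$ is automatically the concatenation of those of $w$ and $w'$, making your extra caveat about merging unnecessary, though harmless.
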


\subsubsection{The construction of the operator}
We next define a linear operator $P$ on $\bfk \AW$. For $u \in \AW$, there is some $n$ such that $u \in \AW_{n}$. Recall that
$$ \AW^{+}_{n} = \big(\sqcup_{r \geq 1}(\AW_{0} \lfloor \widetilde{\AW}_{n-1}^{+} \rfloor)^{r-1} \AW_{0}\lc \AW^{+}_{n-1} \rc\big) \sqcup \big(\sqcup_{r \geq 0}(\AW_{0} \lfloor \widetilde{\AW}_{n-1}^{+} \rfloor)^{r}\AW_{0}\big),\quad
\widetilde{\AW}^{+}_{n}= \AW_{n}^{+} \sqcup \lfloor \widetilde{\AW}_{n-1}^{+} \rfloor
$$
and
\begin{eqnarray}
\AW_{n} &=&\big(\sqcup_{r \geq 1}(\AW_{0} \lfloor \widetilde{\AW}_{n-1}^{+} \rfloor)^{r}\big) \sqcup \big(\sqcup_{r \geq 1}(\lfloor \widetilde{\AW}_{n-1}^{+} \rfloor \AW_{0})^{r}\big) \mlabel{eq:odec}
\\
&&\sqcup \big(\sqcup_{r \geq 0}(\AW_{0} \lfloor \widetilde{\AW}_{n-1}^{+} \rfloor)^{r}\AW_{0}\big) \sqcup \big(\sqcup_{r \geq 0}(\lfloor \widetilde{\AW}_{n-1}^{+} \rfloor \AW_{0})^{r} \lfloor \widetilde{\AW}_{n-1}^{+} \rfloor\big).\notag
\end{eqnarray}
Since $\widetilde{\AW}_{n-1}^{+} = \AW_{n-1}^{+} \sqcup \lc \widetilde{\AW}_{n-2}^{+} \rc$, the first of the four disjoint union component becomes
$$
\sqcup_{r \geq 1}(\AW_{0} \lfloor \widetilde{\AW}_{n-1}^{+} \rfloor)^{r} = \big(\sqcup_{r \geq 1}(\AW_{0} \lfloor \widetilde{\AW}_{n-1}^{+} \rfloor)^{r-1} \AW_{0}\lc \AW^{+}_{n-1} \rc\big) \sqcup \big(\sqcup_{r \geq 1}(\AW_{0} \lfloor \widetilde{\AW}_{n-1}^{+} \rfloor)^{r-1} \AW_{0}\lc  \widetilde{\AW}^{+}_{n-2} \rc^{(2)}\big),
$$
of which the first disjoint component is exactly the first disjoint union component of $\AW_n^+$. Also the third disjoint union component of $\AW_n$ is the second disjoint union component of $\AW_n^+$.
By collecting the components of $\widetilde{\AW}_{n}^{+}$ together in this way, we see that $\AW_{n}$ can be rearranged as
\begin{equation}
\AW_{n} = \widetilde{\AW}_{n}^{+} \sqcup \big(\sqcup_{r \geq 1}(\AW_{0} \lfloor \widetilde{\AW}_{n-1}^{+} \rfloor)^{r-1} \AW_{0}\lc  \widetilde{\AW}^{+}_{n-2} \rc^{(2)}\big) \sqcup \big(\sqcup_{r \geq 1}(\lfloor \widetilde{\AW}_{n-1}^{+} \rfloor \AW_{0})^{r}\big) \sqcup \big(\sqcup_{r \geq 1}(\lfloor \widetilde{\AW}_{n-1}^{+} \rfloor \AW_{0})^{r} \lfloor \widetilde{\AW}_{n-1}^{+} \rc\big).
\mlabel{eq:odec2}
\end{equation}
So any $u\in \AW_n$ is in one of the above disjoint components. We accordingly define
$$
P_X(u) = \left\{\begin{array}{ll} \lfloor u \rfloor,  &\mbox{if}~ u \in \widetilde{\AW}^{+}_{n},\\
\lc u_{1} \diamond \lc u_{2} \rc \rc^{(s)}, & \mbox{if}~u \in \sqcup_{r \geq 1}(\lc \widetilde{\AW}_{n-1}^{+} \rc  \AW_{0} )^{r} \text{ with } u = \lc u_{1} \rc^{(s)} u_{2}, \\
\lc u_{1}' \lc u_{2}' \rc \rc^{(s)}, &\mbox{if}~u \in \sqcup_{r \geq 1}(\AW_{0} \lfloor \widetilde{\AW}_{n-1}^{+} \rfloor)^{(r)} \mbox{ with }~u = u_{1}'\lc u_{2}' \rc^{(s)}, s \geq 2,\\
\lc u_{1} \diamond \lc u_{2} \lc u_{3} \rc \rc  \rc^{(s+t-1)},&\mbox{if}~u \in \sqcup_{r \geq 1}(\lfloor \widetilde{\AW}_{n-1}^{+} \rfloor \AW_{0})^{r} \lfloor \widetilde{\AW}_{n-1}^{+} \rfloor \mbox{ with }~u = \lc u_{1} \rc^{(s)}u_{2} \lc u_{3}\rc^{(t)},
\end{array} \right.
$$
where $u_{1}, u_{2}', u_{3}$ are in $\AW^{+}$ and $h(u_{2}) =t(u_{2})=0$. Thus $P_X(u)$ is in $\AW$.
Then extending $P_X$ by linearity to a linear operator on $\mathcal{A}(X)$ that we still denote by $P_X$.

For example, if $u = \lfloor x \lfloor y \rfloor \rfloor z$, $v = x \lc y \rc^{(2)}$ and $w = \lc x \lc y \rc \rc z \lc x \rc^{(2)} $ then we have
$$
P_X(u) = \lfloor x\lfloor y \rfloor \diamond \lfloor z \rfloor  \rfloor = \lfloor x \lfloor y \lfloor z \rfloor \rfloor \rfloor, \quad P_X(v)=\lc x \lc y \rc  \rc^{(2)},
$$
$$
P_X(w) = \lc x\lc y \rc \diamond \lc z \lc x \rc  \rc    \rc^{(2)} = \lc x \lc y\lc z\lc x \rc \rc  \rc \rc^{(2)}.
$$

Let
$
j_{X}: X \longrightarrow S(X) \longrightarrow \AW \longrightarrow \bfk \AW
$
denote the natural injection from $X$ to $\mathcal{A}(X)$.
\begin{theorem}\label{thm:main}
Let $X$ be a non-empty set. Then
\begin{enumerate}
\item The pair $(\bfk \mathcal{A}, \diamond)$ is an algebra;
\mlabel{it:alg}
\item the triple $(\bfk\mathcal{A}, \diamond, P_X)$ is an averaging algebra;
\mlabel{it:ave}
\item the quadruple $(\bfk\mathcal{A}, \diamond, P_X, j_{X})$ is the free averaging algebra on set $X$. More precisely, for any averaging algebra $B$ and a map $f: X \longrightarrow B$, there is a unique averaging \bfk-algebra homomorphism $\bar{f}: \bfk\mathcal{A} \longrightarrow B$ such that $f = \bar{f} \circ j_{X}$.
\mlabel{it:free}
\end{enumerate}
\mlabel{thm:free}
\end{theorem}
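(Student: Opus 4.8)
The plan is to verify the three assertions in sequence, building each on the previous one, and to exploit systematically the recursive description $\AW = \Lambda(\AW_0, \widetilde{\AW}^+)$ from Proposition~\ref{pp:equiv} together with the decomposition lemmas (Lemma~\ref{lem:ind} and Lemma~\ref{lem:sp}). For part~(\ref{it:alg}), the only nontrivial point is associativity of $\diamond$, since bilinearity and closedness in $\bfk\AW$ are built into the construction. I would prove $(u\diamond v)\diamond w = u\diamond(v\diamond w)$ for $u,v,w\in\AW$ by a double induction: the outer induction on $\dpt(u)+\dpt(v)+\dpt(w)$ (or on the total number of indecomposable factors), and an inner case analysis on the head/tail indices of $u,v,w$ using Lemma~\ref{lem:sp}. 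When the relevant "interface" factors $u_m, v_1, \dots$ lie in $S(X)$ the product is plain concatenation and associativity is immediate; the genuine work is the case where all three interface factors lie in $\lc\AW\rc$, say $u_m = \lc u'\rc^{(s)}$, $v_1 = v_n = \lc v'\rc^{(t)}$ (when $v$ is indecomposable), $w_1 = \lc w'\rc^{(q)}$, where Eq.~(\ref{eq:pro}) must be unwound twice and the identity reduces to an instance of associativity at strictly smaller depth. One must also handle the intermediate cases where $v$ is decomposable, reducing via Eq.~(\ref{eq:pro2}) and Lemma~\ref{lem:sp}(2) to the indecomposable situation.

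For part~(\ref{it:ave}), I would check the averaging identity $P_X(u)\diamond P_X(v) = P_X(u\diamond P_X(v)) = P_X(P_X(u)\diamond v)$ for $u,v\in\AW$, again by induction on depth and by cases according to which of the four disjoint components of Eq.~(\ref{eq:odec2}) the words $u$ and $v$ belong to (equivalently, according to their head and tail indices). The key structural fact is Eq.~(\ref{eq:sp0}): $h$ and $t$ are preserved by $\diamond$, so one can predict which component $u\diamond P_X(v)$ lands in and apply the appropriate branch of the definition of $P_X$. In each branch the verification collapses, after rewriting iterated brackets $\lc\ \rc^{(s)}$ and using Eq.~(\ref{eq:pro}), to an identity of the shape $\lc a\diamond\lc b\rc\rc^{(s)}$ matched on both sides, with the residual equality being an associativity statement for $\diamond$ at lower depth — hence available from part~(\ref{it:alg}). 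The definition of $P_X$ has been engineered precisely so that every output is an averaging word (this is asserted at the end of the construction and should be re-checked as a lemma: $P_X(\AW)\subseteq\AW$), and the three exclusion patterns of Definition~\ref{de:ave} are exactly what guarantee that the two iterated-bracket forms $\lc x\lc x\rc^{(2)}\rc$ and $\lc x\lc x\rc\rc^{(2)}$ are never both present, so the recursion is well-defined.

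For part~(\ref{it:free}), given an averaging algebra $B$ and $f\colon X\to B$, I would define $\bar f\colon \bfk\AW\to B$ by induction on depth: $\bar f$ on $S(X)$ is the unique semigroup-then-algebra extension of $f$, and $\bar f(\lc w'\rc^{(s)}) := P_B^{(s)}(\bar f(w'))$ on indecomposables, extended multiplicatively over the standard decomposition and then linearly. One must check (i) $\bar f$ is well-defined — here the point is that distinct averaging words have distinct images is not what we need; rather we need that the formula does not depend on any choices, which is automatic since the standard decomposition is unique and $P_X$-values were assigned to a genuine basis $\AW$; (ii) $\bar f$ is an algebra homomorphism, i.e. $\bar f(u\diamond v) = \bar f(u)\bar f(v)$, proved by induction on depth following the two clauses Eq.~(\ref{eq:pro})–(\ref{eq:pro2}), where the bracket clause uses the averaging identity in $B$ to match $P_B^{(s)}(\bar f(u'))\,P_B^{(t)}(\bar f(v')) = P_B^{(s+t-1)}(\bar f(u'\diamond\lc v'\rc))$; (iii) $\bar f\circ P_X = P_B\circ\bar f$, by checking it on each of the four components of Eq.~(\ref{eq:odec2}) and using the averaging axiom in $B$ to absorb the extra operator; and (iv) uniqueness, which is forced because $\AW$ is generated from $X$ under $\diamond$ and $\lc\ \rc$, so any averaging-algebra homomorphism extending $f$ must agree with $\bar f$ on all of $\AW$. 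The main obstacle throughout is bookkeeping: the heart of the whole theorem is the associativity proof in part~(\ref{it:alg}) and its re-use in parts~(\ref{it:ave}) and~(\ref{it:free}), where the case analysis on head/tail indices combined with the nested iterated brackets $\lc\ \rc^{(s)}$ produces many subcases, and one must organize the induction (on depth, with a carefully chosen secondary parameter) so that every reduction is strictly decreasing.
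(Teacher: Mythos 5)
Your plan is correct and follows essentially the same route as the paper's own proof: associativity of $\diamond$ by induction on depth with a head/tail-index case analysis reducing to the all-bracketed interface case, the averaging identities for $P_X$ by cases on the components of Eq.~(\ref{eq:odec2}) using the explicit formulas and the already-established associativity, and freeness via the forced recursive definition of $\bar f$, a homomorphism check by induction (the paper uses breadth sum with an inner depth induction, matching your ``secondary parameter''), operator-compatibility checked on the four components, and uniqueness from the fact that $\AW$ is generated from $X$ under $\diamond$ and $P_X$. No essential idea is missing.
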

The proof of the theorem is given in the next subsection.

\subsection{The proof of Theorem~\mref{thm:free}}
We now prove Theorem~\mref{thm:free}.

\noindent
(\mref{it:alg})
We only need to verify the associativity of $\diamond$:
\begin{equation}\mlabel{eq:asso}
(W \diamond W') \diamond W'' = W \diamond (W' \diamond W'') \quad \text{for all }  W,W',W'' \in \AW.
\end{equation}
For this we proceed by induction on the depth $\dpt(W)\geq 0$ of $W$. If $\dpt(W)=0$, then $W$ is in $S(X)$. Then $W\diamond W'=WW'$ and $W\diamond (W'\diamond W'')=W(W'\diamond W'')$ and the associativity follows from the definition of $\diamond$ in Eqs.~(\ref{eq:pro}) and (\mref{eq:pro2}). Suppose Eq.~(\mref{eq:asso}) has been verified for all $W, W', W''\in \AW$ with $\dpt(W)\leq k$ for $k\geq 0$ and consider $W, W', W''\in \AW$ with $\dpt(W)=k+1$.
We consider three cases.
\smallskip

\noindent
{\bf Case I. Suppose $t(W) \neq h(W')$ or $t(W) = h(W') = 0$: } Then by Lemma \mref{lem:sp}, we have
\begin{equation}\mlabel{eq:semiasso}
(W \diamond W') \diamond W'' = (WW') \diamond W'' = W(W' \diamond W'') = W \diamond (W' \diamond W'').
\end{equation}

\noindent
{\bf Case II. Suppose $t(W') \neq h(W'')$ or $t(W') = h(W'')=0$:} This case is proved similarly.
\smallskip

\noindent
{\bf Case III: Suppose $t(W) = h(W')=1$ and $t(W') = h(W'')=1$: } We divide this case into the following four subcases.
\begin{enumerate}
\item[(i)] {\bf Suppose $b(W') \geq 2$:} Then $W' = w_{1}'w_{2}'$ with $w_{1}', w_{2}' \in \AW$ and either $t(w_{1}') \neq h(w_{2}')$ or $t(w_{1}') = h(w_{2}') = 0$. Then we have
\begin{eqnarray*}
(W \diamond W')\diamond W'' &=& (W \diamond (w_{1}'w_{2}')) \diamond W'' \\
&=& ((W \diamond  w_{1}')w_{2}') \diamond W'' \quad \mbox{(by Eq. (\mref{eq:sp2}))}  \\
&=& (W \diamond w_{1}') (w_{2}' \diamond W'') \quad \mbox{(by Eq. (\mref{eq:sp1}))} \\
&=& W \diamond (w_{1}' \diamond (w_{2}' \diamond W''))~~ \mbox{(by Case I)} \\
&=& W \diamond ((w_{1}'w_{2}') \diamond W'')  \quad \mbox{(by Eq. (\mref{eq:semiasso}))} \\
&=& W \diamond (W' \diamond W'').
\end{eqnarray*}
\item[(ii)]
{\bf Suppose $b(W) \geq 2$:} Then $W = w_{1}w_{2}$ with $w_{1} \in \AW$, $b(w_{2})=1$ and either $t(w_{1}) \neq h(w_{2})$ or $t(w_{1}) = h(w_{2}) =0$. By Eq.~(\ref{eq:semiasso}), we have
\begin{eqnarray*}
&(W \diamond W') \diamond W'' = ((w_{1}w_{2}) \diamond W') \diamond W'' = (w_{1}(w_{2} \diamond W')) \diamond W'' = w_{1} ((w_{2} \diamond W') \diamond W'')&
\end{eqnarray*}
and
\begin{eqnarray*}
&W \diamond (W' \diamond W'') = (w_{1}w_{2}) \diamond (W' \diamond W'')= w_{1}(w_{2} \diamond (W' \diamond W'')).&
\end{eqnarray*}
Thus $(W \diamond W') \diamond W'' = W \diamond (W' \diamond W'')$ holds if and only if $
(w_{2} \diamond W' ) \diamond W'' = w_{2} \diamond (W' \diamond W'')$ holds. Therefore this case is reduced to the case when $b(W)=1$ in Subcase (iv).
\item[(iii)]
{\bf Suppose $b(W'') \geq 2$:} Then $W'' = w_{1}''w_{2}''$ with $w_{2}'' \in \AW$, $b(w_{1}'')=1$ and either $t(w_{1}'') \neq h(w_{2}'')$ or $t(w_{1}'') = h(w_{2}'') =0$.  Similarly to the case when $b(W) \geq 2$, we obtain $(W \diamond W') \diamond W'' = W \diamond (W' \diamond W'')$ holds if and only if $
(W \diamond W' ) \diamond w_{1}'' = W \diamond (W' \diamond w_{1}'')$ holds. Thus this case is also reduced to the case when $b(W'')=1$ in Subcase (iv).

\smallskip
In summary we have reduced the proof of Case III to the proof of the following special case:

\item[(iv)]
{\bf Suppose $b(W) = b(W') = b(W'') =1$:} Then all the three words are in $\lfloor \widetilde{\AW}^{+} \rfloor$.
Then $W = \lc w \rc^{(r)}, W' = \lc w' \rc^{(s)}, W'' = \lc w'' \rc^{(t)}$, where $r, s, t\geq 1$, $w, w', w'' \in \AW^{+}$ and $\dpt(W) =k+1$.
We have
\begin{eqnarray*}
&(W \diamond W') \diamond W'' = (\lc w \diamond \lc w' \rc \rc^{(r+s-1)}) \diamond \lc w'' \rc^{(t)} = \lc (w \diamond \lc w' \rc) \diamond \lc w'' \rc \rc^{(r+s+t-2)},&
\end{eqnarray*}
\begin{eqnarray*}
&W \diamond (W' \diamond W'') = \lc w \rc^{(r)} \diamond  (\lc w' \diamond \lc w'' \rc \rc^{(s+t-1)})= \lfloor  w \diamond (\lfloor w' \diamond \lfloor w'' \rfloor \rfloor) \rc^{(r+s+t-2)}.&
\end{eqnarray*}
Since $\dpt(w) = k+1-s$, by the induction hypothesis we have
$$
(w \diamond \lfloor w' \rfloor) \diamond \lfloor w'' \rfloor   = w \diamond (\lfloor w' \rfloor  \diamond \lfloor w'' \rfloor) = w \diamond (\lfloor w' \diamond \lfloor w'' \rfloor \rfloor)
$$
and then $(W \diamond W')\diamond W'' = W \diamond (W' \diamond W'')$.
\end{enumerate}
This completes the inductive proof of Eq.~(\mref{eq:asso}).
\smallskip

\noindent
(\mref{it:ave})  We only need to verify the equations
\begin{equation}
P_X(u)\diamond P_X(v)  =P_X (P_X(u) \diamond v),\quad P_X(u)\diamond P_X(v) = P_X(u\diamond P_X(v)) \quad \text{ for all } u,v \in \AW.
\mlabel{eq:op}
\end{equation}
We first recall by Remark~\mref{rem:stand} that for any $u,v \in \AW$, there exist unique $u', v' \in \AW^{+}$ and $s, t\geq 1$ such that \begin{equation}
P_X(u) = \lc u' \rc^{(s)}, P_X(v) = \lc v' \rc^{(t)}
\mlabel{eq:pow}
\end{equation}
Then we have
\begin{equation}
P_X(u) \diamond P_X(v) = \lc u' \rc^{(s)} \diamond \lc v' \rc^{(t)} = \lc u' \diamond \lc v' \rc  \rc^{(s+t-1)}.
\mlabel{eq:pp}
\end{equation}

We verify the first equation in Eq.~(\mref{eq:op}) by considering the two cases when $h(v)=0, 1$.
\smallskip

\noindent
{\bf Case 1. Suppose $h(v) =0$:} When $t(v) =0$, we have $v \in \AW^{+}$. Then $P_X(v) = \lc v \rc=\lc v'\rc^{(t)}$ in Eq.~(\mref{eq:pow}) with $v'=v$ and $t=1$. When $t(v) =1$, we rewrite $v$ as $v_{1} \lc v_{2}\rc^{(\ell)}$, where $v_{2} \in \AW^{+}$. Then $P_X(v) = \lc v_{1} \lc v_{2}\rc \rc^{(\ell)}=\lc v'\rc^{(t)}$ with $v'  = v_{1} \lc v_{2} \rc$ and $t =\ell$. Then by Eq.~(\mref{eq:pp}), we have
\begin{equation*}
P_X(P_X(u) \diamond v) = P_X(\lc u' \rc^{(s)} v) =\left\{\begin{array}{ll} \lc u' \diamond \lc v \rc \rc^{(s)}, & t(v)=0 \\ \lc u' \diamond \lc v_{1} \lc v_{2} \rc  \rc \rc^{(s+\ell-1)}, & t(v)=1
\end{array}\right. = \lc u' \diamond \lc v' \rc  \rc^{(s+t-1)} =  P_X(u) \diamond P_X(v).
\end{equation*}

\noindent
{\bf Case 2. Suppose $h(v) =1$:} When $t(v)=0$, we rewrite $v$ as $\lc v_{1} \rc^{(\ell)}v_{2}$, where $v_{1} \in \AW^{+}$. Then $P_X(v) = \lc v_{1} \diamond \lc v_{2} \rc \rc^{(\ell)}=\lc v'\rc^{(t)}$ with $v' = v_{1} \diamond \lc v_{2} \rc$ and $t = \ell$. When $t(v)=1$, we rewrite $v$  as $\lc v_{1} \rc^{(\ell)} v_{2} \lc v_{3} \rc^{(q)}$. Then $P_X(v) = \lc v_{1} \diamond \lc v_{2} \lc v_{3} \rc  \rc  \rc^{(\ell + q-1)}=\lc v'\rc^{(t)}$ with $v' = v_{1} \diamond \lc v_{2} \lc v_{3} \rc  \rc $ and $t = \ell +q -1$. Then by Eq.~(\mref{eq:pp}), we have
\allowdisplaybreaks
    \begin{eqnarray*}
   P_X(P_X(u) \diamond v) &=& P_X(\lc u' \rc^{(s)} \diamond v)\\
   & =& \left\{
    \begin{array}{ll} P_X(\lc u' \diamond \lc v_{1} \rc \rc^{(s+\ell-1)} v_{2} ),& t(v)=0\\
    P_X(\lc u' \diamond \lc v_{1} \rc \rc^{(s+\ell-1)} v_{2}\lc v_{3} \rc^{(q)} ), & t(v)=1 \end{array}\right.\\
    &=& \left\{
    \begin{array}{ll}\lc (u' \diamond \lc v_{1} \rc) \diamond \lc v_{2}\rc \rc^{(s+\ell-1)}, & t(v)=0 \\
    \lc(u' \diamond \lc v_{1} \rc) \diamond \lc v_{2} \lc v_{3} \rc  \rc \rc^{(s+\ell+q-2)}, & t(v)=1 \end{array}\right. \\
&=&\left\{
    \begin{array}{ll} \lc u' \diamond \lc v_{1} \diamond \lc v_{2}\rc \rc \rc^{(s+\ell-1)}, & t(v)=0 \\
    \lc u' \diamond \lc v_{1} \diamond \lc v_{2} \lc v_{3} \rc \rc  \rc \rc^{(s+\ell+q-2)}, & t(v)=1\end{array}\right.\\
     &=& \lc u' \diamond \lc v' \rc  \rc^{(s+t-1)}\\
      &=& P_X(u) \diamond P_X(v).
    \end{eqnarray*}
Thus the first equation in Eq.~(\mref{eq:op}) is verified.

To verify the second equation in Eq.~(\mref{eq:op}), we also consider the two cases when $h(u)$ is $0$ and $1$.
\smallskip

\noindent
{\bf Case 1. Suppose $h(u) =0$:} When $t(u) =0$, we have $u \in \AW^{+}$. So $P_X(u) = \lc u \rc=\lc u'\rc^{(s)}$ as in Eq.~(\mref{eq:pow}) with  $u'=u$ and $s=1$. When $t(u) =1$, we rewrite $u$ as $u_{1} \lc u_{2}\rc^{(\ell)}$, where $u_{2} \in \AW^{+}$. Then $P_X(u) = \lc u_{1} \lc u_{2}\rc \rc^{(\ell)}=\lc u'\rc ^s$ with $u'  = u_{1} \lc u_{2} \rc, s =\ell$. Then by Eq.~(\mref{eq:pp}) we have

\begin{eqnarray*}
P_X(u \diamond P_X(v)) &=& P_X(u \lc v' \rc^{(t)}) \\ &=&\left\{\begin{array}{ll} \lc u \diamond \lc v' \rc \rc^{(t)}, & t(u)=0 \\ P_X( u_{1} \lc u_{2} \diamond \lc v' \rc \rc^{(\ell+t-1)}), &t(u)=1
\end{array}\right. \\
&=& \left\{\begin{array}{ll} \lc u \diamond \lc v' \rc \rc^{(t)}, &t(u)=0 \\ \lc  u_{1} \lc u_{2} \diamond \lc v' \rc \rc \rc^{(\ell+t-1)}, &t(u)=1
\end{array}\right.\\
&=& \left\{\begin{array}{ll} \lc u \diamond \lc v' \rc \rc^{(t)}, & t(u)=0 \\
\lc  u_{1} \lc u_{2} \rc \diamond \lc v' \rc \rc^{(\ell+t-1)}, & t(u)=1
\end{array}\right. \\
&=&  \lc u' \diamond \lc v' \rc  \rc^{(s+t-1)} \\
&=&  P_X(u) \diamond P_X(v).
\end{eqnarray*}

\noindent
{\bf Case 2. Suppose $h(u) =1$:} When $t(u)=0$, we rewrite $u$ as $\lc u_{1} \rc^{(\ell)}u_{2}$, $u_{1} \in \AW^{+}$. Then $P_X(u) = \lc u_{1} \diamond \lc u_{2} \rc \rc^{(\ell)}=\lc u'\rc^{(s)}$ as in Eq.~(\mref{eq:pow}) with $u' = u_{1} \diamond \lc u_{2} \rc$ and $s = \ell$. When $t(u)=1$, we rewrite $u$  as $\lc u_{1} \rc^{(\ell)} u_{2} \lc u_{3} \rc^{(q)}$. Then $P_X(u) = \lc u_{1} \diamond \lc u_{2} \lc u_{3} \rc  \rc  \rc^{(\ell + q-1)}=\lc u'\rc^{(s)}$ with $u' = u_{1} \diamond \lc u_{2} \lc u_{3} \rc  \rc$ and $s = \ell +q -1$. Then we have
    \begin{eqnarray*}
    P_X(u \diamond P_X(v)) &=& P_X(u \diamond \lc v' \rc^{(t)})\\
    & =& \left\{
    \begin{array}{ll} P_X(\lc u_{1} \rc^{(\ell)}u_{2} \lc v' \rc^{(t)}), & t(u)=0\\
    P_X(\lc u_{1} \rc^{(\ell)}u_{2}\lc u_{3} \diamond \lc v' \rc  \rc^{(q+t-1)}), & t(u)=1\end{array}\right. \\
    &=& \left\{
    \begin{array}{ll} \lc u_{1} \diamond \lc u_{2} \lc v' \rc \rc^{(\ell+t-1)}, & t(u)=0 \\
    \lc  u_{1} \diamond \lc u_{2} \lc u_{3} \diamond \lc v' \rc  \rc   \rc    \rc^{(\ell+q+t-2)}, & t(u)=1 \end{array}\right.\\
    &=& \left\{\begin{array}{ll} \lc (u_{1} \diamond \lc u_{2}\rc) \diamond \lc v' \rc \rc^{(\ell+t-1)}, &t(u)=0 \\
\lc (u_{1} \diamond \lc u_{2} \lc u_{3} \rc  \rc)  \diamond \lc v' \rc   \rc^{(\ell+q+t-1)}, & t(u)=1 \end{array} \right. \\
&=& \lc u' \diamond \lc v' \rc  \rc^{(s+t-1)} \\
&=&  P_X(u) \diamond P_X(v).
    \end{eqnarray*}
This completes the proof of the second equation in Eq.~(\mref{eq:op}). Therefore $P_X$ is an averaging operator.
\smallskip

\noindent
(\mref{it:free})  Let $(B,Q)$ be an averaging algebra and $\ast$ be the product in $B$. Let $f: X \longrightarrow B$ be a map. We will construct a $\bfk$-linear map $\bar{f}: \bfk \AW \longrightarrow B$ by defining $\bar{f}(w)$ for $w \in \AW$. We achieve this by applying the induction
on $n$ for $w \in \AW_{n}$. For $w=x_{1}x_{2}\cdots x_{m}\in \AW_{0} = S(X)$, where $x_i\in X, 1\leq i\leq m$, define $\bar{f}(w) = f(x_{1})\ast f(x_{2}) \ast \cdots \ast f(x_{m})$. Suppose $\bar{f}(w)$ has been defined for $w \in \AW_{n}$ and consider $w \in \AW_{n+1}$ which is defined by

$$
\AW_{n+1} =\big(\sqcup_{r \geq 1}(\AW_{0} \lfloor \widetilde{\AW}_{n}^{+} \rfloor)^{r}\big) \sqcup \big(\sqcup_{r \geq 1}(\lfloor \widetilde{\AW}_{n}^{+} \rfloor \AW_{0})^{r}\big) \sqcup \big(\sqcup_{r \geq 0}(\AW_{0} \lfloor \widetilde{\AW}_{n}^{+} \rfloor)^{r}\AW_{0}\big) \sqcup \big(\sqcup_{r \geq 0}(\lfloor \widetilde{\AW}_{n}^{+} \rfloor \AW_{0})^{r} \lfloor \widetilde{\AW}_{n}^{+} \rfloor\big).
$$
If $w$ is in $\sqcup_{r \geq 1}(\AW_{0} \lfloor \widetilde{\AW}_{n}^{+} \rfloor)^{r}$, then $
w = \displaystyle \prod_{i=1}^{r} (w_{2i-1} \lfloor w_{2i} \rfloor)$,
where $w_{2i-1} \in \AW_{0}$ and $w_{2i} \in \widetilde{\AW}_{n}^{+}$. By the construction of the multiplication $\diamond$
and the averaging operator $P_X$, we also can express it by
$$
w = \diamond_{i=1}^{r} (w_{2i-1} \diamond P_X(w_{2i})).
$$
Thus there is only one possible way to define $\bar{f}(w)$ in order for $\bar{f}$ to be an averaging
homomorphism:
\begin{equation}\mlabel{eq:freemap}
\bar{f}(w) = \ast_{i=1}^{r} (\bar{f}(w_{2i-1}) \ast Q(\bar{f}(w_{2i}))).
\end{equation}
$\bar{f}(w)$ can be similarly defined if $w$ is in the other unions. This proves the existence of $f$ as a map and its uniqueness.

We next prove that the map $\bar{f}$ defined in Eq.(\ref{eq:freemap}) is indeed an averaging algebra
homomorphism. We will first show that $\bar{f}$ is an algebra homomorphism, that is, for any $W, W' \in  \AW$,
\begin{equation}\mlabel{eq:homo}
\bar{f}(W \diamond W') = \bar{f}(W) \ast \bar{f}(W').
\end{equation}
We will prove Eq. (\mref{eq:homo}) by induction on $\mathbf{b}:= b(W)+b(W')\geq 2.$

If $b(W)+b(W')=2$, then $b(W) = b(W') =1$. We then prove Eq.~(\mref{eq:homo}) by induction on $\dpt(W)$. When $\dpt(W)=0$, that is $W \in S(X)$. Then $W \diamond W'$ is the concatenation and, by definition,
$$
\bar{f}(W \diamond W') = \bar{f}(W) \ast \bar{f}(W').
$$
Suppose Eq. (\mref{eq:homo}) holds for $W, W'$ with $0 \leq \dpt(W) \leq k$ and consider $W,W' \in \AW$ with $\dpt(W) = k+1$. If $\dpt(W')=0$, a similar argument as in the case of $\dpt(W) =0$ proves Eq. (\mref{eq:homo}). If $\dpt(W')>0$, then $W = \lc w \rc^{(s)}$ and $W' = \lc w' \rc^{(t)}$ with $w, w' \in \AW^{+}$. Hence
\begin{eqnarray*}
\bar{f}(W \diamond W') &=& \bar{f}(\lc w \diamond \lc w' \rc \rc^{(s+t-1)}) \\
&=& Q^{s+t-1} (\bar{f}(w \diamond \lc w' \rc)) \quad \mbox{(by definition)} \\
&=& Q^{s+t-1} (\bar{f}(w) \ast \bar{f}(\lfloor w' \rfloor))) \quad \mbox{(by induction hypothesis)} \\
&=& Q^{s+t-1}(\bar{f}(w) \ast Q (\bar{f}(w'))) \quad \mbox{(by definition)} \\
&=&Q^{s}(\bar{f}(w)) \ast Q^{t}(\bar{f}(w'))  \quad \mbox{($Q$ is an averaging operator)} \\
&=& \bar{f}(\lfloor w \rc^{(s)})) \ast \bar{f}(\lfloor w' \rc^{(t)})) \quad \mbox{(by definition)}\\
&=& \bar{f}(W) \ast \bar{f}(W').
\end{eqnarray*}

Now assume that Eq. (\ref{eq:homo}) holds
for all $W,W'\in \AW$ with $2 \leq \mathbf{b} \leq k$. Consider $W, W'\in \AW$ with $\mathbf{b} = k + 1$. If $t(W) \neq h(W')$ or $t(W) = h(W')=0$, then
$W \diamond W'$ is the concatenation. So we have
$$
\bar{f}(W \diamond W') = \bar{f}(WW') = \bar{f}(W) \ast \bar{f}(W')
$$
by Eq.~(\mref{eq:freemap}). Now let $t(W) = h(W')=1$. Since $k \geq 2$, we have $k + 1 \geq 3$, so at least one of $W$ and
$W'$ have breadth $\geq 2$. Without loss of generality, we can assume $b(W) \geq 2$. Then $W$ can be written as $w_{1}w_{2}$ with $b(w_{2}) = 1$. Thus we have $ t(w_{2})  = t(W) = h(W')=1$, $t(w_{1}) =0, h(w_{2}) =1$, and $b(w_{2})+b(W') < b(W)+b(W') = k+1$. Applying the associativity of the product and the induction
hypothesis, we have
\begin{eqnarray*}
\bar{f}(W \diamond W') &=& \bar{f}((w_{1}w_{2}) \diamond W') \\
&=& \bar{f}(w_{1} (w_{2} \diamond W'))\\
&=& \bar{f}(w_{1} \diamond (w_{2} \diamond W'))\\
&=& \bar{f}(w_{1}) \ast \bar{f}(w_{2} \diamond W') \quad (t(w_{1}) \neq h(w_{2})) \\
&=& \bar{f}(w_{1}) \ast (\bar{f}(w_{2}) \ast \bar{f} (W')) \quad(\mbox{by induction hypothesis})\\
&=& (\bar{f}(w_{1}) \ast \bar{f}(w_{2})) \ast \bar{f} (W') \\
&=& \bar{f} (w_{1} \diamond w_{2}) \ast \bar{f}(W')  \quad (\dpt(w_{1})+\dpt(w_{2}) \leq k )\\
&=&  \bar{f} (W) \ast \bar{f} (W').
\end{eqnarray*}

Now we are left to show that, for any word $w \in \AW$, we have
\begin{equation}\mlabel{eq:hom}
\bar{f}(P_X(w)) = Q(\bar{f}(w)).
\end{equation}

Corresponding to the four components of the disjoint union decomposition in Eq.~(\mref{eq:odec2}), we have the following four cases to consider.

If $w$ is in $\widetilde{\AW}^{+}$, then by definition $P_X(w) = \lc w \rc \in \AW$. By definition (Eq. (\ref{eq:freemap})) we have
\begin{equation*}
\bar{f}(P_X(w)) = \bar{f}(\lfloor w \rfloor) = Q(\bar{f}(w)).
\end{equation*}

If $w$ is in $\sqcup_{r \geq 1}(\AW_{0} \lfloor \widetilde{\AW}_{n-1}^{+} \rfloor)^{r-1} \AW_{0}\lc  \widetilde{\AW}^{+}_{n-2} \rc^{(2)}$, then write $w = w_{1}\lc w_{2} \rc^{(s)}, s \geq 2$ with $w_{2} \in \AW^{+}$. Then $P_X(w)= \lc w_{1} \lc w_{2} \rc \rc^{(s)}$. We have
\begin{equation*}
\bar{f}(P_X(w)) = \bar{f} (  \lc w_{1} \lc w_{2} \rc \rc^{(s)}) =  Q^{s} (\bar{f}   (w_{1} \lc w_{2} \rc   )) = Q^{s} (\bar{f}(w_{1}) \ast  Q(\bar{f}(\lc w_{2}\rc) ).
\end{equation*}
Since $Q$ is an averaging operator, we further have
$$\bar{f}(P_X(w)) =Q(\bar{f}(w_{1}) \ast Q^{s}(\bar{f}(w_{2}))) =Q (\bar{f}(w_{1}) \ast \bar{f}(\lc w_{2} \rc^{(s)})) = Q(\bar{f}(w)).
$$

If $w$ is in $\sqcup_{r \geq 1}(\lc \widetilde{\AW}^{+} \rc  \AW_{0} )^{r}$, then write $w = \lc w_{1} \rc^{(s)} w_{2}$ with $w_{1} \in \AW^{+}$. Then by definition $P_X(w)=\lc w_{1} \diamond \lc w_{2} \rc \rc^{(s)}$. Thus
\begin{equation*}
\bar{f}(P_X(w)) = \bar{f} ( \lc w_{1}  \diamond \lc w_{2} \rc \rc^{(s)})= Q^{s} (\bar{f} ( w_{1}  \diamond \lc w_{2} \rc  ) ) = Q^{s} (\bar{f}(w_{1}) \ast  \bar{f}( \lfloor w_{2} \rfloor )  )
= Q^{s} (\bar{f}(w_{1}) \ast Q (\bar{f}(w_{2}))).
\end{equation*}
Since $Q$ is an averaging operator, we further have
$$\bar{f}(P_X(w))= Q ( Q^{s}( \bar{f} (w_{1}) ) \ast \bar{f}(w_{2}))= Q (\bar{f}(\lfloor w_{1} \rc^{(s)} w_{2} ) ) =Q (\bar{f}(w)).
$$

If $w$ is in $\sqcup_{r \geq 1}(\lfloor \widetilde{\AW}^{+} \rfloor \AW_{0})^{r} \lfloor \widetilde{\AW}^{+} \rfloor$, then write $w = \lc w_{1} \rc^{(s)}w_{2} \lc w_{3}\rc^{(t)}$ with $w_{1},w_{2},w_{3} \in \AW^{+}$. Then $P_X(w) = \lc w_{1} \diamond \lc w_{2} \lc w_{3} \rc \rc  \rc^{(s+t-1)}$. We have
\begin{eqnarray*}
\bar{f}(P_X(w)) &=& \bar{f} (\lc w_{1} \diamond \lc w_{2} \lc w_{3} \rc \rc  \rc^{(s+t-1)}) \\
&=& Q^{s+t-1} (\bar{f}(  w_{1} \diamond \lc w_{2} \lc w_{3} \rc \rc     )   ) \\
&=&Q^{s+t-1}(\bar{f}(w_{1}) \ast  \bar{f}( \lc w_{2} \lc w_{3} \rc \rc     )  ) \\
&=& Q^{s+t-1} (\bar{f}(w_{1}) \ast   Q(\bar{f}(w_{2}\lc w_{3} \rc))  )\\
&=& Q^{s} (\bar{f}(w_{1}) \ast   Q^{t}(\bar{f}(w_{2}\lc w_{3} \rc))) \quad \mbox{($Q$ is an averaging operator)}\\
&=& Q^{s} (\bar{f}(w_{1}) \ast   Q^{t}(\bar{f}(w_{2}) \ast Q(\bar{f}(w_{3})))) \\
&=& Q^{s} (\bar{f}(w_{1}) \ast   Q\big(\bar{f}(w_{2}) \ast Q^{t}(\bar{f}(w_{3}))\big)) \quad \mbox{($Q$ is an averaging operator)}\\
&=& Q(Q^{s}(\bar{f}(w_{1})) \ast \big(\bar{f}(w_{2}) \ast Q^{t}(\bar{f}(w_{3}))\big)) \quad \mbox{($Q$ is an averaging operator)}\\
&=& Q (\bar{f}(\lc w_{1} \rc^{(s)}) \ast (\bar{f}(w_{2}) \ast \bar{f}(\lc w_{3} \rc^{(t)})))\\
&=&Q(\bar{f}(w)).
\end{eqnarray*}

Therefore $\bar{f}$ commutes with the averaging operator. This completes the proof of Theorem~\mref{thm:free}.(\mref{it:free}).

The proof of Theorem~\mref{thm:free} is now completed.

\section{Enumeration in averaging algebras and large Schr\"oder numbers}
\mlabel{sec:enu}
In this section, we study the enumeration and generating functions of free averaging algebras. We first give the generating function of averaging words in two variables parameterizing the number of appearances of the variable and the operator respectively. We then observe that the generating function in one variable for averaging words (resp. indecomposable averaging words) with one idempotent operator and one idempotent generator is twice (resp. $z$ times) the generating function of large Schr\"oder numbers. This motivates us to give two interpretations of large Schr\"oder numbers in terms of averaging words and a class of decorated rooted trees. As a result, we obtain a recursive formula for large Schr\"oder numbers.

\subsection{Enumeration of averaging words}
In this section, we restrict ourselves to the set of averaging words with one generator and one idempotent operator and then give some results on enumerations of this set. For an idempotent operator, $P$ is an averaging operator if and only if it is a Reynolds operator. Interestingly, in most applications of averaging algebras in physics (invariant theory and fluid dynamics), function spaces, Banach algebras, the operators are idempotent.

Under the condition that the operator $\lc\ \rc$ is idempotent, no two pairs of brackets can be immediately adjacent or nested in an averaging word. Enumerations of Rota-Baxter words are given in~\mcite{GS}. We will follow the similar notations and apply the similar method to solve the enumeration problem of averaging words.

For an averaging word $w$, an {\bf $x$-run} is any occurrence in $w$ of consecutive products of $x$ of maximal length.
%When $\nu$ is 1, they will often be suppressed.
Let $v$ be either
a positive integer or $\infty$ and let $\frakA_{v}$ be the subset of $\AW$ (including $\bf1$ \footnote{In order to consider $x$ to be an associate in Section \mref{sec:generating} , we add the trivial averaging word $\bf1$ to $\AW$.}) where the length of $x$-runs is $ \leq v$ with the convention that there is no restriction on $x$-runs when $v=\infty$. The number of balanced pairs of brackets (resp. of $x$) in an averaging word is called its {\bf degree} (resp. {\bf arity}). For $n \geq 1$, let $\frakA_{v}(n)$ denote the subset of $\frakA_{v}$ consisting of all averaging words of degree $n$. For $m \geq 1$, let $\frakA_{v}(n,m)$ denote the subset of $\frakA_{v}$ consisting of averaging words with degree $n$ and arity $m$. Moreover, for $1 \leq k \leq m$, we let $\frakA_{v}(n,m;k)$ be the subset of $\frakA_{v}(n,m)$ consisting of averaging words where the $m$ $x$'s are distributed into exactly $k$ $x$-runs.

Let $G(m,k,v)$ be the set of compositions of the integer $m$ into $k$ positive integer parts, with each part at most $v$ and let $g(m, k , v)$ be the size of this set ($v =\infty$ means there is no restrictions on the size of each part).   Then we have~\mcite{GS,MPA}
\begin{equation}\label{eq:finite}
G_{k,v}(t) := \sum_{m=1}^{\infty}g(m,k,v) t^{m} = t^{k} \left( \frac{1-t^{v}}{1-t} \right)^{k}
\end{equation}
and
\begin{equation}\label{eq:infinite}
G_{k,\infty}(t) :=\sum_{m=1}^{\infty}g(m,k,\infty) t^{m} = \left(\frac{t}{1-t} \right)^{k}.
\end{equation}
In particular, by Eq. (\mref{eq:finite}), we have $v = 1$ implies $G_{k,1}(t) = t^{k}$.

By the definition of $\frakA_{v} (n,m)$, we have the disjoint union
\begin{equation}\mlabel{eq:disjoint}
\frakA_{v}(n,m) = \bigsqcup_{k=1}^{m} \frakA_{v}(n,m;k).
\end{equation}
By the definition of $\frakA_{1}(n,m;k)$, we have that $\frakA_{1}(n,m;k) \neq \emptyset$ implies $m = k$. Then we have $\frakA_{1}(n,m;k) = \frakA_{1}(n,k)$. We define a map
\begin{equation}
\Phi_{v,n,m} : \frakA_{v}(n,m;k) \longrightarrow \frakA_{1}(n,k)
\end{equation}
by sending $w \in \frakA_{v}(n,m;k)$ to the averaging word $\Phi_{v,n,m}(w)$ in $\frakA_{1}(n,k)$ obtained by replacing each of the $x$-runs appearing in $w$ by a single $x$. This map is clearly surjective for each pair $(n,k)$. Further, each fiber (inverse image) of $\Phi_{v,n,m}$ has $g(m,k,n)$ elements, giving rise to a bijection
\begin{equation}
\Psi: \frakA_{v}(n,m;k) \longleftrightarrow  \frakA_{1}(n,k) \times G(m, k, v).
\end{equation}

Therefore by Eq.~(\mref{eq:disjoint}), the numbers $\fraka_{v}(n,m)$ of averaging words of degree $n$ and arity $m$ in the set $\frakA_{v}$ are given by
$$
\fraka_v(n,m)=\sum_{k=1}^{m} g(m,k,v) \fraka_{1}(n,k).
$$

We next determine the expressions of the generating functions $\frakA_{v}(z,t)$ of the number sequences $\fraka_{v}(n,m), n, m\geq 0,$ for $1 \leq v \leq \infty$.

\begin{theorem}
Let $1 \leq v \leq \infty$. The generating function $\frakA_{v}(z,t)$ for the
number $\fraka_{v}(n,m), n, m\geq 0,$ of averaging words is given by  $\frakA_{v}(z,t) = \frakA_{1} (z,G_{1,v}(t))$.
where $G_{1,v}$ is given by Eq. (\ref{eq:finite}) for finite $v$ and by Eq. (\ref{eq:infinite})
for infinite $v$.
\end{theorem}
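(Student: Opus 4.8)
The theorem asserts that the two-variable generating function $\frakA_v(z,t)$ is obtained from $\frakA_1(z,t)$ by the substitution $t \mapsto G_{1,v}(t)$, where $z$ tracks degree (number of balanced bracket pairs) and $t$ tracks arity (number of $x$'s). The strategy is to pass through the refined combinatorial data already set up in the preceding discussion: the bijection $\Psi: \frakA_{v}(n,m;k) \longleftrightarrow \frakA_{1}(n,k) \times G(m,k,v)$ and the disjoint union $\frakA_{v}(n,m) = \bigsqcup_{k=1}^{m} \frakA_{v}(n,m;k)$, which together give the coefficient-level identity $\fraka_v(n,m) = \sum_{k=1}^{m} g(m,k,v)\,\fraka_1(n,k)$. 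The whole proof is then a generating-function repackaging of this identity; no new combinatorics is needed.

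\smallskip

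First I would write out the definitions explicitly: $\frakA_v(z,t) = \sum_{n\geq 0}\sum_{m\geq 0} \fraka_v(n,m)\, z^n t^m$ and similarly for $\frakA_1$, and recall from Eq.~\eqref{eq:finite}/\eqref{eq:infinite} that $G_{1,v}(t) = \sum_{m\geq 1} g(m,1,v)\, t^m$. Then I would substitute the coefficient identity:
\begin{equation*}
\frakA_v(z,t) = \sum_{n}\sum_{m} \Big(\sum_{k=1}^{m} g(m,k,v)\,\fraka_1(n,k)\Big) z^n t^m = \sum_{n}\sum_{k\geq 0} \fraka_1(n,k)\, z^n \Big(\sum_{m\geq k} g(m,k,v)\, t^m\Big).
\end{equation*}
The inner sum over $m$ is exactly $G_{k,v}(t)$. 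The one remaining ingredient is the multiplicativity $G_{k,v}(t) = \big(G_{1,v}(t)\big)^k$, which is immediate from the product formulas in Eqs.~\eqref{eq:finite} and \eqref{eq:infinite} (or, combinatorially, from the fact that a composition of $m$ into $k$ parts each $\leq v$ is a concatenation of $k$ compositions into one part each $\leq v$). Substituting gives
\begin{equation*}
\frakA_v(z,t) = \sum_{n}\sum_{k\geq 0} \fraka_1(n,k)\, z^n \big(G_{1,v}(t)\big)^k = \frakA_1\big(z, G_{1,v}(t)\big),
\end{equation*}
which is the claim. One should also note the $k=0$, $m=0$ term (the trivial word $\bf 1$ adjoined to $\AW$) matches on both sides since $G_{1,v}(0)=0$ forces the constant terms to agree.

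\smallskip

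The only point requiring a little care — and the closest thing to an obstacle — is the bookkeeping of ranges and the legitimacy of interchanging the order of summation over $m$ and $k$. Since for each fixed degree $n$ there are only finitely many averaging words of arity $\leq m$, all coefficients $\fraka_v(n,m)$ and $\fraka_1(n,k)$ are finite, and for fixed $n$ the sum $\sum_k \fraka_1(n,k) (G_{1,v}(t))^k$ is a well-defined formal power series in $t$ because $G_{1,v}(t)$ has zero constant term (so only finitely many $k$ contribute to each power of $t$); hence the rearrangement is valid in the ring of formal power series. I would also double-check that the convention $\fraka_v(n,m;k)\neq\emptyset \Rightarrow k\leq m$ and $g(m,k,v)=0$ for $k>m$ make the upper limit $m$ in $\sum_{k=1}^m$ harmless when extended to $\sum_{k\geq 1}$. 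With these remarks the proof is essentially three lines of formal manipulation.
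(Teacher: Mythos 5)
Your proposal is correct and follows essentially the same route as the paper: substitute the coefficient identity $\fraka_v(n,m)=\sum_{k=1}^m g(m,k,v)\,\fraka_1(n,k)$ coming from the bijection $\Psi$, interchange the order of summation, and use $\sum_m g(m,k,v)t^m=G_{k,v}(t)=\bigl(G_{1,v}(t)\bigr)^k$ to recognize $\frakA_1\bigl(z,G_{1,v}(t)\bigr)$. The paper carries out exactly this computation (with the multiplicativity of $G_{k,v}$ used implicitly); your extra remarks on the formal-power-series rearrangement and the constant term are harmless additions, not a departure.
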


\begin{proof} We have
\begin{eqnarray*}
\frakA_{v}(z,t): &=& \sum_{n,m \geq 1} \fraka_{v}(n,m)z^{n}t^{m} \\
&=&\sum_{n,m \geq 1} \sum_{k=1}^{m} g(m,k,v) \fraka_{1}(n,k) z^{n}t^{m}  \\
&=& \sum_{m \geq 1} \left( \sum_{k \geq 1} g(m,k,v) \left(\sum_{n \geq 1}\fraka_{1}(n,k) z^{n} \right) \right)t^{m}\\
&=& \sum_{k \geq 1}  \left(\sum_{n \geq 1}\fraka_{1}(n,k) z^{n} \right) (G_{1,v}(t))^{k}\\
&=& \frakA_{1}(z,G_{1,v}(t)).
\end{eqnarray*}
\end{proof}

By Eq.~(\mref{eq:infinite}) we also have
\begin{coro}
We have the generating function
$$
\frakA_{\infty}(z,t) = \frakA_{1} \left(z, \frac{t}{1-t} \right).
$$
\end{coro}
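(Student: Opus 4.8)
The plan is to obtain this corollary as an immediate specialization of the theorem just proved, so the argument is very short. That theorem asserts $\frakA_v(z,t) = \frakA_1(z, G_{1,v}(t))$ for every $1 \le v \le \infty$; taking $v = \infty$ gives at once $\frakA_\infty(z,t) = \frakA_1(z, G_{1,\infty}(t))$, and it remains only to insert the closed form of $G_{1,\infty}(t)$.

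First I would invoke Eq.~(\ref{eq:infinite}), which states $G_{k,\infty}(t) = \left(\frac{t}{1-t}\right)^{k}$; setting $k = 1$ yields $G_{1,\infty}(t) = \frac{t}{1-t}$. Substituting this into the specialized identity produces $\frakA_\infty(z,t) = \frakA_1\!\left(z, \frac{t}{1-t}\right)$, which is exactly the assertion.

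The only point worth a word is that the substitution $t \mapsto \frac{t}{1-t}$ into $\frakA_1(z,t)$ is legitimate, and that the re-summation performed in the proof of the theorem survives at $v = \infty$. This is routine: $\fraka_1(n,m) = 0$ unless $m \ge 1$, so $\frakA_1(z,t)$ has zero constant term in $t$, while $\frac{t}{1-t} = \sum_{m \ge 1} t^{m}$ also has zero constant term, so the composition is a well-defined element of $\bfk[[z,t]]$; equivalently $g(m,k,\infty) = \binom{m-1}{k-1}$ is finite and only finitely many triples $(n,m,k)$ contribute to a fixed total degree, making the interchange of summations purely formal. Hence there is no genuine obstacle: the corollary follows directly from the theorem together with Eq.~(\ref{eq:infinite}).
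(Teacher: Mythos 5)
Your proposal is correct and is exactly the paper's route: the corollary is the $v=\infty$ specialization of the theorem $\frakA_{v}(z,t)=\frakA_{1}(z,G_{1,v}(t))$ combined with $G_{1,\infty}(t)=\frac{t}{1-t}$ from Eq.~(\ref{eq:infinite}). The extra remark on the formal legitimacy of the substitution is a harmless (and reasonable) addition, but no different idea is involved.
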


Now we have reduced the problem of finding the explicit expression of the generating function for $\fraka_{\infty}(n,m)$ to the problem of finding $\frakA_{1}(z,t)$, to be considered in the next subsection.

\subsection{The generating function $\frakA_{1}(z,t)$ }\mlabel{sec:generating}
In this section, we will focus on the generating function $\frakA_{1}(z,t)$ for $\fraka_{1}(n,m)$. First, we give some descriptions of the word structures of averaging words. Note that $\frakA_{1}$ is the subset consisting of ${\bf 1}$ and averaging words $w$ composed of $x$'s and pairs of balanced brackets such that no two $x$'s are adjacent, and no two pairs of brackets can be immediately adjacent or nested. This special case can be considered as the case where we assume $X = \{x\}$, $x^{2} = x$ and $\lfloor\ \rfloor^{2} = \lfloor\ \rfloor$.

{\it In the rest of this section, all averaging words are assumed to be in $\frakA_{1}$.}

For $n >0$, let $B(n)$ be the subset of $\frakA_{1}(n)$ consisting of averaging words that begin with a left bracket and end with a right bracket and words in $B(n)$ are said to be {\bf bracketed}. By pre- or post-concatenating a bracketed averaging word $w$ with $x$, we
get three new averaging words: $xw$, $wx$, and $xwx$, which are called respectively the {\bf left,
right and bilateral associate} of $w$. Moreover, left,
right and bilateral associate of $w$ are collectively referred to as the {\bf associates} of $w$. We also consider $x$ to be an associate
of the trivial word ${\bf 1}$. Any nontrivial averaging word is either bracketed or an associate.
Thus for $n > 0$, the set $C(n)$ of all associates is the disjoint union
$$
C(n) = xB(n) \sqcup B(n)x \sqcup xB(n)x, \quad n > 0.
$$
and forms the complement of $B(n)$ in $\frakA_{1}(n)$. Define $C^{-}(n) := B(n)x$. We call the words in the subset $xB(n) \sqcup xB(n)x$ {\bf admissible}. The set of bracketed averaging words is further divided into two disjoint subsets. The first
subset $I(n)$ consists of all indecomposable bracketed averaging words whose beginning
left bracket and ending right bracket are paired, like $\lc x\lc x\rc\rc$. The second subset $D(n)$
consists of all decomposable bracketed averaging words whose beginning left bracket
and ending right bracket are not paired, like $\lc x\rc x\lc x\rc$. For the convenience in counting, we define
$B(0), I(0), D(0)$ to be the empty set and note that $C(0)=\{x\}$. With our convention, denote $\frakA(0) := \frakA_{1}(0) = \{{\bf 1},x\}$.

The following table lists these various types of averaging words in lower degrees
\begin{table}[hbtp] \begin{center}\begin{tabular}{|c|c|c|c|c|c|} \hline $\rm deg$ & $\rm I(n)$ & $\rm D(n)$ & $\rm C(n)$& $ \rm C^{-}(n)$&$ \rm B(n)$ \\ \hline
$0$& & & $x$ & & \\ \hline
$1$& $\lfloor x \rfloor$& & $x \lfloor x \rfloor, \lfloor x \rfloor x , x \lfloor x \rfloor x $&  $\lfloor x \rfloor x$ & $\lfloor x \rfloor $ \\ \hline
$2$ & $ \lfloor x \lfloor x \rfloor \rfloor, \lfloor x \lfloor x \rfloor x \rfloor$ &$\lfloor x \rfloor x \lfloor x \rfloor$ & $9$ associates & ${\rm B(2)}x$ & $\rm I(2) \cup D(2)$\\ \hline
\end{tabular}
\end{center}\label{exampletable}
\end{table}

The production rules will be
\begin{eqnarray}
\langle AW \rangle &\longrightarrow& {\bf 1}~ |~ \langle bracketed \rangle ~ |~ \langle associate \rangle \mlabel{rule1}\\
\langle associate \rangle &\longrightarrow & x ~|~ x \langle bracketed \rangle~|~ \langle bracketed \rangle x ~|~ x \langle bracketed \rangle x \mlabel{rule2}\\
\langle bracketed \rangle &\longrightarrow & ~  \langle indecomposable \rangle ~|~ \langle decomposable \rangle \mlabel{rule3}\\
\langle indecomposable \rangle &\longrightarrow& ~ \lfloor \langle admissable \rangle \rfloor \mlabel{rule4}\\
\langle  decomposable \rangle &\longrightarrow & ~ \langle bracketed \rangle x \langle bracketed \rangle \mlabel{rule5}
\end{eqnarray}

An averaging word $w$ has
arity $m$ means the number of occurrences of $x$ in $w$ is $m$. For any $m \geq 0$, let $\frakA(n,m)$ be the subset of $\frakA$ consisting of words with degree $n$ and arity
$m$, and define similarly the notations $C(n,m)$, $C^{-}(n,m)$, $B(n,m)$, $I(n,m)$, and $D(n,m)$.
These are all finite sets. Let their sizes be $\fraka_{n,m}$, $c_{n,m}$, $c^{-}_{n,m}$,
$b_{n,m}$, $i_{n,m}$, and $d_{n,m}$ respectively. For initial values, we have
\begin{eqnarray*}
\fraka_{0,0} = 1; && c_{0,0} = b_{0,0} = i_{0,0} = d_{0,0} = 0;\\
\fraka_{0,1} = c_{0,1} = 1; && b_{0,1} = i_{0,1} = d_{0,1} = 0;\\
\fraka_{1,1} = b_{1,1} = i_{1,1} = 1;  && c_{1,1} = d_{1,1} = 0;\\
\fraka_{1,2} = c_{1,2} = 2; && b_{1,2} = i_{1,2} = d_{1,2} = 0;\\
\fraka_{1,3} = c_{1,3} = 1; && b_{1,3} = i_{1,3} = d_{1,3} = 0;
\end{eqnarray*}
\begin{eqnarray*}
\fraka_{0,m} = c_{0,m} = b_{0,m} = i_{0,m} = d_{0,m} = 0 && \mbox{for}~ m \geq 2;\\
\fraka_{1,m} = c_{1,m} = b_{1,m} = i_{1,m} = d_{1,m} = 0 && \mbox{for}~ m \geq 4;\\
\fraka_{n,0} = c_{n,0} = b_{n,0} = i_{n,0} = d_{n,0} = 0 && \mbox{for}~ n \geq 1;\\
\fraka_{n,1} = c_{n,1} = b_{n,1} = i_{n,1} = d_{n,1} = 0 && \mbox{for}~ n \geq 2.
\end{eqnarray*}

From the production rules (\ref{rule1})--(\ref{rule5}), we see that for $n \geq 1, m \geq 2$,
\begin{eqnarray}
\fraka_{n,m} &=& b_{n,m} + c_{n,m}, \mlabel{eq:formula1}\\
c_{n,m} &=& 2 b_{n,m-1} + b_{n,m-2}, \mlabel{eq:formula2}\\
b_{n,m} &=& i_{n,m} + d_{n,m}, \mlabel{eq:formula3}\\
i_{n,m} &=& c_{n-1,m} - c^{-}_{n-1,m} = c_{n-1,m} - b_{n-1,m-1}, \mlabel{eq:formula4}
\end{eqnarray}
where Eq.~(\ref{eq:formula4}) follows from Eq.~(\ref{rule4}) and $C^{-}(n) = B(n)x$.

Now for $n \geq 2$, $m \geq 2$ and $w \in D(n,m)$, we can write $w$ uniquely as
$w_{n_{1}}xw_{n_{2}} \cdots x w_{n_{p}}$ where $w_{n_{j}} \in I(n_{j})$ and $n_{1}+\cdots +n_{p}$ is a composition of $n$ using
$p$ positive integers. Let $m_{j}$ be the arity of $w_{n_{j}}$. Then clearly, $m_{1} + \cdots +m_{p} =
m - p + 1$. So we have
\begin{equation}
d_{n,m} =\sum^{min(n,m)}_{p=2} \sum_{(m_{1},\cdots,m_{p};m-p+1)} \sum_{(n_{1},\cdots,n_{p};n)}(i_{n_{1},m_{1}}) \cdots (i_{n_{p},m_{p}}).
\end{equation}
The case when $p = 1$ corresponds to a single summand $i_{n,m}$, and then
\begin{equation} \mlabel{bracket2}
b_{n,m} =i_{n,m} + d_{n,m} =
\sum^{min(n,m)}_{p=1} \sum_{(m_{1},\cdots,m_{p};m-p+1)} \sum_{(n_{1},\cdots,n_{p};n)}(i_{n_{1},m_{1}}) \cdots (i_{n_{p},m_{p}}).
\end{equation}
Now from Eqs.(\ref{eq:formula2}) and (\ref{eq:formula4}), we have
\begin{equation}
i_{n,m} = c_{n-1,m} - b_{n-1,m-1} = 2 b_{n-1,m-1} + b_{n-1,m-2} - b_{n-1,m-1}= b_{n-1,m-1} + b_{n-1,m-2}.
\mlabel{relation2}
\end{equation}
Define the bivariate generating series
$$
\frakA(z, t) = \sum_{n=0}^{\infty} \sum_{m=0}^{\infty} \fraka_{n,m}z^{n} t^{m}
$$
and similarly define $B(z, t)$, $I(z, t)$, $D(z, t)$, and $C(z, t)$. Note that for $B(z, t)$,
$I(z, t)$ and $D(z, t)$, it does not matter whether the series indices $n,m $ start at $0$
or $1$. We will multiply both sides of Eq.(\ref{relation2}) by $z^{n}t^{m}$ and sum up for $n \geq 2$, $m \geq 2$. Since $i_{0,m}=i_{n,0}=0$ for all $m,n \geq 0$ and $i_{1,m} = i_{n,1}=0$ for all $m,n \geq 2$, then the left hand side gives
\begin{eqnarray*}
\sum_{n=2}^{\infty}\sum_{m=2}^{\infty}i_{n,m}z^{n}t^{m} = I(z, t) - z t.
\end{eqnarray*}
Now, we sum up the right hand side of Eq.(\ref{relation2}) one term at a time.
$$
\sum_{n=2}^{\infty} \sum_{m=2}^{\infty}
b_{n-1,m-1} z^{n}t^{m} = zt\sum_{n=1}^{\infty}\sum_{m=1}^{\infty}b_{n,m}z^{n}t^{m} =z t B(z,t),
$$
$$
\sum_{n=2}^{\infty}\sum_{m=2}^{\infty} b_{n-1,m-2} z^{n}t^{m} = z t^{2}\sum_{n=1}^{\infty}\sum_{m=0}^{\infty}b_{n,m} z^{n}t^{m}
= zt^{2} ( B(z, t) + \sum_{n=1}^{\infty}b_{n,0}z^{n}) =zt^{2} B(z, t).
$$
Hence, we have the identity
\begin{equation}
I(z, t) - zt = zt(1 + t) B(z, t).
\end{equation}

Using Eq.~(\ref{bracket2}), we have
\begin{eqnarray*}
\lefteqn{\sum_{n=1}^{\infty}\sum_{m=1}^{\infty} b_{n,m}z^{n}t^{m}= \sum_{n=1}^{\infty}\sum_{m=1}^{\infty} \sum_{p=1}^{\min (m,n)}  \sum_{(m_{1},\cdots,m_{p}; m-p+1)} \sum_{(n_{1},\cdots,n_{p};n)}
(i_{n_{1},m_{1}}) \cdots (i_{n_{p},m_{p}}) z^{n}t^{m}} \\
&=& \sum_{n=1}^{\infty}\sum_{m=1}^{\infty} \sum_{p=1}^{\min (m,n)}  \sum_{(m_{1},\cdots,m_{p}; m-p+1)} \sum_{(n_{1},\cdots,n_{p};n)}
(i_{n_{1},m_{1}} z^{n_{1}} t^{m_{1}}) \cdots (i_{n_{p},m_{p}} z^{n_{p}} t^{m_{p}}) z^{n}t^{m} \\
&=& \sum_{p=1}^{\infty} \left(  \sum_{k=1}^{\infty} \sum_{\ell =1}^{\infty}  i_{k,\ell} z^{k} t^{\ell} \right)^{p} t^{p-1}\\
&=& \sum_{p=1}^\infty I(z,t)^p t^{p-1}
\end{eqnarray*}
and hence
\begin{equation}
B(z,t) = \frac{I(z,t)}{ 1- tI(z,t)}\,.
\mlabel{eq:brack}
\end{equation}
Thus we obtained the identity defining $I(z,t)$ as
\begin{equation}
I(z, t) - zt = zt(1 + t)  \frac{I(z,t)}{ 1- tI(z,t)}.
\end{equation}
Solving this quadratic equation in $I(z,t)$ and using the initial conditions, we find
\begin{equation}
I(z,t) = \displaystyle \frac{1-zt - \sqrt{z^{2}t^{2} - (2t + 4t^{2}) z +1 }}{2t}\,.
\mlabel{eq:ind}
\end{equation}
Then by Eq.~(\mref{eq:brack}), we have
\begin{equation}
B(z,t) = \displaystyle \frac{1-zt -2zt^{2} - \sqrt{z^{2}t^{2} - (2t + 4t^{2}) z +1 }}{2zt^{2}(1+t)}.
\mlabel{eq:bracket}
\end{equation}
Furthermore, from Eq.~(\mref{bracket2}), we obtain
\begin{eqnarray}
D(z,t)
&=& B(z,t)-I(z,t)\mlabel{eq:dec}\\
&=&\displaystyle \frac{1-2zt -3zt^{2} + z^{2} t^{2} + z^{2}t^{3} + (zt + zt^{2} -1) \sqrt{z^{2}t^{2} - (2t + 4t^{2}) z +1 } } {2zt^{2}(1+t)}. \notag
\end{eqnarray}

We can also obtain the bivariate generating series for $c_{n,m}$:
\begin{eqnarray*}
C(z,t) &=&\sum_{n=0}^{\infty}(c_{n,0}z^{n} + c_{n,1}z^{n}t + \sum_{m=2}^{\infty}c_{n,m}z^{n}t^{m})\\
&=&  t + \sum_{m=2}^{\infty}c_{0,m}t^{m} +\sum_{n=1}^{\infty}\left( \sum_{m=2}^{\infty}c_{n,m}z^{n}t^{m}\right)\\
&=& t +\sum_{n=1}^{\infty}\sum_{m=2}^{\infty} (2 b_{n,m-1} + b_{n,m-2})z^{n}t^{m}\\
&=& t +\sum_{n=1}^{\infty}\left( t \sum_{m=1}^{\infty}2 b_{n,m}z^{n}t^{m} \right)+ t^{2}\sum_{n=1}^{\infty}\sum_{m=0}^{\infty}b_{n,m}z^{n}t^{m}\\
&=& t + 2tB(z, t) + t^{2} B(z, t),
\end{eqnarray*}
giving
\begin{equation}\mlabel{eq:associative}
A(z,t) = \frac{2+t - 2zt - 3zt^{2} - (2+t) \sqrt{z^{2}t^{2} - (2t+4t^{2}) z +1 }} {2zt(1+t)}
\end{equation}
by Eq.~(\mref{eq:bracket}).

Finally, utilizing $\frakA(z,t) = 1+B(z,t)+C(z,t)$,  Eqs.(\ref{eq:bracket}) and (\ref{eq:associative}) we derive the following generating function $\frakA_{1} (z,t) = \frakA(z,t)$ for the sequences $\fraka_{n,m} = \fraka_{1}(n,m)$.
\begin{equation}\mlabel{eq:double}
\frakA(z,t) = \frac{(1+t) (1 -zt - \sqrt{z^{2}t^{2} - (2t+4t^{2}) z +1 })} {2zt^{2}}.
\end{equation}

Recall that the averaging words in $\mathfrak{A}_{1}$ can be considered as the case where we take $X = \{x\}$, $x^{2} = x$ and $\lfloor\ \rfloor^{2} = \lfloor\ \rfloor$.
We conclude our discussion on generating functions by ignoring the arity and only focus on the degree. For $n > 0$, let $\fraka_{n}$ (resp. $c_{n}$, resp. $b_{n}$, resp. $i_{n}$, resp. $d_{n}$) be the number of all
(resp. associate, resp. bracketed, resp. indecomposable, resp. decomposable)
averaging words with $n$ pairs of (balanced) brackets.
For example, we have $\fraka_0=2$ since such averaging words with no operators are $\{\bfone, x\}$; while $\fraka_1=4$ since  those with one operators are $\{\,\lc x\rc, x\lc x\rc, \lc x\rc x, x\lc x\rc x\,\}.$

Putting $t=1$ in the generating functions of $\frakA(z,t), B(z,t), I(z,t), D(z,t)$ and $C(z,t)$, we obtain

\begin{theorem}\mlabel{thm:generation1}
The generating series for $\fraka_{n}$, $b_{n}$, $i_{n}$, $d_{n}$ and $c_{n}$ are given by
\begin{eqnarray}
\frakA(z) & = & \sum_{n=0}^{\infty} \fraka_{n} z^{n} =  \displaystyle \frac{1 - z - \sqrt{z^{2} - 6z +1}}{z},\notag\\
B(z) &=& \sum_{n=0}^{\infty} b_{n} z^{n} = \displaystyle  \frac{1-3z- \sqrt{z^{2} - 6z +1}}{4z}, \notag\\
I(z) &=& \sum_{n=0}^{\infty} i_{n} z^{n} = \displaystyle \frac{1-z - \sqrt{z^{2} -6z+1}}{2}, \mlabel{eq:generation1}\\
D(z) &=& \sum_{n=0}^{\infty} d_{n} z^{n} = \displaystyle \frac{1-5z +2z^{2} +(2z-1) \sqrt{z^{2} - 6z +1}}{4z},\notag \\
C(z) &=& \sum_{n=0}^{\infty} c_{n} z^{n} = \displaystyle \frac{3-5z - 3 \sqrt{z^{2} - 6z +1}}{4z}.\notag
\end{eqnarray}
\end{theorem}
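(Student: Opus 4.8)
The plan is to obtain all five one-variable series by simply specializing $t=1$ in the bivariate generating functions already computed in Eqs.~(\mref{eq:ind}), (\mref{eq:bracket}), (\mref{eq:dec}), (\mref{eq:associative}) and (\mref{eq:double}); the only substantive points are to justify that this specialization makes sense and then to perform the elementary algebraic simplification.

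First I would check that $t\mapsto 1$ is a legitimate operation on power series in $z$. The key observation is that the arity of an averaging word in $\frakA_1$ of degree $n$ is bounded: since no two $x$'s are adjacent and no two pairs of brackets are adjacent or nested, a short induction on $n$ using the block decomposition $w=\omega_1\cdots\omega_r$ of Eq.~(\mref{eq:block}) shows that the arity is at most $2n+1$ (if there are $k$ top-level bracket blocks $\lfloor u_i\rfloor$ of degrees $n_i$, with $\sum_i n_i=n-k$, then at most $k+1$ single-$x$ blocks are interspersed, so the arity is at most $(k+1)+\sum_i(2n_i+1)=2n+1$). Consequently, for each fixed $n$ only finitely many of $\fraka_{n,m}, b_{n,m}, i_{n,m}, d_{n,m}, c_{n,m}$ are nonzero, so the coefficient of $z^n$ in each of $\frakA(z,t), B(z,t), I(z,t), D(z,t), C(z,t)$ is a polynomial in $t$; evaluating this polynomial at $t=1$ gives exactly $\fraka_n, b_n, i_n, d_n, c_n$. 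Hence $\frakA(z)=\frakA(z,1)$, $B(z)=B(z,1)$, $I(z)=I(z,1)$, $D(z)=D(z,1)$ and $C(z)=C(z,1)$, and the theorem reduces to evaluating the closed forms at $t=1$.

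Next I would carry out the substitution. The discriminant $z^2t^2-(2t+4t^2)z+1$ becomes $z^2-6z+1$, and, exactly as in the bivariate case, the branch of the square root is the one with $\sqrt{z^2-6z+1}=1+O(z)$, forced by the initial data (e.g.\ $\fraka_0=2$, $i_1=b_1=1$). Then Eq.~(\mref{eq:ind}) gives $I(z)=\tfrac{1}{2}\bigl(1-z-\sqrt{z^2-6z+1}\bigr)$; Eq.~(\mref{eq:bracket}) gives $B(z)=\tfrac{1}{4z}\bigl(1-3z-\sqrt{z^2-6z+1}\bigr)$; Eq.~(\mref{eq:double}) gives $\frakA(z)=\tfrac{1}{z}\bigl(1-z-\sqrt{z^2-6z+1}\bigr)$; Eq.~(\mref{eq:dec}) (that is, $D=B-I$) gives $D(z)=\tfrac{1}{4z}\bigl(1-5z+2z^2+(2z-1)\sqrt{z^2-6z+1}\bigr)$; and Eq.~(\mref{eq:associative}) at $t=1$, equivalently $C=1+3B$ (or $\frakA=1+B+C$), gives $C(z)=\tfrac{1}{4z}\bigl(3-5z-3\sqrt{z^2-6z+1}\bigr)$. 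Each of these is a one-line computation.

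Finally I would verify internal consistency of the five formulas — $\frakA(z)=1+B(z)+C(z)$, $B(z)=I(z)+D(z)$, and $B(z)=I(z)/(1-I(z))$ obtained from Eq.~(\mref{eq:brack}) at $t=1$ (a quick rationalization of $\tfrac{1-z-\sqrt{z^2-6z+1}}{1+z+\sqrt{z^2-6z+1}}$ using $(1+z)^2-(z^2-6z+1)=8z$) — together with agreement with the low-degree values, e.g.\ $\frakA(z)=2+4z+\cdots$ and $I(z)=z+\cdots$, which also pin down the sign of the radical. I do not expect a genuine obstacle; the one step that requires a moment's care is the linear bound on the arity in terms of the degree, since this is precisely what makes the evaluation $t=1$ a well-defined operation rather than a merely formal one.
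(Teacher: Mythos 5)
Your proposal is correct and follows essentially the same route as the paper, whose entire proof is the one-line specialization $t=1$ in the bivariate series $\frakA(z,t)$, $B(z,t)$, $I(z,t)$, $D(z,t)$, $C(z,t)$ of Eqs.~(\ref{eq:ind})--(\ref{eq:double}). Your additional check that the arity of a degree-$n$ word in $\frakA_1$ is bounded (so each $z^n$-coefficient is a polynomial in $t$ and the evaluation at $t=1$ is legitimate), together with the sign-of-radical and consistency verifications, is a sound refinement of the same argument rather than a different one.
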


From the theorem we obtain
\begin{coro}
\begin{enumerate}
\item
The sequence $\fraka_n, n\geq 0$, of averaging words of degree $n$ with one idempotent generator and one idempotent operator, is twice the sequence $s_n$ of large Schr\"oder numbers:
$\fraka_n/2=s_n, n\geq 0$. The first few terms of $\fraka_n, n\geq 0,$ are
$$
2,4,12,44,180,788,3612,17116,\cdots.
$$
\item
The sequence $i_{n}, n\geq 1,$ of bracketed indecomposable averaging words of degree $n$ is the sequence $s_n, n\geq 0,$ of large  Schr\"oder numbers: $i_{n+1}=s_n, n\geq 0$. The first few terms of $i_n, n\geq 0,$ are
$$
0,1,2,6,22,90,394,1806,\cdots .
$$
\end{enumerate}
\mlabel{co:seq}
\end{coro}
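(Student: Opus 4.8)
The plan is to read off both identities directly from the closed-form generating functions in Theorem~\mref{thm:generation1}, by comparing them with the generating function of the large Schr\"oder numbers. Recall that the large Schr\"oder numbers $s_n$ (with $s_0=1$, $s_1=2$, $s_2=6,\dots$) have the well-known generating function
\begin{equation}
S(z):=\sum_{n\geq 0}s_nz^n=\frac{1-z-\sqrt{1-6z+z^2}}{2z},
\mlabel{eq:lsn}
\end{equation}
where $\sqrt{1-6z+z^2}=1-3z-\cdots$ denotes the branch with constant term $1$, so that the right-hand side of Eq.~(\mref{eq:lsn}) is a well-defined formal power series in $\ZZ[[z]]$.

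For part~(a), observe that $z^{2}-6z+1=1-6z+z^{2}$, so the expression for $\frakA(z)$ in Eq.~(\mref{eq:generation1}) can be rewritten as
$$
\frakA(z)=\frac{1-z-\sqrt{1-6z+z^{2}}}{z}=2\cdot\frac{1-z-\sqrt{1-6z+z^{2}}}{2z}=2S(z).
$$
The branch of the square root here is the one already fixed by the initial conditions when the quadratic equation for $I(z,t)$ was solved to produce Eq.~(\mref{eq:ind}); hence $\frakA(z)$ and $2S(z)$ coincide as formal power series, and comparing coefficients of $z^{n}$ gives $\fraka_n=2s_n$ for all $n\geq 0$. Expanding Eq.~(\mref{eq:lsn}) yields $s_n:1,2,6,22,90,394,1806,8558,\dots$, hence $\fraka_n:2,4,12,44,180,788,3612,17116,\dots$, which is the list asserted in the corollary.

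For part~(b), the formula for $I(z)$ in Eq.~(\mref{eq:generation1}) reads
$$
I(z)=\frac{1-z-\sqrt{1-6z+z^{2}}}{2}=z\cdot\frac{1-z-\sqrt{1-6z+z^{2}}}{2z}=zS(z),
$$
so that $\sum_{n\geq 0}i_nz^{n}=\sum_{n\geq 0}s_nz^{n+1}$. Comparing coefficients of $z^{n}$ on both sides gives $i_0=0$ and $i_{n+1}=s_n$ for all $n\geq 0$, and the list $i_n:0,1,2,6,22,90,394,1806,\dots$ follows from the expansion above.

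The argument is entirely formal, so there is no genuine obstacle; the only points requiring care are recalling the generating function Eq.~(\mref{eq:lsn}) of the large Schr\"oder numbers and checking that the branch of $\sqrt{1-6z+z^{2}}$ used there matches the one fixed implicitly throughout Section~\mref{sec:generating}, which it does because both are determined by the prescribed low-degree coefficients. If one prefers an argument not quoting Eq.~(\mref{eq:lsn}), one can instead use that $y=S(z)$ is the unique power series with $y(0)=1$ satisfying $zy^{2}-(1-z)y+1=0$ (equivalently $S=1+zS+zS^{2}$), and verify directly from Eq.~(\mref{eq:generation1}) that both $\tfrac12\frakA(z)$ and $\tfrac1z I(z)$ satisfy this same quadratic with the same constant term, which again forces $\frakA(z)=2S(z)$ and $I(z)=zS(z)$.
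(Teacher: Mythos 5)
Your proof is correct and follows essentially the same route as the paper: the paper likewise proves both identities by comparing the generating functions of Theorem~\mref{thm:generation1} with the generating function $S(z)=\frac{1-z-\sqrt{z^{2}-6z+1}}{2z}$ of the large Schr\"oder numbers. Your additional remarks on the branch of the square root and the alternative verification via the quadratic $zy^{2}-(1-z)y+1=0$ are fine but not needed beyond what the paper records.
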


\begin{proof}
Both results are proved by comparing the corresponding generating functions with the generating function $$S(z):=\sum_{n=0}^\infty s_nz^n=\displaystyle \frac{1-z - \sqrt{z^{2} -6z+1}}{2z}$$
of large Schr\"oder sequence (A006318 in the On-line Encyclopedia of Integer Sequences~\mcite{SN}).
\end{proof}

On the other hand, the number $d_{n}$ of bracketed decomposable averaging words of degree $n$ ($n \geq 0$) is a sequence which starts with
$$
0,0,1,5, 23, 107, 509,2473,\cdots.
$$
This sequences is new and can not be found in~\mcite{SN}.

\subsection{Averaging words and large Schr\"oder numbers}
The sequence $s_n, n\geq 0,$ of large Schr\"oder numbers (A006318 in \cite{SN}) is an important sequence of integers with numerous interesting properties and interpretations. For example, $s_n$ counts the number of Schr\"oder paths of semilength $n$, namely lattice paths on the plane from $(0, 0)$ to
$(2n, 0)$ that do not go below the $x$-axis and consist of up steps $U = (1, 1)$,
down steps $D = (1,-1)$ and horizontal steps $H = (2, 0)$. See~\mcite{SN} for more details.

Corollary~\mref{co:seq} gives two more interpretations of large Schr\"oder numbers in terms of averaging words with a single idempotent generators and idempotent operator.
Motivated by this, we next give another interpretation of the sequence of large Schr\"oder numbers in terms of decorated trees.

\begin{defn}
{\rm
\begin{enumerate}
\item
Let $\mathfrak{T}$ denote the set of planar reduced rooted trees together with the trivial tree $\idx$. Let $\omega$ and $\iota$ be symbols. By a {\bf $(\omega,\iota)$-decorated tree} we mean a tree $t$ in $\mathfrak{T}$ together with a decoration on the vertices of $t$ by $\omega$ and a decoration on the leaves of $t$ by either $\omega$ or $\iota$.
Let $\mathcal{D}(t)$ denote the set of $(\omega,\iota)$-decorated trees from $t$ and denote
\begin{equation*}
\mathcal{D}(\mathfrak{T}):=\coprod_{ t \in \mathfrak{T}} \mathcal{D}(t).
\end{equation*}
\item
Let $t$ be in $\mathfrak{T}$ whose number of leaves is greater than $1$. Then there exists an integer $m$ such that $t$ can be written uniquely as the {\bf grafting} $\bigvee_{i=1}^{m} t_{i}:=t_1\vee \cdots \vee t_m$ of trees $t_{1},t_{2}, \cdots t_{m}$. The trees $t_1,\cdots,t_m$ are called the {\bf branches} of $t$.
\item
Let $\tau\in \mathcal{D}(t)$ where $t$ is in $\mathfrak{T}$ with grafting $t=t_1\vee \cdots \vee t_m$. Then there are $\tau_i\in \mathcal{D}(t_i), 1\leq i\leq m$, such that $\tau$ is the grafting $\omega(\bigvee_{i=1}^m \tau_i)=\omega(\tau_1\vee \cdots\vee\tau_m)$ of $\tau_1,\cdots,\tau_m$
with the new root decorated by $\omega$.
\end{enumerate}}
\end{defn}
Since the rooted trees we are considering are reduced, we have $m\geq 2$ in any grafting of trees.

Now, we define a special subset of $\mathcal{D}(\mathfrak{T})$.

\begin{defn}
\begin{enumerate}
\item
A $(\omega,\iota)$-decorated tree $\tau\in \mathcal{D}(\mathfrak{T})$ is called a {\bf Schr\"oder tree} if either $\tau$ is the trivial tree decorated by $\omega$, or $\tau$ is a tree with more than one leaves and satisfies the following conditions: For each vertex $v$ of $\tau$, let $\tau_v$ be the subtree of $\tau$ with root $v$. Then
\begin{enumerate}
\item the leftmost branch of $\tau_v$ is a leaf decorated by $\iota$;
\item the branches of $\tau_v$ are alternatively a leaf decorated by $\iota$ and a subtree that is not a leaf decorated by $\iota$ (the latter means that the subtree is either not a leaf or a leaf decorated by $\omega$).  \end{enumerate}
To put it in another way, let $\tau_v$ be of the form $\omega(\tau_{v,1}\vee \cdots \tau_{v,k})$, then each $\tau_{v,i}$ for $i$ odd is a leaf decorated by $\iota$ and
each $\tau_{v,i}$ for $i$ even is either not a leaf or is a leaf decorated by $\omega$.
\item
Let $\Sh$ denote the set of Schr\"oder trees and let $\Sh_n$ denote the subset of $\Sh$ consisting of those Schr\"oder trees whose n vertices or leaves are decorated by~$\omega$.
\end{enumerate}
\end{defn}

\begin{theorem}
\begin{enumerate}
\item
The sequence $\{|\Sh_{n}|\}_{n \geq 1}$ counting Schr\"oder trees is the sequence $\{i_n\}_{n\geq 1}$ counting indecomposable averaging words: $|\Sh_n|=i_n, n\geq 1$.
\mlabel{it:count1}
\item
The sequence $\{|\Sh_{n}|\}_{n \geq 1}$ coincides the sequence $\{s_n\}_{n\geq 0}$ of large Schr\"oder numbers. In other words,
$$ |\Sh_{n+1}|=i_{n+1}=s_n, n\geq 0.
$$
\mlabel{it:count2}
\item
The sequence $\{s_n\}_{n\geq 0}$ of large Schr\"oder numbers satisfies the following recursion:
\begin{eqnarray}
s_0&=& 1,\notag\\
s_n&=&2\sum_{j=1}^{n} \sum_{(p_{1},\cdots,p_{j})\in G(n,j)} s_{p_{1}-1}\cdots s_{p_{j}-1},
\mlabel{eq:indw}
\end{eqnarray}
where $G(n,j)$ is the set of compositions of the integer $n$ into $j$ positive integer parts.
\mlabel{it:count3}
\end{enumerate}
\mlabel{thm:count}
\end{theorem}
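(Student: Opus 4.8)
The plan is to prove part~(\ref{it:count1}) by an explicit bijection between indecomposable averaging words and Schr\"oder trees that matches degree with the number of $\omega$-decorations, and then to deduce parts~(\ref{it:count2}) and~(\ref{it:count3}) from part~(\ref{it:count1}) together with Corollary~\ref{co:seq} and the word factorizations already recorded in Section~\ref{sec:generating}.

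For part~(\ref{it:count1}), the key structural input is that, for $n\geq 2$, every word in $I(n)$ is uniquely $\lc u\rc$ with $u$ admissible of degree $n-1$, that is $u=xv$ or $u=xvx$ with $v\in B(n-1)$, and, by iterating Eq.~(\ref{rule5}), $v$ factors uniquely as $v=v_1xv_2x\cdots xv_p$ with $v_j\in I(n_j)$, $n_j\geq 1$, $p\geq 1$ and $n_1+\cdots+n_p=n-1$. I would define $\phi\colon\bigsqcup_{n\geq 1}I(n)\to\Sh$ recursively: $\phi(\lc x\rc)$ is the trivial tree $\idx$ decorated by $\omega$; and for $n\geq 2$, with $u$ written as above, $\phi(\lc u\rc)$ is obtained by grafting, in this order, a leaf decorated by $\iota$, then $\phi(v_1)$, then an $\iota$-leaf, then $\phi(v_2),\ldots$, then an $\iota$-leaf, then $\phi(v_p)$, followed by one further $\iota$-leaf exactly when $u$ ends in $x$, all under a new root decorated by $\omega$. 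An induction on $n$ shows $\phi(\lc u\rc)\in\Sh_n$: there are at least two branches, the odd-position branches are the $\iota$-leaves, the even-position branches are the $\phi(v_j)$, which by induction lie in $\Sh$ and are ``not a leaf or a leaf decorated by $\omega$'' (the trivial $\omega$-tree when $n_j=1$, a genuine tree when $n_j\geq 2$), and the number of $\omega$'s is $1+\sum_j n_j=n$. The inverse is forced: a nontrivial Schr\"oder tree is $\omega(\tau_1\vee\cdots\vee\tau_k)$ with $\tau_1,\tau_3,\dots$ necessarily $\iota$-leaves and $\tau_2,\tau_4,\dots$ Schr\"oder trees that are not $\iota$-leaves, so recursively $v_j:=\psi(\tau_{2j})$ are indecomposable words and one reassembles $u=xv_1xv_2\cdots xv_p$ with a trailing $x$ iff $k$ is odd, setting $\psi(\tau):=\lc u\rc$ (and $\psi$ sends the trivial $\omega$-tree to $\lc x\rc$). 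A routine induction on degree gives $\psi\circ\phi=\mathrm{id}$ and $\phi\circ\psi=\mathrm{id}$, hence $|\Sh_n|=i_n$ for $n\geq 1$; alternatively, one may skip writing $\psi$ down and instead check directly that $|\Sh_n|$ and $i_n$ obey the same recursion, namely the one in part~(\ref{it:count3}) below.

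Part~(\ref{it:count2}) is then immediate from part~(\ref{it:count1}) and the second assertion $i_{n+1}=s_n$ ($n\geq 0$) of Corollary~\ref{co:seq}. For part~(\ref{it:count3}), the factorization of $v$ used above (equivalently Eq.~(\ref{bracket2}) at $t=1$) gives $b_n=\sum_{p\geq 1}\sum_{n_1+\cdots+n_p=n}i_{n_1}\cdots i_{n_p}$ for $n\geq 1$, while Eq.~(\ref{rule4}) together with $C(n)=xB(n)\sqcup B(n)x\sqcup xB(n)x$ and $C^{-}(n)=B(n)x$ (equivalently Eq.~(\ref{eq:formula4}) at $t=1$) gives $i_{n+1}=2b_n$ for $n\geq 1$ and $i_1=1$. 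Substituting $s_m=i_{m+1}$ from part~(\ref{it:count2}) yields $s_0=1$ and, for $n\geq 1$,
\[
s_n=i_{n+1}=2b_n=2\sum_{p\geq 1}\ \sum_{n_1+\cdots+n_p=n} i_{n_1}\cdots i_{n_p}=2\sum_{j=1}^{n}\ \sum_{(p_1,\dots,p_j)\in G(n,j)} s_{p_1-1}\cdots s_{p_j-1},
\]
which is Eq.~(\ref{eq:indw}); the $p$-sum is effectively finite since each $n_j\geq 1$.

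I expect the main obstacle to lie in the bookkeeping for part~(\ref{it:count1}): one must verify that the Schr\"oder-tree condition (leftmost branch an $\iota$-leaf, branches alternating $\iota$-leaf / non-$\iota$-leaf subtree) is precisely the tree shadow of the averaging-word constraints that an admissible word begins with $x$ and has no two adjacent $x$'s and no two adjacent brackets, and in particular that the base case $\lc x\rc\leftrightarrow\idx$ decorated by $\omega$ (rather than a spurious one-branch tree, which would fail to be reduced) is threaded consistently through both $\phi$ and $\psi$. Once this matching is pinned down, the two inductions in part~(\ref{it:count1}) and the identities in parts~(\ref{it:count2}) and~(\ref{it:count3}) are routine.
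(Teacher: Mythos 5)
Your proposal is correct and essentially coincides with the paper's argument: both rest on the same decomposition of an indecomposable word of degree $n$ as $\lc xv_1xv_2\cdots xv_p\rc$ or $\lc xv_1\cdots xv_px\rc$ with $v_j$ indecomposable, matched against the alternating form $\omega(\iota\vee\tau_2\vee\iota\vee\cdots)$ of a Schr\"oder tree; the paper's proof of part (a) matches the resulting recursions and initial values, and your explicit bijection is exactly the map recorded in the Remark immediately following the theorem. Your derivation of part (c) via $i_{n+1}=2b_n$ and Eq.~(\ref{bracket2}) at $t=1$ is just the word-side restatement of the paper's tree recursion Eq.~(\ref{eq:indt}), so nothing essential differs.
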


We illustrate the theorem by listing the first three terms of the sequences $\mathrm{I}(n)$ and $\Sh_{n}, n\geq 1$.
\begin{enumerate}
\item[{\bf 1.}]
\begin{table}[hbtp]\begin{center}\begin{tabular}{|c|c|c|}
\hline $\mathrm{I}(1)$ & $\lc  x \rc$&1\\ \hline
$\mathrm{I}(2)$ & $ \lc x \lc x \rc \rc, \lc x \lc x \rc x \rc$& 2 \\ \hline
$\mathrm{I}(3)$&$\lc x  \lc x \lc x \rc \rc \rc, \lc x  \lc x \lc x \rc \rc x \rc, \lc x \lc x \lc x \rc x \rc \rc, \lc x \lc x \lc x \rc x \rc x\rc, \lc x \lc x \rc x \lc x \rc \rc, \lc x \lc x \rc x \lc x \rc x\rc$& 6 \\ \hline
\end{tabular}
\end{center}\label{exampleword}
\end{table}

\item[{\bf 2.}]
$\Sh_{1}:$  $\begin{picture}(6,6)(-3,-3)
\put(0,0){\line(0,1){6}}
\put(0,0){\line(0,-1){6}}
\put(-2.5,7){$\omega$}
\end{picture}$;
\vspace{0.5cm}

\noindent
$\Sh_{2}:$ \qquad  $\begin{picture}(5,5)(0,0)
\line(-1,1){10}
\put(-10,10){$\iota$}
\put(10,10){$\omega$}
\put(-2.5,-2.5){$\bullet$}
\line(0,-1){10}\line(1,1){10}
\end{picture}$ \quad,  \qquad
$\begin{picture}(10,10)(0,0)
\put(-5,10){\line(0,-1){23}$\omega$}
\put(-8,-5){$\bullet$}
\put(-6,-4){\line(1,1){13}}
\put(-5,-4){\line(-1,1){13}}
\put(7,10){$\iota$}
\put(-18,10){$\iota$}
\end{picture}$;
\vspace{0.7cm}

\noindent
$\Sh_{3}:$ \qquad $\begin{picture}(10,10)(0,0)
\put(-5,0){\line(0,-1){15}}
\put(-8,-5){$\bullet$}
\put(-6,-4){\line(1,1){20}}
\put(1,2){$\bullet$}
\put(-5,-4){\line(-1,1){13}}
\put(-18,10){$\iota$}
\put(4,4){\line(-1,1){10}}
\put(15,15){$\omega$}
\put(-5,13){$\iota$}
\end{picture}$ , \quad \qquad $\begin{picture}(10,10)(0,0)
\put(-5,5){\line(0,-1){20}}
\put(-8,2){$\bullet$}
\put(-8,-7){$\bullet$}
\put(-6,-5){\line(-1,1){15}}
\put(-22,10){$\iota$}
\put(-5,-6){\line(1,1){15}}
\put(10,10){$\iota$}
\put(-6,6){\line(-1,1){10}}
\put(-5,6){\line(1,1){10}}
\put(5,16){$\omega$}
\put(-18,16){$\iota$}
\end{picture}$ , \quad \qquad $\begin{picture}(10,10)(0,0)
\put(-5,0){\line(0,-1){15}}
\put(-8,-5){$\bullet$}
\put(-6,-4){\line(1,1){17}}
\put(1,2){$\bullet$}
\put(-5,-4){\line(-1,1){13}}
\put(-18,10){$\iota$}
\put(4,4){\line(-1,1){10}}
\put(13,13){$\iota$}
\put(-5,13){$\iota$}
\put(4,4){\line(0,1){10}}
\put(1,15){$\omega$}
\end{picture}$, \quad \qquad $\begin{picture}(10,10)(0,0)
\put(-5,20){\line(0,-1){35}}
\put(-8,2){$\bullet$}
\put(-8,-7){$\bullet$}
\put(-6,-5){\line(-1,1){15}}
\put(-22,10){$\iota$}
\put(-5,-6){\line(1,1){15}}
\put(10,10){$\iota$}
\put(-6,6){\line(-1,1){10}}
\put(-5,6){\line(1,1){10}}
\put(5,16){$\iota$}
\put(-18,16){$\iota$}
\put(-7,20){$\omega$}
\end{picture}$, \quad \qquad $\begin{picture}(10,10)(0,0)
\put(-5,0){\line(0,-1){13}}
\put(-8,-2){$\bullet$}
\put(-5,0){\line(-2,1){18}}
\put(-5,0){\line(2,1){18}}
\put(-5,0){\line(-1,2){7}}
\put(-5,0){\line(1,2){7}}
\put(-24,9){$\iota$}
\put(-15,14){$\omega$}
\put(2,13){$\iota$}
\put(13,9){$\omega$}
\end{picture}$, \quad \qquad $\begin{picture}(10,10)(0,0)
\put(-5,15){\line(0,-1){30}}
\put(-8,-2){$\bullet$}
\put(-5,0){\line(-2,1){18}}
\put(-5,0){\line(2,1){18}}
\put(-5,0){\line(-1,2){7}}
\put(-5,0){\line(1,2){7}}
\put(-24,9){$\iota$}
\put(-19,14){$\omega$}
\put(2,13){$\omega$}
\put(14,9){$\iota$}
\put(-5,16){$\iota$}
\end{picture}$ .
\end{enumerate}
\bigskip

\begin{proof}
(\mref{it:count1}) We just need to prove that the sequence $\{|\Sh_n|\}_{n\geq 1}$ and  $\{i_n\}_{n\geq 1}$ satisfy the same recursion relation and the same initial condition.

Let $\mathrm{I}(n)$ denote the set of bracketed indecomposable averaging words of
degree $n$ and $\mathrm{I}$ by the set of all bracketed indecomposable averaging words.
\medskip

First we have $\mathrm{I}(1) = \{ \lc x \rc  \}$ and $\Sh_{1} = \{\begin{picture}(6,6)(-3,-3)
\put(0,0){\line(0,1){6}}
\put(0,0){\line(0,-1){6}}
\put(-2.5,7){$\omega$}
\end{picture} \}$. Hence $|\Sh_1|=i_1=1$.

Next, let $n\geq 2$. For any word $W = \lc w_{1}w_{2}\cdots w_{m}\rc \in \mathrm{I}(n)$, we have $w_{i}=x$ for $i$ odd and $w_{i}\in \mathrm{I}$ for $i$ even. Then there exist $p_{i}, 1\leq i\leq k:=\lc m/2\rc$ such that $w_{2i}$ is in $\mathrm{I}(p_{i})$. Thus we have $p_{1}+\cdots+p_{k} = n-1$ and so $(p_{1},\cdots,p_{k})\in G(n-1,k)$. Note that there are two $W$'s that give the same $(p_1,\cdots,p_k)$: one is $\lc w_1\cdots w_{2k}\rc$, the another is $\lc w_1\cdots w_{2k} x\rc$.

Conversely, let $(p_{1}, \cdots, p_{k}) \in G(n-1,k)$ and take $w_{2} \in I(p_{1}), \cdots, w_{2k} \in I(p_{k})$ and $w_{1} = \cdots = w_{2k-1} = w_{2k+1}=x$. Then $\lc w_{1} \cdots w_{2k} \rc$ and $\lc w_{1}\cdots w_{2k+1}\rc$ are in $\mathrm{I}(n)$.

Therefore, we have the following recursive formula for $i_{n}$, $n \geq 2$:
\begin{equation}
i_{n} = 2\left(i_{n-1}+\cdots + \sum_{(p_{1},\cdots,p_{j})\in G(n-1,j)} i_{p_{1}}\cdots i_{p_{j}}+ \cdots + i_{1}^{n-1}\right)=2\sum_{j=0}^{n-1} \sum_{(p_{1},\cdots,p_{j})\in G(n-1,j)} i_{p_{1}}\cdots i_{p_{j}}.
\mlabel{eq:indi}
\end{equation}

On the other hand, if $\tau \in \Sh_{n}$, then there exists $k\geq 0$ such that either $\tau= \omega(\bigvee_{i=1}^{2k}\tau_{i})$ or $\omega(\bigvee_{i=1}^{2k+1}\tau_{i})$. Furthermore, each $\tau_{i}$ with $i$ odd is a leaf decorated by $\iota$ and each $\tau_{2i}, 1\leq i\leq k,$ is in $\Sh_{p_{i}}$ for some integer $p_i\geq 1$. Since $\omega$ does not appear in $\tau_{i}$ for $i$ odd, we have $p_{1}+p_{2} + \cdots + p_{k} = n-1$. That is $(p_{1}, \cdots, p_{k})$ is in $G(n-1,k)$.

Conversely, let $(p_{1}, \cdots, p_{k})$ be in $G(n-1,k)$. Take $\tau_{2i} \in \Sh_{p_{i}}, 1\leq i\leq k$ and take $\tau_{i}$ for odd $i$ to be a leaf decorated by $\iota$. Then the $(\omega,\iota)$-decorated trees $\omega(\bigvee_{i=1}^{2k} \tau_{i})$ and $\omega(\bigvee_{i=1}^{2k+1}\tau_{i})$ are in $\Sh_{n}$.

By the above argument, we obtain the following recursive formula for $|\Sh_{n}|$, $n \geq 2$.
\begin{eqnarray}
|\Sh_{n}|& =& 2\left(|\Sh_{n-1}|+\cdots + \sum_{(p_{1},\cdots,p_{j})\in G(n-1,j)} (|\Sh_{p_{1}}|\cdots|\Sh_{p_{j}}|)+ \cdots + |\Sh_{1}|^{n-1}\right)\mlabel{eq:indt}\\
&=&\sum_{j=0}^{n-1}\sum_{(p_{1},\cdots,p_{j})\in G(n-1,j)} (|\Sh_{p_{1}}|\cdots|\Sh_{p_{j}}|).\notag
\end{eqnarray}

In summary, $i_{n}, n\geq 1$ and $|\Sh_{n}|, n\geq 1$ have the same initial value and the same recursive relation. Therefore $i_{n} = |\Sh_{n}|, n\geq 1$.
\smallskip

\noindent
(\mref{it:count2}) This follows from Item~(\mref{it:count1}) and Corollary~\mref{co:seq}.
\smallskip

\noindent
(\mref{it:count3})
This follows from Item~(\mref{it:count2}) and Eq.~(\mref{eq:indt}).
\end{proof}

\begin{remark}
As pointed out by the referee, the recursive formula in Eq.~(\mref{eq:indw}) also follows from the classical identity
$$
zS(z)^2+(z-1)S(z)+1=0
$$
of the generating function of large Schr\"oder sequence, since the identity can be reformulated as
$$ S(z)=1+\frac{2zS(z)}{1-zS(z)},$$
which, when developed, gives Eq.~(\mref{eq:indw}). The proof we provided reveals better the combinatorics behind the recursion. This will be useful for example in the following remark.
\end{remark}

\begin{remark}
Theorem~\ref{thm:count}.(\ref{it:count1}) suggests that, for each $n\geq 1$,  $\Sh_n$ is in bijection with the set of indecomposable averaging words $\mathrm{I}(n)$ of degree $n$, and hence with the set of admissible averaging words of degree $n$ thanks to Eq.~(\mref{rule4}). We show that this is indeed the case by inductively constructing bijections $\Psi_n:\mathrm{I}(n)\to \Sh_n, n\geq 1$, using the proof of Theorem \ref{thm:count}.(\ref{it:count1}).

\smallskip

First, define $\Psi_1(\lc x \rc) = \begin{picture}(6,6)(-3,-3)
\put(0,0){\line(0,1){6}}
\put(0,0){\line(0,-1){6}}
\put(-2.5,7){$\omega$}
\end{picture} $. Assume that $\Psi_k$ has been defined for $ 1 \leq k \leq n$. As in the proof of Theorem~\mref{thm:count}.(\ref{it:count1}), any word $W\in \mathrm{I}(n+1)$ is of the form $W = \lc w_{1}w_{2}\cdots w_{m}\rc$, where $w_{i}=x$ for $i$ odd and $w_{i}\in \mathrm{I}(p_{2/i})$ for $i$ even, with $1 \leq p_{i/2} \leq n$ and $p_1+\cdots+p_{\lc m/2\rc}=n$. Then define
$$
\Psi_{n+1}(W) = \omega\left(\bigvee_{i=1}^m \Psi'(w_{i})\right) \quad \mbox{where} \quad \Psi'(w_{i}) = \left\{\begin{array}{ll} \begin{picture}(6,6)(-3,-3)
\put(0,0){\line(0,1){6}}
\put(0,0){\line(0,-1){6}}
\put(-2.5,7){$\iota$}
\end{picture}& i~\mbox{is odd},\\
\Psi_{p_{2/i}}(w_{i})& i~\mbox{is even}.
\end{array} \right.
$$
For example, we have
$$
\Psi_2(\lc x \lc x \rc \rc)= \quad \begin{picture}(5,5)(0,0)
\line(-1,1){10}
\put(-10,10){$\iota$}
\put(10,10){$\omega$}
\put(-2.5,-2.5){$\bullet$}
\line(0,-1){10}\line(1,1){10}
\end{picture} \quad , \quad
\Psi_2(\lc x \lc x \rc x \rc) = \qquad \begin{picture}(10,10)(0,0)
\put(-5,10){\line(0,-1){23}$\omega$}
\put(-8,-5){$\bullet$}
\put(-6,-4){\line(1,1){13}}
\put(-5,-4){\line(-1,1){13}}
\put(7,10){$\iota$}
\put(-18,10){$\iota$}
\end{picture} \quad, \quad \Psi_3(\lc x \lc x \lc x \rc x \rc x\rc) = \qquad \begin{picture}(10,10)(0,0)
\put(-5,20){\line(0,-1){35}}
\put(-8,2){$\bullet$}
\put(-8,-7){$\bullet$}
\put(-6,-5){\line(-1,1){15}}
\put(-22,10){$\iota$}
\put(-5,-6){\line(1,1){15}}
\put(10,10){$\iota$}
\put(-6,6){\line(-1,1){10}}
\put(-5,6){\line(1,1){10}}
\put(5,16){$\iota$}
\put(-18,16){$\iota$}
\put(-7,20){$\omega$}
\end{picture}.
$$
\end{remark}

\section{Tree representation and operad of averaging algebras}
\label{sec:tree}

In this section, we identify the set $\AW(\{x\})$ of averaging words on a singleton $\{x\}$ with a class of unreduced binary trees. This identification gives a combinatorial description of $\AW(\{x\})$ and hence the free averaging algebra on $\{x\}$, without the idempotency assumption in the previous section. This identification also allows us to construct the operad of averaging algebras.

\subsection{Unreduced binary trees and averaging words on a singleton}
Recall that an unreduced binary tree is a tree in which each vertex has either one input or two inputs. By convention the trivial tree is an unreduced binary tree.
We let $\calt$ denote the set of unreduced binary trees.
A vertex $v$ of $\tau\in \calt$ is called a {\bf uni-vertex} (resp. {\bf bi-vertex}) of $\tau$ if $v$ is a vertex with one input (resp. two inputs). If $v$ is a bi-vertex of $\tau$, then $\tau$ has a left subtree and a right subtree associated to $v$, denoted by $\tau_{v,l}$ and $\tau_{v,r}$ respectively. For any $\tau \in \calt$, denote the number of its leaves by $\lef_{\tau}$.

Denote the binary tree with two leaves by $t_{\mu} := \assop$, the tree with one leaf and one uni-vertex by $t_{P} := \operator$, the trivial tree by $\mathrm{id} := \idx$. We use $t_{\mu} \circ (\tau_{l} \ot \tau_{r})$ to denote the tree obtained by grafting a tree $\tau_{l}$ on the left leaf of $t_{\mu}$ and a tree $\tau_{r}$ on the right leaf of $t_{\mu}$; $t_{P} \circ \tau$ to denote the tree obtained by grafting a tree $\tau$ on the only leaf of $t_{P}$. A {\bf ladder} is a tree of the form $t_{P}^{s}$, $s \geq 1$. For example, we have
$$
t_{\mu} \circ (\mathrm{id} \ot t_{P}) = \begin{picture}(10,10)(-5,-5)
\put(0,0){\line(1,1){12}}
\put(0,0){\line(-1,1){10}}
\line(0,-1){10}
\put(3,3){$\bullet$}
\put(-2.5,-2.5){$\bullet$}
\end{picture}, \quad    t_{P} \circ t_{\mu} = \begin{picture}(10,10)(-5,-5)
\put(0,0){\line(1,1){10}}
\put(0,0){\line(-1,1){10}}
\line(0,-1){13}
\put(-2.5,-2.5){$\bullet$}
\put(-2.5,-10){$\bullet$},
\end{picture}, \quad t_{P}^{2} = \begin{picture}(10,10)(-5,-5)
\line(0,-1){13}
\put(0,0){\line(0,1){8}}
\put(-2.5,-2.5){$\bullet$}
\put(-2.5,-10){$\bullet$},
\end{picture}.
$$

With these notations, any unreduced binary tree $\tau$ with $\lef_{\tau}=1$ (resp. $\lef_{\tau} \geq 2$) can be uniquely expressed as $\tau=t_P^s$ (resp. $\tau=t_P^s\circ t_\mu \circ (\tau_{\ell} \ot \tau_{r})$) with $s\geq 0$ and $\tau_\ell, \tau_r$ unreduced binary trees. We use the convention that $t_{P}^{0} = \id$. The number $\pb(\tau):=s$ is called the {\bf bracketed power} of $\tau$.
An unreduced binary tree $\tau$ is called a {\bf bracketed tree} (resp. {\bf non-bracketed tree}) if $\pb(\tau)>0$ (resp. $\pb(\tau)=0$). For example, $t_{P} \circ t_{\mu}$ and $t_{P}^{2}$ are bracketed trees, while $t_{\mu} \circ (\id \ot t_{P})$ is not. The trivial tree is non-bracketed. An unreduced binary tree with $\lef_{\tau} \geq 2$ is non-bracketed if and only if its root is a bi-vertex.

We now define some special subsets of unreduced binary trees.

An unreduced binary tree $\tau$ is called a {\bf \lft} if
\begin{enumerate}
\item
for each bi-vertex $v$ of $\tau$, its right subtree $\tau_{v,r}$ is a leaf;
\mlabel{it:lfta}
\item
among the tree $\tau$ and the left subtrees of all the bi-vertices of $\tau$, at most one is bracketed.
\mlabel{it:lftb}
\end{enumerate}

For example, in the list of trees
\smallskip
$$
\tau_{1} = \begin{picture}(10,10)(-5,-5)
\put(0,0){\line(1,1){10}}
\put(0,0){\line(-1,1){10}}
\line(0,-1){13}
\put(-2.5,-2.5){$\bullet$}
\put(-2.5,-10){$\bullet$},
\end{picture},  \qquad \tau_{2} = \begin{picture}(10,10)(-5,-5)
\put(0,0){\line(1,1){10}}
\put(0,0){\line(-1,1){10}}
\line(0,-1){13}
\put(3,3){$\bullet$}
\put(-2.5,-2.5){$\bullet$}
\put(-2.5,-10){$\bullet$},
\end{picture}, \qquad \tau_{3} =  \begin{picture}(10,10)(-5,-5)
\put(0,0){\line(1,1){10}}
\put(0,0){\line(-1,1){12}}
\line(0,-1){13}
\put(-9,3.8){$\bullet$}
\put(-2.5,-2.5){$\bullet$}
\put(-2.6,-10){$\bullet$}
\end{picture}, \qquad \tau_{4} = \begin{picture}(10,10)(-5,-5)
\put(0,0){\line(1,1){15}}
\put(0,0){\line(-1,1){10}}
\line(0,-1){13}
 \put(2.5,2.5){$\bullet$}
 \put(7.5,7.5){$\bullet$}
\put(-2.5,-2.5){$\bullet$}
\put(-2.6,-10){$\bullet$},
\end{picture}, \qquad \tau_{5} =
\begin{picture}(10,10)(-5,-5)
\put(0,0){\line(1,1){12}}
\put(0,0){\line(-1,1){15}}
\line(0,-1){13}
\put(-9,3){$\bullet$}
\put(-8,4){\line(1,1){12}}
\put(-3,8.5){$\bullet$}
\put(3,3){$\bullet$}
\put(-2.7,-2.5){$\bullet$}
\put(-2.7,-10){$\bullet$}
\end{picture},
$$
$\tau_{1}$ is a \lft, while $\tau_{2}$, $\tau_{3}$, $\tau_{4}$ and $\tau_{5}$ are not.

For any $\tau \in \calt$, let $\Lf(\tau)$ denote the tree after replacing the right subtree of each bi-vertex of $\tau$ by a leaf. Then $\Lf(\tau)$ automatically satisfies condition~(\mref{it:lfta}) for a \lft.
By definition, for the trees $\tau_1,\cdots,\tau_5$ above, we have
$$
\Lf(\tau_{1}) =\tau_{1}, \quad \Lf(\tau_{2}) = \tau_{1},
\quad  \Lf(\tau_{3})= \tau_{3}, \quad \Lf(\tau_{4}) = \tau_{1},\quad \Lf(\tau_{5}) = \begin{picture}(10,10)(-5,-5)
\put(0,0){\line(1,1){12}}
\put(0,0){\line(-1,1){12}}
\line(0,-1){13}
\put(-8,4){\line(1,1){10}}
\put(-2.5,-10){$\bullet$}
\put(-2.5,-2.5){$\bullet$}
\put(-9,3){$\bullet$}
\end{picture}.
$$

Let $\tau = t_{P}^{s} \circ t_{\mu} \circ (\tau_{l} \ot \tau_{r})$ be a bracketed tree with $\lef_{\tau} \geq 2$. $\tau$ is called a {\bf \fat} if $\Lf(\tau)$ is a \lft and $\pb(\tau_{r}) \leq 1$. For example, $\Lf(\tau_{1}), \Lf(\tau_{2}), \Lf(\tau_{4})$ and $\Lf(\tau_{5})$ are \lfts, while $\Lf(\tau_{3})$ is not. It follows from $\pb((\tau_{4})_{r})=2$ that $\tau_{4}$ is not a \fat. Hence, $\tau_{1}, \tau_{2}$ and $\tau_{5}$ are \fats, while $\tau_{3}$ and $\tau_{4}$ are not.

The set $\avt$ of {\bf averaging trees} consists of all unreduced binary trees
satisfying the following conditions:
\begin{enumerate}
\item \label{it:a}Each bracketed subtree of $\tau$ is either a ladder or a \fat;
\item \label{it:b} For each bi-vertex $v$ of $\tau$, $\tau_{v,r}$ is either trivial or bracketed such that either $\tau_{v,l}$ is trivial or has a trivial right subtree.
\end{enumerate}

\begin{remark}
It follows from the definition of an averaging tree that a subtree of an averaging tree is still an averaging tree. The trivial tree and all ladders are averaging trees.
\mlabel{rk:sub}
\end{remark}

For example, consider the following eight unreduced binary trees

$$
\tau_{1} = \begin{picture}(10,10)(-5,-5)
\put(0,0){\line(1,1){10}}
\put(0,0){\line(-1,1){10}}
\line(0,-1){13}
\put(-2.5,-2.5){$\bullet$}
\put(-2.5,-10){$\bullet$},
\end{picture},  \quad \tau_{2} = \begin{picture}(10,10)(-5,-5)
\put(0,0){\line(1,1){10}}
\put(0,0){\line(-1,1){10}}
\line(0,-1){13}
\put(3,3){$\bullet$}
\put(-2.5,-2.5){$\bullet$}
\put(-2.5,-10){$\bullet$},
\end{picture}, \quad \tau_{3} =  \begin{picture}(10,10)(-5,-5)
\put(0,0){\line(1,1){10}}
\put(0,0){\line(-1,1){12}}
\line(0,-1){13}
\put(-9,3.8){$\bullet$}
\put(-2.5,-2.5){$\bullet$}
\put(-2.6,-10){$\bullet$},
\end{picture} \quad \tau_{4} = \begin{picture}(10,10)(-5,-5)
\put(0,0){\line(1,1){15}}
\put(0,0){\line(-1,1){10}}
\line(0,-1){13}
 \put(2.5,2.5){$\bullet$}
 \put(7.5,7.5){$\bullet$}
\put(-2.5,-2.5){$\bullet$}
\put(-2.6,-10){$\bullet$},
\end{picture}, \quad \tau_{5} =
\begin{picture}(10,10)(-5,-5)
\put(0,0){\line(1,1){12}}
\put(0,0){\line(-1,1){15}}
\line(0,-1){13}
\put(-9,3){$\bullet$}
\put(-8,4){\line(1,1){12}}
\put(-3,8.5){$\bullet$}
\put(3,3){$\bullet$}
\put(-2.7,-2.5){$\bullet v$}
\put(-2.7,-10){$\bullet$}
\end{picture}, \quad \tau_{6} =  \begin{picture}(10,10)(-5,-5)
\put(0,0){\line(1,1){10}}
\put(0,0){\line(-1,1){10}}
\line(0,-1){13}
\put(-9.1,3){$\bullet$}
\put(-2.5,-2.5){$\bullet$}
\put(3,3){$\bullet$}
\end{picture},\quad \tau_{7} = \begin{picture}(20,20)(-10,-10)
\put(0,0){\line(1,1){20}}
\put(0,0){\line(-1,1){20}}
\line(0,-1){20}
\put(-2.5,-2.5){$\bullet v$}
\put(3,3){$\bullet$}
\put(8,8){$\bullet$}
\put(11,11){\line(-1,1){10}}
\put(2,14){$\bullet$}
\put(-9,3){$\bullet$}
\put(-8,4){\line(1,1){10}}
\put(-15,9){$\bullet$}
\end{picture},\quad \tau_{8} = \begin{picture}(20,20)(-10,-10)
\put(0,0){\line(1,1){20}}
\put(0,0){\line(-1,1){20}}
\line(0,-1){20}
\put(-2.5,-2.5){$\bullet v$}
\put(3,3){$\bullet$}
\put(8,8){$\bullet$}
\put(11,11){\line(-1,1){10}}
\put(-9,3){$\bullet$}
\put(-8,4){\line(1,1){10}}
\put(-15,9){$\bullet$}
\end{picture}.
$$
By the above rules, $\tau_{1}$ and $\tau_2$ are in $\avt$. $\tau_{3}$ and $\tau_{4}$ are bracketed subtrees of themselves respectively,  but they are either ladders nor \fats, so $\tau_3$ and $\tau_{4}$ are not in $\avt$. $\Lf(\tau_{5})$ is a \fat. ${\tau_{5}}_{v,r}$ is a ladder but the right subtree of ${\tau_{5}}_{v,l}$ is not trivial. Hence $\tau_5$ is not in $\avt$. Each bracketed subtree of $\tau_{6}$ is a ladder, but both ${\tau_{6}}_{v,r}$ and ${\tau_{6}}_{v,l}$ are bracketed. Then ${\tau_{6}}_{v,l}$ is not trivial or has a trivial right subtree. Thus $\tau_6$ is not in $\avt$. Since ${\tau_{7}}_{v,r} = \tau_{3}$ is not a \fat, the bracketed suctree ${\tau_{7}}_{v,r}$ of $\tau_{7}$ is not a \fat. Hence $\tau_7$ is not in $\avt$. There are two bracketed subtrees of $\tau_{8}$, one is a ladder, the other is equal to $\tau_{1}$. Since $\tau_{v,l}$ has a trivial right subtree, $\tau_{8}$ is in $\avt$.

There is a one to one correspondence between $AW(\{x\})$ and $\avt$. Recall that the number of $x$'s appearing in an averaging words $W$ is called its arity and denote it by $\ari_{W}$. We recursively define a map
$$
\varphi: AW(\{x\}) \longrightarrow \calt.
$$
We do this first by letting $\varphi(x) = \mathrm{id}$ and $\varphi(\lc x \rc^{s}) = t_{P}^{s}$. Assume that $\varphi$ has been defined for any word $W$ with $\ari_{W} \leq n$, $n \geq 1$. For any word $W$ with $\ari_{W}=n+1$, consider its standard decomposition $W = w_{1}w_{2}\cdots w_{m} \in AW$, where $w_{i} = x$ or  $\lc V_{i} \rc^{s_{i}}$ with $V_{i} \in \Omega^{+}$. When $m \geq 2$, define
\begin{eqnarray}\label{eq:correspond}
&\varphi(W) = t_{\mu} \circ (\varphi(w_{1}\cdots w_{m-1}) \ot \varphi(w_{m})).&
\end{eqnarray}
When $m=1$, $W$ is the form of $\lc V \rc^{s}$ with $V \in \Omega^{+}$, then $V$ is a word with $\ari_{V}=n+1$ and its breadth is $\geq 2$. Define
$$
\varphi(W) = t_{P}^{s} \circ \varphi(V).
$$

\begin{prop}
The map $\varphi$ is a bijection from $\AW(\{x\})$ to $\avt$.
\mlabel{pp:awat}
\end{prop}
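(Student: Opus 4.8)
The plan is to construct an explicit two-sided inverse $\psi\colon\avt\longrightarrow\AW(\{x\})$ of $\varphi$, after first checking that $\varphi$ does take values in $\avt$. All three of these verifications will be carried out by induction on the number of leaves of a tree, equivalently on the arity $\ari_W$ of an averaging word, since one reads off at once from the recursive definition of $\varphi$ that $\lef_{\varphi(W)}=\ari_W$. First I would note that $\varphi$ is well defined: in \eqref{eq:correspond} both $w_1\cdots w_{m-1}$ and the single block $w_m$ have arity strictly smaller than $\ari_W$, while in the remaining case $W=\lc V\rc^{(s)}$ the recursion passes to $V$, which has the same arity but breadth $\geq 2$ and so is covered by \eqref{eq:correspond}; hence the recursion bottoms out, and uniqueness of the standard decomposition \eqref{eq:decom} makes the definition unambiguous.

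The core step is the claim $\varphi(W)\in\avt$, which I would prove by the same induction while carrying along a dictionary between the syntax of $W$ and the shape of $\varphi(W)$: a word $\lc V\rc^{(s)}$ corresponds to a bracketed tree of bracketed power $s$ (so bracketed words match bracketed trees, and pure powers $\lc x\rc^{(s)}$ match the ladders $t_P^{s}$); a word $\lc V\rc^{(s)}$ with $b(V)\geq 2$ corresponds to $t_P^{s}\circ t_\mu\circ(\tau_l\ot\tau_r)$ with $\tau_l,\tau_r$ non-trivial; and the standard decomposition $W=w_1\cdots w_m$ is mirrored by $\varphi(W)=t_\mu\circ(\varphi(w_1\cdots w_{m-1})\ot\varphi(w_m))$, whose right branch $\varphi(w_m)$ is either $\id$ (if $w_m=x$) or a bracketed tree (if $w_m=\lc V_m\rc^{(s_m)}$). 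Under this dictionary the three subwords forbidden in Definition~\mref{de:ave} translate exactly into the two defining conditions of an averaging tree: forbidding $\lc u\rc\lc v\rc$ amounts to requiring, at each bi-vertex $v$, that $\tau_{v,r}$ be trivial or bracketed with $\tau_{v,l}$ trivial or with trivial right subtree (condition~(\mref{it:b})); forbidding $\lc\lc u\rc v\rc$ amounts to the clause that at most one of a tree and the left subtrees of its bi-vertices is bracketed, that is, to the requirement that $\Lf(\tau)$ be a \lft inside the \fat alternative of condition~(\mref{it:a}); and forbidding $\lc u\lc v\rc^{(2)}\rc$ amounts precisely to the clause $\pb(\tau_r)\leq 1$ in the definition of a \fat. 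I would organize this induction along the recursive descriptions already available (namely $\AW=\Lambda(\AW_0,\widetilde{\AW}^+)$, the identity $\widetilde{\AW}^+=\AW^+\sqcup\lc\widetilde{\AW}^+\rc$, Remark~\mref{rem:stand} and Lemma~\mref{lem:ind}), matched term by term against the recursion $\tau=t_P^{s}\circ t_\mu\circ(\tau_l\ot\tau_r)$.

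With this dictionary in place the inverse is forced. Set $\psi(\id)=x$, $\psi(t_P^{s})=\lc x\rc^{(s)}$ for $s\geq 1$, and for $\tau=t_P^{s}\circ t_\mu\circ(\tau_l\ot\tau_r)$ with $\lef_\tau\geq 2$ put
\[
\psi(\tau)=\begin{cases}
\psi(\tau_l)\,\psi(\tau_r), & s=0,\\
\lc\,\psi\big(t_\mu\circ(\tau_l\ot\tau_r)\big)\,\rc^{(s)}, & s\geq 1,
\end{cases}
\]
where in the first line the juxtaposition is literal concatenation of words, legitimate because $\psi(\tau_r)$ is always indecomposable. Using Remark~\mref{rk:sub} (a subtree of an averaging tree is again one) together with the two averaging-tree conditions read through the dictionary above, I would check that $\psi(\tau)\in\AW(\{x\})$: condition~(\mref{it:a}) keeps the patterns $\lc\lc u\rc v\rc$ and $\lc u\lc v\rc^{(2)}\rc$ from appearing inside any bracket, while condition~(\mref{it:b}) keeps $\lc u\rc\lc v\rc$ from appearing at a concatenation junction. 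Finally $\psi\circ\varphi=\id_{\AW(\{x\})}$ and $\varphi\circ\psi=\id_{\avt}$ follow by a direct induction on arity, respectively on leaves, since the two recursions are term-by-term reverses of one another; this gives the bijectivity of $\varphi$.

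The step I expect to be the real obstacle is the bookkeeping underlying the last two paragraphs: showing that $\psi$ never manufactures one of the three forbidden subwords forces one to track, along every leftmost and rightmost spine, whether the word being built begins or ends with a bare $x$ or with a bracket, that is, to keep control of the head and tail indices $h(\cdot)$ and $t(\cdot)$, and to distinguish cleanly the ladder case from the \fat case for each bracketed subtree. The most delicate point is the pattern $\lc u\lc v\rc^{(2)}\rc$: on the word side it is excluded directly by Definition~\mref{de:ave}, and on the tree side only by the condition $\pb(\tau_r)\leq 1$ for a \fat, and one must verify that this interacts correctly with the ladder prefix $t_P^{s}$ sitting above the relevant bi-vertex.
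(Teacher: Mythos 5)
Your plan is correct and follows essentially the same route as the paper: show $\varphi$ lands in $\avt$ by induction on arity, then construct the inverse (your $\psi$ is exactly the paper's $\Phi$, with $\Phi(t_P^{s}\circ t_\mu\circ(\tau_l\ot\tau_r))=\lc \Phi(\tau_l)\Phi(\tau_r)\rc^{(s)}$), verify it is well defined as an averaging word, and check both compositions by induction on arity, respectively on leaves. Your dictionary between the three forbidden subwords and conditions (\mref{it:a})--(\mref{it:b}) is accurate, and the bookkeeping you flag (tail index of $\psi(\tau_l)$ when $\tau_r$ is bracketed, and $\pb(\tau_r)\leq 1$ versus $\lc u\lc v\rc^{(2)}\rc$) is precisely the case analysis the paper carries out in its Step 2.
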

\begin{proof}
We divide the proof into three steps. First, we show the image of $\varphi$ is a subset of $\avt$. Then we define a map $\Phi: \avt \longrightarrow \mathrm{AW}(\{x\})$ such that $\varphi \Phi = \mathrm{Id}_{\avt}$. Finally, we prove the map $\Phi$ also satisfies $\Phi \varphi = \mathrm{Id}_{AW(\{x\})}$.

\noindent
{\bf Step 1. $\varphi(\AW(\{x\})) \subseteq \avt$:} \quad  We prove it by induction on $\ari_{W}$. When $\ari_{W}=1$, that is $W= x$ or $\lc x \rc^{s}$. Then $\varphi(W)$ is a trivial tree or ladder. Hence, $\varphi(W) \subseteq \avt$.

Inductively assuming that $\varphi(W) \in \avt$ holds for any $W$ with $\ari_{W} \leq n$, $n \geq 1$. Let $W$ be any averaging word with $\ari_{W}=n+1$. Let $w_{1}\cdots w_{m}$ be the standard form of $W$, then we have $w_{i}$ is either $x$ or $\lc w_{i}'\rc$, $1\leq i \leq m$. According to the breadth of $W$, we consider the following two subcases:\quad (i) $m \geq 2$; \quad (ii) $m=1$.
\begin{enumerate}
\item[(i)] If $m \geq 2$, then we have $\tau:= \varphi (w_{1} \cdots w_{m}) = t_{\mu} \circ (\varphi (w_{1}\cdots w_{m-1}) \ot \varphi(w_{m}))$. Denote the root of $\tau$ by $v$. Then $\tau_{v,l} = \varphi (w_{1}\cdots w_{m-1})$, $\tau_{v,r} = \varphi(w_{m})$. By the induction hypothesis, we have $\tau_{v,l}, \tau_{v,r} \in \avt$. Since each bracketed subtree of $\tau$  is a bracketed subtree of either $\tau_{v,l}$ or $\tau_{v,r}$, $\tau$ satisfies (\ref{it:a}) for $\avt$.

    Except for the root $v$ of $\tau$, each bi-vertex of $\tau$ is a bi-vertex of either $\tau_{v,l}$ or $\tau_{v,r}$. In order to show $\tau$ satisfies (\ref{it:b}) for $\avt$, we only need to prove $v$ satisfies (\ref{it:b}) for $\avt$.
\begin{enumerate}
\item[($\alpha$)] If $w_{m}=x$, then $\tau_{v,r}$ is a trivial tree.
\item[($\beta$)] If $w_{m} = \lc w_{m}' \rc$, then $\varphi(w_{m})$ is a bracketed tree and $w_{m-1}=x$. Thus $\tau_{v,l}$ is trivial when $m=2$ and $\tau_{v,l}$ has a trivial right subtree when $m \geq 3$.
\end{enumerate}
We have $\tau$ satisfies (\ref{it:b}) for $\avt$. Therefore, $\tau \in \avt$.
\item[(ii)] If $m=1$, then $W = \lc V \rc^{s}$, where $V\in \Omega^{+}$. Note that $\ari_{W} \geq 2$ and by Remark \ref{rem:stand}. Rewrite $W$ as the form of  $\lc v_{1}\cdots v_{k}\rc^{s}$.  Then $k \geq 2$, $v_{1} =x$, and $v_{k} =x$ or $v_{k} = \lc v_{k}' \rc$, $v_{k}' \in \Omega^{+}$. Let $\tau:= t_{P}^{s} \circ \varphi(V) = \varphi(W)$.

    Except for $\sigma_{i}:=t_{P}^{i} \circ \varphi(V)$, $1 \leq i \leq s$, each bracketed subtree of $\tau$ is a bracketed subtree of $\varphi(V)$. We only need to prove $\sigma_{i}$ is a \fat. Since $w_{1} = x$ and $\pb({\sigma_{i}}_{r}) = \pb(\varphi(v_{k})) \leq 1$, $\sigma_{i}$ is a \fat and $\tau$ satisfies (\ref{it:a}) for $\avt$.

    Since $V=v_{1}\cdots v_{k}$ is an averaging word with $\ari_{V}=n+1$ and $k \geq 2$, (i) implies $\varphi(V) \in \avt$. It follows from each bi-vertex of $\tau$ is a bi-vertex of $\varphi(V)$ that $\tau$ satisfies (\ref{it:b}) for $\avt$. Therefore, $\tau \in \avt$.
\end{enumerate}

\noindent
{\bf Step 2. Define a map $\Phi: \avt \longrightarrow \AW(\{x\})$ such that $\varphi \Phi = \mathrm{Id}_{\avt}$:}

We do this by induction on $\lef_{\tau}$. When $\lef_{\tau}=1$, that is $\tau$ is either trivial or a ladder. Define
$$
\Phi(\mathrm{id}) =x \quad \mbox{and} \quad \Phi(t_{P}^{\ell} \circ \mathrm{id}) = \lfloor x \rfloor^{\ell}.
$$
We immediately have $\varphi \Phi(\id) = \varphi(x) =\id$ and $\varphi \Phi(t_{P}^{\ell} \circ \id) = \varphi(\lc x \rc^{\ell}) = t_{P}^{\ell} \circ \id$.

Assume that $\Phi$ has been defined for averaging trees $\tau$ with $\lef_{\tau} \leq$ $n$, $n \geq 1$ and $\varphi \Phi(\tau) = \tau$. For any averaging tree $\tau$ with $\lef_{\tau} = n+1$. Rewrite $\tau$ as $t_{P}^{s} \circ \tau'$ with $s\geq 0$ and $\tau'$ is non-bracketed. Then we have $\tau = t_{P}^{s} \circ \left(t_{\mu} \circ (\tau_{l}  \ot \tau_{r} ) \right)$ for some $\tau_{l}, \tau_{r}$, where $\tau_{r}$ is either trivial or bracketed. By Remark~\mref{rk:sub}, $\tau_{l}$ and $\tau_{r}$ are still averaging trees. By the induction hypothesis, there exist averaging words $W_{1}$ and $W_{2}$ such that $\Phi(\tau_{l}) = W_{1}$, $\varphi(W_{1}) = \tau_{l}$ and $\Phi(\tau_{r}) = W_{2}$, $\varphi(W_{2}) =\tau_{r}$ respectively. By the definition of $\varphi$ and the property of $\tau_{r}$, we have the breadth of $W_{2}$ is $1$. Define $\Phi(\tau) = \lc W_{1}W_{2}\rc^{s}$. Then we have
\begin{eqnarray*}
\varphi \Phi (\tau) &=& \varphi(\lc W_{1}W_{2}\rc^{s}) = t_{P}^{s} \circ \varphi(W_{1}W_{2})\\
&=& t_{P}^{s} \circ t_{\mu} \circ (\varphi(W_{1}) \ot \varphi(W_{2})) \quad \mbox{(the breadth of $W_{2}$ is $1$)}\\
&=& t_{P}^{s} \circ t_{\mu} \circ (\tau_{l} \ot \tau_{r}) = \tau.
\end{eqnarray*}
It remains to prove $\Phi$ is well-defined, that is we need to show $\lc W_{1} W_{2}\rc^{s} \in \AW(\{x\})$.

In fact, if $s=0$, by (\ref{it:b}) for $\avt$, we have $W_{2} =x$ or $W_{2} = \lc  V \rc$. If $W_{2} =x$, then $W=W_{1}x$ is still an averaging word.   If $W_{2} = \lfloor V \rfloor$, we have $\tau_{r} = t_{P}^{\ell} \circ \tau'_{r}$ is bracketed. First, $W_{1}$ is not the form $\lfloor V' \rfloor$, otherwise $\tau_{l}$ would be bracketed and then doesn't have right subtree. Second, if $W_{1} = w_{11}w_{12} \cdots w_{1t}$, then $w_{1t}$ is not the form $\lfloor V'' \rfloor$ (the tail of $W_{1}$), otherwise the right subtree of the root of $\tau_{l}$ would be bracketed (not trivial) and gives a contradiction. So we have $w_{1t} =x$ and $W=W_{1}W_{2}$ is still an averaging word.

If $s>0$, that is $\tau$ is bracketed. Let $W_{1} = w_{11}w_{12} \cdots w_{1t}$ be the standard decomposition of $W_1$. By (\ref{it:b}) for $\avt$, we have either $W_{2} =x$ or $W_{2} = \lc V \rc^{\ell}$, where $V\in \Omega^{+}$. If $W_{2} = \lc V \rc^{\ell}$ and $\ell \geq 2$, then $\tau_{r}$ is a bracketed tree with $\pb(\tau_{r}) = \ell \geq 2$. Thus $\tau$ is not a \fat. It gives a contradiction with $\tau$ is an averaging tree. So we have $\ell =1$.

Since $\tau$ is a bracketed tree, $\Lf(\tau)$ is a \fat and bracketed, we have $w_{11} =x$, otherwise there would be a bi-vertex of $\tau$ whose left subtree is bracketed. Hence $W_{1}W_{2}$ is an averaging word with the head $x$ and $W_{2}$ is either $x$ or a bracketed word of the form $\lc V \rc$, where $V \in \Omega^{+}$. That is $W_{1}W_{2} \in \Omega^{+}$. Therefore $\lfloor W_{1}W_{2}\rfloor^{s}$ is still an averaging word.

\noindent
{\bf Step 3. $\Phi$ satisfies $\Phi \varphi = \mathrm{Id}_{\AW(\{x\})}$:}

We prove it by induction on $\ari_{W}$. When $\ari_{W}=1$, that is $W =x$ or $\lc x \rc^{s}$. Then we have $\Phi\varphi(x) = \Phi(\mathrm{id}) =x$ and $\Phi \varphi(\lc x \rc^{s}) = \Phi(t_{P}^{s} \circ \mathrm{id}) = \lc x \rc^{s}$.

Assume that $\Phi \varphi = \mathrm{Id}_{\AW(\{x\})}$ holds for any $W$ with $\ari_{W} \leq n$, $n \geq 1$. Let $W$ be any averaging word with $\ari_{W}=n+1$. According to breadth of $W$, we consider the following two subcases:
\begin{enumerate}
\item[(1)] If $m \geq 2$ and $W= w_{1} \cdots w_{m}$ is in standard form. We have
\begin{eqnarray*}
\Phi \varphi (w_{1} \cdots w_{m}) &=& \Phi \big( t_{\mu} \circ (\varphi (w_{1}\cdots w_{m-1}) \ot \varphi(w_{m})) \big)\\
&=& \Phi (\varphi(w_{1}\cdots w_{m-1})) \Phi( \varphi(w_{m})) \quad \mbox{(by the definition of $\Phi$)}\\
&=& w_{1}\cdots w_{m-1} w_{m}. \quad \mbox{(by induction hypothesis)}
\end{eqnarray*}
\item[(2)] If $m=1$, then $W = \lc V \rc^{s}$, where $V\in \Omega^{+}$. Rewrite $W$ as the form of  $\lc v_{1}\cdots v_{k}\rc^{s}$.  Then $k \geq 2$, $v_{1} =x$, and $v_{k} =x$ or $v_{k} = \lc v_{k}' \rc$, $v_{k}' \in \Omega^{+}$. We have
\begin{eqnarray*}
\Phi \varphi (\lc v_{1} \cdots v_{k} \rc^{s}) &=& \Phi \big( t_{P}^{s} \circ \varphi (v_{1}\cdots v_{k-1}v_{k}) \big)\\
&=& \lc \Phi (\varphi(v_{1}\cdots v_{k-1}v_{k}) )\rc^{s} \quad \mbox{(by the definition of $\Phi$)}\\
&=& \lc v_{1} \cdots v_{k} \rc^{s}. \quad \mbox{(by case (1))}
\end{eqnarray*}
\end{enumerate}

Therefore, $\varphi$ is a bijection between $\AW(\{x\})$ and $\avt$.
\end{proof}

Now, we can give the free averaging associative algebra in terms of trees.
\begin{theorem}\mlabel{thm:freeone}
The free averaging associative algebra on one generator is $\oplus_{n \geq 1} \bfk[\avt_{n}]$, where $\avt_{n}$ is the set of averaging trees with $n$ leaves.
The binary operations on $\avt_{p} \times \avt_{q}$ and the averaging operator are given respectively by:
$$
\tau_{p} \ast \tau_{q} = \varphi (\varphi^{-1}(\tau_{p}) \diamond \varphi^{-1} (\tau_{q})), \quad Q(\tau) = \varphi ( P( \varphi^{-1}(\tau) ) ).
$$
\label{thm:avt}
\end{theorem}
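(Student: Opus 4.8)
The proof is a transport of structure. The plan is to move the free averaging algebra $(\bfk\AW(\{x\}),\diamond,P_{\{x\}},j_{\{x\}})$ of Theorem~\mref{thm:free} across the bijection $\varphi$ of Proposition~\mref{pp:awat}. First I would extend $\varphi\colon\AW(\{x\})\to\avt$ to a $\bfk$-linear map $\psi\colon\bfk\AW(\{x\})\to\bfk[\avt]$; since $\varphi$ is a bijection between the two generating sets of these free modules, $\psi$ is an isomorphism of $\bfk$-modules.

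Next I would verify that $\varphi$ respects the natural gradings, i.e. $\lef_{\varphi(W)}=\ari_{W}$ for every $W\in\AW(\{x\})$. This is a routine induction on $\ari_{W}$ using the recursive definition of $\varphi$: when $W=x$ or $W=\lc x\rc^{s}$ one has $\varphi(W)\in\{\id,\,t_{P}^{s}\}$, each with one leaf; in the inductive step, grafting at $t_{\mu}$ adds leaf numbers and arities in the same way, while pre-composing with a power of $t_{P}$ changes neither. Consequently $\varphi$ restricts, for each $n\ge 1$, to a bijection between the averaging words of arity $n$ and $\avt_{n}$, so that under $\avt=\bigsqcup_{n\ge1}\avt_{n}$ we have $\bfk[\avt]=\bigoplus_{n\ge1}\bfk[\avt_{n}]$ and $\psi$ is an isomorphism $\bfk\AW(\{x\})\cong\bigoplus_{n\ge1}\bfk[\avt_{n}]$ of $\bfk$-modules.

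Now equip $\bigoplus_{n\ge1}\bfk[\avt_{n}]$ with the operations obtained by transport along $\psi$, namely $a\ast b:=\psi\bigl(\psi^{-1}(a)\diamond\psi^{-1}(b)\bigr)$ and $Q(a):=\psi\bigl(P_{\{x\}}(\psi^{-1}(a))\bigr)$. Evaluating on basis trees $\tau_{p}\in\avt_{p}$, $\tau_{q}\in\avt_{q}$, and recalling from Section~\mref{sec:freeav} that $\diamond$ sends a pair of averaging words to a single averaging word and $P_{\{x\}}$ sends an averaging word to a single averaging word, these reduce exactly to the two displayed formulas $\tau_{p}\ast\tau_{q}=\varphi(\varphi^{-1}(\tau_{p})\diamond\varphi^{-1}(\tau_{q}))$ and $Q(\tau)=\varphi(P_{\{x\}}(\varphi^{-1}(\tau)))$. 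By construction $\psi$ is then an isomorphism of averaging algebras from $(\bfk\AW(\{x\}),\diamond,P_{\{x\}})$ onto $(\bigoplus_{n\ge1}\bfk[\avt_{n}],\ast,Q)$; in particular, by Theorem~\mref{thm:free}\,(\mref{it:alg}) and (\mref{it:ave}), the latter is an averaging algebra.

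Finally I would identify it as free on one generator. By Theorem~\mref{thm:free}\,(\mref{it:free}), $(\bfk\AW(\{x\}),\diamond,P_{\{x\}},j_{\{x\}})$ is the free averaging algebra on $\{x\}$, and $\psi\circ j_{\{x\}}$ sends the generator $x$ to $\varphi(x)=\id\in\avt_{1}$. Hence, for any averaging algebra $B$ and any $b\in B$, composing the unique averaging homomorphism $g\colon\bfk\AW(\{x\})\to B$ with $g(j_{\{x\}}(x))=b$ with $\psi^{-1}$ gives $g\circ\psi^{-1}$ as the unique averaging homomorphism $\bigoplus_{n\ge1}\bfk[\avt_{n}]\to B$ carrying $\id$ to $b$, which is precisely the universal property of the free averaging algebra on one generator. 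The whole argument is a formal transport of structure, so I do not expect a genuine obstacle; the only points requiring care are the leaf-count bookkeeping $\lef_{\varphi(W)}=\ari_{W}$ that identifies $\bfk\AW(\{x\})$ with $\bigoplus_{n\ge1}\bfk[\avt_{n}]$ and the unwinding of the transported operations into the stated closed forms, both of which follow directly from the recursive definition of $\varphi$ and from Proposition~\mref{pp:awat}.
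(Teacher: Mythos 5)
Your proposal is correct and matches the paper's (implicit) argument: the theorem is stated as an immediate consequence of Theorem~\mref{thm:free} and Proposition~\mref{pp:awat} via transport of structure along $\varphi$, with no further proof given in the paper. Your explicit check that $\lef_{\varphi(W)}=\ari_{W}$, which yields the grading $\bfk[\avt]=\bigoplus_{n\ge1}\bfk[\avt_{n}]$, is the only bookkeeping the paper leaves unstated, and it is done correctly.
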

It is cumbersome to describe the operations $\ast$ and $Q$ directly in terms of averaging trees. So we will not provide it here. On the other hand, both Proposition~\mref{pp:awat} and Theorem~\mref{thm:avt} can be easily generalized to free averaging algebras on any nonempty set $X$. Just replace $\avt$ by trees from $\avt$ with leaves decorated by elements of $X$.

\subsection{The operad of averaging associative algebras}
The operad of the averaging algebra (also called the averaging operad) is given as a quotient of the free operad in~\cite{PBGN}. We will recall its definition and apply the description of free averaging algebra on one generator in terms of unreduced binary trees in the previous subsection to give an explicit construction of the averaging operad.

\begin{defn}\mlabel{defn:av}
{\rm Let $\gensp=\gensp_{2}$ be a graded vector space concentrated at arity 2.
\begin{enumerate}
\item \cite[Section 5.8.5]{LV}
Let $\bvp$ denote the graded spaces concentrated at arity 1 and arity 2 with  $(\bvp)_{1}=\bfk\,\id\oplus \bfk\,P$ and $(\bvp)_{2}=V$, where $P$ is a symbol. Let $\mathcal{T}_{ns}(\bvp)$ be the free operad generated by $\bvp$.
\item \cite{PBGN}
Let $\calp=\mathcal{T}_{ns}(\gensp)/(R_\calp)$ be a binary operad defined by generating operations $\gensp$ and relations $R_\calp$.
Let
$$AV_{\mathcal{P}}:=\{\gop\circ(P\otimes P)-P\circ\gop\circ(P\otimes {\rm id}), \gop\circ(P\otimes P)- P\circ\gop\circ({\rm id}\otimes P)\ |\ \gop\in \gensp\}.$$
Define the {\bf operad of averaging $\calp$-algebras}
by
$$AV(\calp):=\mathcal{T}_{ns}(\bvp)/( {\mathrm R}_\calp,{\mathrm AV}_{\calp}),$$
where $( {\mathrm R}_\calp,{\mathrm AV}_{\calp})$ is the operadic ideal of $\mathcal{T}_{ns}(\bvp)$ generated by ${\mathrm R}_\calp\cup {\mathrm AV}_{\calp}$.
\item
Let $As$ denote the non-symmetric operad of associative algebras. We call $AV(As)$ the {\bf averaging operad}.
\end{enumerate}
}
\end{defn}

The operad $AV(As)$ is a non-symmetric operad and so is completely determined by the free nonunitary averaging associative algebra on one generator. So by Theorem~\mref{thm:avt}, we have

\begin{prop}
Let $\calp_{0} =0$ and $\calp_{n} = \bfk \avt_{n}$. Then
$\calp = (\calp_{0}, \calp_{1},\calp_{2}, \cdots)$ is the averaging operad.
\end{prop}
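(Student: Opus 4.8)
The plan is to deduce the statement as an essentially immediate consequence of the machinery assembled in the preceding subsections, namely Theorem~\mref{thm:avt} together with the fact (recorded in Definition~\mref{defn:av}) that $AV(As)$ is a non-symmetric operad and hence is completely reconstructed from the underlying free averaging associative algebra on one generator. First I would recall the standard fact that for a non-symmetric operad $\calq$, knowing the free $\calq$-algebra on one generator is the same as knowing $\calq$ itself: the arity-$n$ component $\calq_n$ is precisely the multilinear part of the free algebra, and since we work in the non-symmetric setting there are no $S_n$-actions to track. In particular, composition in the operad is obtained by substitution of generators, which corresponds under this identification to the algebra operations. Thus the content of the proposition is exactly that $\oplus_{n\geq 1}\bfk[\avt_n]$, with the operations $\ast$ and $Q$ from Theorem~\mref{thm:avt}, \emph{is} the free averaging associative algebra on one generator, together with a bookkeeping check that the arity grading matches the leaf-count grading.

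The key steps, in order, would be: (1) Invoke Theorem~\mref{thm:avt} to identify the free nonunitary averaging associative algebra on one generator $x$ with $\oplus_{n\geq 1}\bfk[\avt_n]$, where $\avt_n$ is the set of averaging trees with $n$ leaves, the product being $\ast$ and the averaging operator being $Q$. (2) Observe that the arity of an element of the free algebra — the number of occurrences of the generator $x$ — corresponds under the bijection $\varphi$ of Proposition~\mref{pp:awat} to the arity $\ari_W$ of the corresponding averaging word $W$, which in turn equals the number of leaves $\lef_\tau$ of $\varphi(W)$, since $\varphi(x)=\id$ has one leaf and the recursion $\varphi(W)=t_\mu\circ(\varphi(w_1\cdots w_{m-1})\ot\varphi(w_m))$ and $\varphi(\lc V\rc^s)=t_P^s\circ\varphi(V)$ neither creates nor destroys leaves beyond concatenation. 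Hence the multilinear (arity-$n$) part of the free algebra is exactly $\bfk[\avt_n]$. (3) Recall from Definition~\mref{defn:av}(c) that the averaging operad $AV(As)$ is a non-symmetric operad, so by the standard correspondence between non-symmetric operads and their free one-generated algebras (see e.g. the treatment in the reference cited in Definition~\mref{defn:av}), $AV(As)_n$ is canonically identified with the arity-$n$ component of the free averaging associative algebra on one generator. (4) Combine (1)–(3): setting $\calp_0=0$ and $\calp_n=\bfk\avt_n$ for $n\geq 1$, the sequence $\calp=(\calp_0,\calp_1,\calp_2,\dots)$ carries exactly the operad structure of $AV(As)$, with operadic composition induced by $\ast$, $Q$ and $\id$. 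The convention $\calp_0=0$ reflects the nonunitary hypothesis (no arity-$0$ operations), consistent with $\avt$ containing only trees with at least one leaf.

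The main obstacle is not any deep argument but rather making precise the passage from "free algebra on one generator" to "operad" in the non-symmetric setting, and in particular checking that the operadic composition maps $\calp_k\otimes\calp_{n_1}\otimes\cdots\otimes\calp_{n_k}\to\calp_{n_1+\cdots+n_k}$ agree with iterated substitution of averaging trees into the leaves of an averaging tree, as mediated by $\ast$ and $Q$ through $\varphi$. One must verify that substituting averaging words into the generator slots of an averaging word again yields an averaging word (equivalently, that grafting averaging trees at leaves of an averaging tree produces an averaging tree) — but this is compatible with the operad quotient $AV(As)=\mathcal{T}_{ns}(\bvp)/(\mathrm{R}_{As},\mathrm{AV}_{As})$ precisely because $\bfk\AW(\{x\})$ was shown in Theorem~\mref{thm:free} to be the free averaging algebra, so its multilinear components automatically satisfy the universal property defining the operad. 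I would therefore present this step as a short remark that the identification of Theorem~\mref{thm:avt} is as averaging algebras, hence respects all structure maps, so the induced operad is $AV(As)$ by definition and universality. No genuinely hard computation is required; the proposition is a repackaging of Theorem~\mref{thm:avt}.
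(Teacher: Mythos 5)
Your proposal is correct and takes essentially the same route as the paper, which likewise deduces the proposition directly from Theorem~\mref{thm:avt} together with the observation that the non-symmetric operad $AV(As)$ is completely determined by the free nonunitary averaging associative algebra on one generator, with the grading by arity of $x$ matching the leaf count under $\varphi$. One minor caveat: substituting averaging words into the slots of an averaging word (equivalently, grafting averaging trees onto leaves) does not literally stay inside $\AW(\{x\})$ — the paper's explicit description of $\tau\circ_i\sigma$ composes with the reduction map $\mathrm{Red}$ modulo $I_{AV}$ — but since you ultimately appeal to the universal property of the free averaging algebra from Theorem~\mref{thm:free}, this does not affect the validity of your argument.
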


We end the paper by a description of the operad compositions of $\calp$ by making use of bijection $\varphi$ in Proposition~\mref{pp:awat} together with the following fact:
Let $I_{AV}$ be the operated ideal of the free operated algebra $\bfk\frakS(X)$ generated by elements of the form
$$ \lc u\rc \lc v\rc - \lc u\lc v\rc \rc,\quad  \lc \lc u\lc v \rc -\lc u\lc v\rc\rc, \quad u, v\in \bfk\frakS(X).$$
Then the quotient operated algebra $\bfk \frakS(X)/I_{AV}$ is the free averaging algebra on $X$. Thus we have $\bfk\frakS(X)/I_{AV}\cong \cala(X)$ as operated algebras. Let
$$\mathrm{Red}: \bfk\frakS(X)\to \bfk\frakS(X)/I_{AV}\cong \cala(X)$$
be the composition of the quotient map with the above isomorphism.

Now let $\tau\in \calp_m$, $\sigma\in \calp_n$ and $1\leq i\leq m$. To define the composition $\tau \circ_i \sigma$, first replace the $i$-th $x$ of $\varphi^{-1}(\tau)$ (from the left) by $\varphi^{-1}(\sigma)$ and denote the result by $W$.
Then we have $\tau \circ_{i} \sigma = \varphi(\mathrm{Red}(W))$.

\bigskip

\noindent {\bf Acknowledgements}: This work is supported by the National Natural Science Foundation of China (Grant No. 11371178) and the National Science Foundation of US (Grant No. DMS~1001855). The authors thank the anonymous referees for helpful comments.

\end{document}